\newcommand{\R}{\mathbb{R}}
\newcommand{\Z}{\mathbb{Z}}
\newcommand{\N}{\mathbb{N}}
\newcommand{\abs}[1]{\left\vert#1\right\vert}
\newcommand{\id}{\mbox{id}}
\newcommand{\im}{\mbox{im}}
\newcommand{\norm}[1]{\Vert#1\Vert}
\newcommand{\bo}{{\boldsymbol{\omega}}}
\newcommand{\bn}{{\boldsymbol{\nu}}}
\newtheorem{thm}{Theorem}[section]
\newtheorem{cor}[thm]{Corollary}
\newtheorem{lem}[thm]{Lemma}
\newtheorem{defn}[thm]{Definition}
\newtheorem{rem}[thm]{Remark}
\newtheorem{hyp}{Hypothesis}[section]
\begin{document}

\title[Heteroclinic chains involving periodic orbits]{Lin's method for
heteroclinic chains involving periodic orbits} 

\author{J\"urgen Knobloch$^1$ and Thorsten Rie\ss$^2$}
\address{$^1$Faculty of
Mathematics and Natural Sciences, Technische Universit\"at Ilmenau, 
PF~100565, 98684 Ilmenau, Germany\\
$^2$Center for Applied Mathematics, 657 Frank H.T. Rhodes Hall,
Cornell University, Ithaca, NY 14853, U.S.A.}
\ead{\mailto{juergen.knobloch@tu-ilmenau.de},
\mailto{triess@cam.cornell.edu}}

\vspace*{5mm}

\begin{abstract}
  We present an extension of the theory known as Lin's method to heteroclinic
  chains that connect hyperbolic equilibria and hyperbolic periodic orbits.
  Based on the construction of a so-called Lin orbit, that is, a sequence of
  continuous partial orbits that only have jumps in a certain prescribed linear
  subspace, estimates for these jumps are derived.  We use the jump estimates to
  discuss bifurcation equations for homoclinic orbits near
  heteroclinic cycles between an equilibrium and a periodic orbit (EtoP cycles).
\end{abstract}

\ams{37C29, 
     37G25, 
     34C23, 
     34C60} 

\section{Introduction}
\label{sec:intro}

Connecting cycles involving hyperbolic equilibria and hyperbolic periodic orbits
play an important role in many applications, cf. \cite{Krauskopf2008} and
references therein.  The bifurcation analysis in the vicinity of such a
connecting cycle is crucial for the understanding of the system's behavior.  In
this respect, both the theoretical bifurcation analysis and numerical
implementations are of high interest in current research.

Lin's method has proved to be an appropriate tool for discovering recurrent dynamics
near a given cycle. The method dates from \cite{Lin1990}, where heteroclinic
chains consisting of hyperbolic fixed points, all having the same index
(dimension of the unstable manifold), and heteroclinic orbits connecting them
are considered.  The basic idea of Lin's method is to construct discontinuous
orbits with well defined discontinuities (jumps), so-called Lin orbits, near the
original cycle.  By `making these jumps zero' one finally finds real orbits
staying for all time close to the cycle under consideration.  In 1993 Sandstede
\cite{Sandstede1993} gained jump estimates which allow an effective discussion
of the bifurcation equations.  For a survey of the many applications and
several extensions of the method we refer to \cite{Lin2008}.

In this paper we present an extension of the theory of Lin's method to arbitrary
heteroclinic chains connecting hyperbolic equilibria and hyperbolic periodic
orbits.  Related problems have been studied
in~\cite{Champneys,Rademacher2005,Rademacher2008} and~\cite{Riess2008}; for the numerical
implementation of these ideas we refer to~\cite{Krauskopf2008}.

In the presence of periodic orbits the construction of Lin orbits is much more
involved, because the dynamics near the periodic orbit has to be incorporated.
The handling of the flow near the periodic is the main difference between the
approaches in~\cite{Rademacher2005,Rademacher2008} and~\cite{Riess2008}.
Based on the ideas in~\cite{Riess2008}, we construct certain
partial (discontinuous) orbits running between Poincar\'e sections of two
consecutive periodic orbits of the given chain.  Then the dynamics near each
periodic orbit is described by means of the corresponding Poincar\'e map.
Finally, the different orbits are coupled in the Poincar\'e section in each
case.  This approach allows to apply immediately results from Lin's method for
discrete systems \cite{Knobloch2004}.

We consider a family of ODE
\begin{equation}\label{eq:system}
\dot{x} = f(x,\lambda),f\in C^k(\R^n\times\R^m,\R^n), k\ge 3.
\end{equation}
For a particular parameter value, say $\lambda=0$, we assume that the system has
a heteroclinic chain consisting of hyperbolic periodic orbits $\gamma_i$ and
heteroclinic orbits $q_i$ connecting $\gamma_i$ and $\gamma_{i+1}$.  Here we
explicitely admit that the minimal period of either of these periodic orbits may
be zero, meaning that either of these orbits may be an equilibrium.  We want to
note that, for instance, a heteroclinic cycle between an equilibrium and a
periodic orbit can be considered as such a heteroclinic chain. In this case the
chain consists of copies of the cycle under consideration which are stringed
together.

We refer to a segment $\gamma_i\cup q_i\cup\gamma_{i+1}$ of the given chain as a
{\sl short heteroclinic chain segment}. Near $q_i$ we construct a discontinuous
orbit $X_i$ satisfying certain boundary conditions $(B_i^-)$ near $\gamma_i$ and
$(B_{i+1}^+)$ near $\gamma_{i+1}$.  There the discontinuity is a well defined
jump $\Xi_i$ near $q_i(0)$.  Those orbits we call {\sl short Lin orbit
segments}.

It can be shown that arbitrarily many consecutive short Lin orbit segments can
be linked together to a Lin orbit close to the original chain, see
\cite{Riess2008} for chains related to heteroclinic cycles connecting one
equilibrium and one periodic orbit.  In the present paper we confine
ourselves to linking two consecutive short Lin orbit segments $X_l$ and $X_r$
related to $\gamma_{l}\cup q_{l}\cup\gamma$ and $\gamma\cup q_r\cup\gamma_{r}$
to a {\sl long Lin orbit segment} with boundary conditions $(B_{l}^-)$ and
$(B_{r}^+)$.  Apart from the fact that this procedure reveals the basic idea for
linking arbitrarily many consecutive short Lin orbit segments, it is eligible
for consideration in its own right.  So it suffices to consider long Lin orbit
segments for the detection of $1$-homoclinic orbits near a heteroclinic cycle
connecting two periodic orbits. Here, $1$-homoclinic orbits are characterized by
only one large excursion before returning to their
starting point.

If $\gamma$ is an equilibrium, the existence proof of long Lin orbit segments
runs to large extent parallel to `classical constructions' of Lin's method,
see~\cite{Riess2008}. For that reason we consider only the case that $\gamma$ is a
periodic orbit with nonzero minimal period.  Roughly speaking, the orbits $X_l$
and $X_r$ are linked via an orbit $x$ that defines the behavior of the newly
generated orbit along $\gamma$. We construct $x = x(y)$ as a suspension of a certain
orbit $y$ of an appropriate Poincar\'e map. In this process the boundary
conditions $(B_{l}^-)$ and $(B_{r}^+)$ remain untouched.

Finally we give estimates of the jumps $\Xi_l$ and $\Xi_r$, which allow us to
discuss the bifurcation equations $\Xi_l=0$ and $\Xi_r=0$ for detecting actual
orbit segments near the given long orbit segment $\gamma_l\cup q_l\cup\gamma\cup
q_r\cup\gamma_r$.

We apply our results to study homoclinic orbits near a heteroclinic cycle
connecting a hyperbolic equilibrium $E$ and a hyperbolic periodic orbit $P$
(with nonzero minimal period), an {\sl EtoP cycle} for short. Here we only
consider 1-homoclinic orbits to the equilibrium. Those orbits may differ
considerably in their length of stay near $P$. This length correlates to the
number $\nu$ of rotations the homoclinic orbit performs along $P$ or, in the
above notation, it correlates to the length $\nu$ of the orbit $y$.

Indeed, in numerical computations it has been observed that the homoclinic
orbits for different $\nu$ all lie on the same continuation curve. Moreover,
this continuation curve shows a certain snaking behavior and accumulates on a
curve segment related to the existence of the primary EtoP cycle, cf.
figure~\ref{fig:bif_h1b} panel (a).  The addressed snaking behavior of a system
with reinjection was revealed numerically in \cite{Krauskopf2006} and
\cite{Krauskopf2008}.

In particular, we explain two local phenomena appearing in this global snaking
scenario.  First we consider a codimension-one EtoP cycle. Apart from $E$ and
$P$ this cycle consists of a robust heteroclinic orbit $q_l$ connecting $E$ to
$P$, and a codimension-one heteroclinic orbit $q_r$ connecting $P$ to $E$.
Further, the dimensions of the unstable manifold of $P$ and the stable manifold
of $E$ add up to the space dimension.

Let $\lambda$ be the one-dimensional parameter unfolding $q_r$ and, hence,
unfolding the entire cycle.  In that unfolding we describe the accumulation of
homoclinic orbits at the primary heteroclinic cycle.  More precisely, we prove
that there is a sequence $(\lambda_\nu)_{\nu\in\N}$ tending to zero such that for
each $\lambda_\nu$ there is a homoclinic orbit to the equilibrium, while for
$\lambda=0$ the heteroclinic cycle exists. Moreover, with increasing $\nu$ the
corresponding homoclinic orbits stay longer near the periodic orbit, performing
an increasing number of rotations along the periodic orbit.

In a second scenario we assume that $W^u(E)$ and $W^s(P)$ do no longer intersect
transversally but have a quadratic tangency --- still we assume that $q_r$ is of
codimension one as described above. Then the entire EtoP cycle is of codimension
two.  Let the parameter $\lambda_{l/r}$ unfold the orbits $q_{l/r}$, and assume
that the EtoP cycle exists for $\lambda=(\lambda_l,\lambda_r)=0$.  Then, in the
neighborhood of $\lambda=0$, we find a sequence of curves $\kappa_\nu$ in the
$\lambda$-plane for which a homoclinic orbit to the equilibrium exists. As above
with increasing $\nu$ the corresponding homoclinic orbits stay longer and longer
near the periodic orbit, performing an increasing number of rotations along
the periodic orbit.  For each $\nu$ the curve $\kappa_\nu$ has a turning point
$\lambda_\nu$ tending to zero as $\nu$ tends to infinity.  This explains the
curve progression of $h_1$ in a small neighborhood of $c_1\cap t_0$,
cf.~\fref{fig:bif_h1b}.

The paper is organized as follows. In section~\ref{sec:onelink} we develop Lin's
method for short heteroclinic chain segments. The main theorem in this respect
is theorem~\ref{thm:linbvp}, which states the existence of short Lin orbit
segments. Corollary~\ref{cor:linbvp} extends theorem~\ref{thm:linbvp} to
boundary conditions that enforce that the short Lin orbit segment `starts' in
the unstable manifold of $\gamma_i$. In lemma~\ref{lem:jump_2} we give an
estimate of the jump function, which we extend in corollary~\ref{cor:jump_2} to
the situation of corollary~\ref{cor:linbvp}.
In section~\ref{sec:twolinks} we describe the coupling of two consecutive short
Lin orbit segments $X_l$ and $X_r$ to a long Lin orbit segment. Their existence
is stated in theorem~\ref{thm:lhc}, and corollary~\ref{cor:lhc} extends this
assertion on $X_l$ and $X_r$ asymptotic to $\gamma_l$ and $\gamma_r$,
respectively. The corresponding estimates of the jumps are given in
lemma~\ref{lem:jump} and corollary~\ref{cor:jump}.
In section~\ref{sec:applications} we consider 1-homoclinic orbits near EtoP
cycles. The corollaries~\ref{cor:Xi_r_zeros} and \ref{cor:Xi_r_zeros_n=3}
explain the accumulation of $1$-homoclinic orbits, and
corollary~\ref{cor:app_turning} describes the accumulation of vertices of
continuation curves that are obtained by unfolding a ray (in parameter
space) that is covered twice.

\section{Lin's method for short heteroclinic chains}
\label{sec:onelink}

Consider the ODE \eref{eq:system}.  Throughout this section we assume that for
$\lambda = 0$ there is a short heteroclinic chain segment $\gamma^-\cup q\cup
\gamma^+$ with hyperbolic periodic orbits $\gamma^-$ and $\gamma^+$. We
explicitly admit that the minimal period $T^{-/+}$ of either of them may be
zero, meaning that $\gamma^-$ and/or $\gamma^+$ may be hyperbolic equilibria.
Let $W^{s/u}_\lambda(\gamma^\pm)$ denote the stable/unstable manifolds of
$\gamma^\pm$, and we use the short notation $W^{s/u}(\gamma^\pm)$ for the
corresponding manifolds at $\lambda=0$. Further, $T_qW^{s/u}$ denotes the
tangent space of the corresponding manifold at $q$.

We introduce subspaces $W^+$, $W^-$ and $U$ as follows:
\begin{eqnarray*}
  \left( T_{q(0)} W^{u}(\gamma^-) \cap T_{q(0)} W^{s}(\gamma^+) \right) \oplus W^- =
  T_{q(0)} W^{u}(\gamma^-),\\ \left( T_{q(0)} W^{u}(\gamma^-) \cap T_{q(0)}
  W^{s}(\gamma^+)
  \right) \oplus W^+ = T_{q(0)} W^{s}(\gamma^+)\mbox{ and }\\ \mbox{span}\{
  f(q(0),0) \} \oplus U =  T_{q(0)} W^{u}(\gamma^-) \cap T_{q(0)} W^{s}(\gamma^+).
\end{eqnarray*}
In other words, the linear spaces $W^-$ and $W^+$ are contained in the tangent
spaces of the unstable and stable manifolds of $\gamma^-$ and $\gamma^+$ at
$q(0)$, but do \emph{not} contain their common directions, which are collected
in $\mbox{span}\{ f(q(0),0) \} \oplus U$.  

Using a scalar product $\langle\cdot,\cdot\rangle$ in $\R^n$ we define
\begin{equation}\label{eq:def_Z}
Z:=(W^+ \oplus W^- \oplus U \oplus \mbox{span}\{ f(q(0),0)\})^\perp
\end{equation}
and
\begin{equation}
\label{eq:Y}
  Y = W^+ \oplus W^- \oplus U\oplus Z,
\end{equation}
and we denote the projection onto $U$ in accordance with the direct sum
decomposition \eref{eq:Y} by $P^U$.  Note that either of the involved spaces
$W^\pm$, $U$ and $Z$ may be trivial.  Finally, we define a cross-section
$\Sigma$ of $q$ as
\begin{equation*}
  \Sigma := q(0) + Y.
\end{equation*}

Our goal is to construct `discontinuous orbits' near $q$ that satisfy certain
boundary conditions ($B^-$) and ($B^+$) near $\gamma^-$ and $\gamma^+$. Actually
those orbits consist of two orbit segments where the end point of the first and
the starting point of the second one are in $\Sigma$ and their difference is in
$Z$, which is reflected in the boundary condition ($J$).

In a first step we prove the existence of orbit segments that lie in the unstable
and stable manifolds of $\gamma^-$ and $\gamma^+$, respectively, and that
satisfy certain jump conditions in~$\Sigma$.
\begin{thm}
  \label{thm:splitting}
  There is a constant $c>0$ such that for $\abs{\lambda}<c$ and $u\in U$,
  $\abs{u}<c$,
  there is a unique pair of solutions
  $\left(q^-(u,\lambda),q^+(u,\lambda)\right)$ of~\eref{eq:system} that satisfy
  \begin{enumerate}
    \item $q^+(u,\lambda)(0)\in W_\lambda^s(\gamma^+)$, $q^-(u,\lambda)(0)\in
      W_\lambda^u(\gamma^-)$, 
    \item $q^+(u,\lambda)(0),q^-(u,\lambda)(0)\in\Sigma$,
    \item $P^U\left(q^+(u,\lambda)(0) - q(0)\right) =
      P^U\left(q^-(u,\lambda)(0)-q(0)\right) = u$ and
    \item $q^+(u,\lambda)(0) - q^-(u,\lambda)(0)\in Z$.
  \end{enumerate}
\end{thm}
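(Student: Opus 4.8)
The plan is to recast conditions (i)--(iv) as a boundary value problem for the flow near $q$ on the half-lines $\R^+$ and $\R^-$ and to solve it by a contraction argument, in the spirit of Lin's method. (Since $W^{s}_\lambda(\gamma^+)$ and $W^{u}_\lambda(\gamma^-)$ are $C^k$ invariant manifolds depending smoothly on $\lambda$, one could alternatively localize the problem in $\Sigma$ and reduce it to the implicit function theorem in finite dimensions; the Lin-type setup has the advantage of also preparing the jump estimate of lemma~\ref{lem:jump_2}.) Concretely, write $q^\pm(u,\lambda)=q+v^\pm$ and substitute into~\eref{eq:system}, obtaining
\[
  \dot v^\pm = A(t)v^\pm + g(t,v^\pm,\lambda),\qquad A(t):=D_xf(q(t),0),
\]
where $g(t,v,\lambda):=f(q(t)+v,\lambda)-f(q(t),0)-A(t)v$, so that $g(t,0,0)=0$ and $D_vg(t,0,0)=0$. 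Since $\gamma^-$ and $\gamma^+$ are hyperbolic and $q$ is asymptotic to $\gamma^-$ as $t\to-\infty$ and to $\gamma^+$ as $t\to+\infty$, the variational equation $\dot v=A(t)v$ admits an exponential trichotomy on $\R^+$ --- a stable bundle, the one-dimensional neutral bundle $t\mapsto\mbox{span}\{\dot q(t)\}$, and an unstable bundle --- whose stable-plus-neutral fiber along $q$ is $T_{q(t)}W^{s}(\gamma^+)$, and likewise an exponential trichotomy on $\R^-$ whose unstable-plus-neutral fiber along $q$ is $T_{q(t)}W^{u}(\gamma^-)$. The neutral direction has to be handled with care, which is exactly what the cross-section $\Sigma=q(0)+Y$, taken transverse to $f(q(0),0)$, is for.

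Using the variation-of-constants (Lin) integral equations associated with these trichotomies, I would recast the two half-line problems --- together with the requirements $v^\pm(0)\in Y$ (condition (ii)) and that $q+v^+$ remain bounded on $\R^+$, resp.\ $q+v^-$ on $\R^-$, which forces $q+v^+\in W^{s}_\lambda(\gamma^+)$ and $q+v^-\in W^{u}_\lambda(\gamma^-)$ (condition (i)) --- as fixed-point equations whose free data are vectors $a^+\in T_{q(0)}W^{s}(\gamma^+)\cap Y=U\oplus W^+$ and $a^-\in T_{q(0)}W^{u}(\gamma^-)\cap Y=U\oplus W^-$. For $\abs{a^\pm}$ and $\abs{\lambda}$ small the right-hand sides are contractions on suitable spaces of bounded functions, which yields unique small solutions $v^\pm=v^\pm(a^\pm,\lambda)$, smooth in $(a^\pm,\lambda)$ because $f\in C^k$, with $v^\pm(0)=a^\pm+\mathcal{O}(\abs{a^\pm}^2+\abs{\lambda})$. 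Splitting $a^\pm=u^\pm+b^\pm$ with $u^\pm\in U$ and $b^\pm\in W^\pm$, condition (iii), $P^Uv^\pm(0)=u$, reads $u^\pm+\mathcal{O}(\abs{a^\pm}^2+\abs{\lambda})=u$ and is solved uniquely for $u^\pm=u^\pm(b^\pm,u,\lambda)$ by the implicit function theorem, leaving $b^+\in W^+$ and $b^-\in W^-$ free.

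It remains to impose (iv). The $U$-component of $v^+(0)-v^-(0)$ already vanishes by (iii), so $v^+(0)-v^-(0)\in Z$ is equivalent to the vanishing of its $W^+$- and $W^-$-components. Since the $W^+$-component equals $b^++(\mbox{higher-order terms in }b^+,b^-,u,\lambda)$ and the $W^-$-component equals $-b^-+(\mbox{higher-order terms})$, this amounts to a system $b^+=\Phi^+(b^+,b^-,u,\lambda)$, $b^-=\Phi^-(b^+,b^-,u,\lambda)$ with $\Phi^\pm$ of higher order, hence a contraction in $(b^+,b^-)$ for $\abs{u}$ and $\abs{\lambda}$ small. Its unique small solution $(b^+,b^-)=(b^+(u,\lambda),b^-(u,\lambda))$, smooth in $(u,\lambda)$, determines $a^\pm$, hence $v^\pm$, hence the asserted unique pair $\left(q^-(u,\lambda),q^+(u,\lambda)\right)$; the constant $c>0$ is then the smallest of the various smallness bounds used along the way.

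The step I expect to be the main obstacle is the bookkeeping around the neutral flow direction $f(q(t),0)$ on the half-lines: one has to arrange the trichotomies on $\R^\pm$ so that this direction is absorbed into the ranges annihilated by the boundary condition $v^\pm(0)\in Y$, confirm that the resulting spaces of admissible free data are precisely $U\oplus W^+$ and $U\oplus W^-$, and check that the linearizations entering the two implicit-function/contraction steps are boundedly invertible. All three points rest on the transversality relations built into the definitions of $W^\pm$, $U$, $Z$ and $Y$ above; once they are in place, the nonlinear solvability --- and the smooth dependence on $(u,\lambda)$ needed later --- is routine.
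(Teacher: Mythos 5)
Your argument is correct, but it takes a genuinely different (and much heavier) route than the paper. The paper's proof is precisely the finite-dimensional alternative you mention parenthetically and then set aside: it represents the traces $W^s_\lambda(\gamma^+)\cap\Sigma$ and $W^u_\lambda(\gamma^-)\cap\Sigma$ near $q(0)$ directly as graphs over $W^+\oplus U$ and $W^-\oplus U$ (with values in $W^-\oplus Z$ and $W^+\oplus Z$, respectively, and with vanishing first derivative in the $W^\pm$-argument at the origin), observes that conditions (iii) and (iv) force the two $U$-coordinates to coincide with $u$ and reduce the remaining conditions to the system $w^+=\tilde w^+(w^-,u,\lambda)$, $w^-=\tilde w^-(w^+,u,\lambda)$, solves this by the implicit function theorem near the origin, and finally obtains $q^\pm(u,\lambda)$ by solving the corresponding initial value problems. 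Your half-line Lin/trichotomy construction re-derives these graph representations from first principles; the bookkeeping you flag as the main obstacle is indeed resolved by the fact that $Y$ is by construction a complement of $\mbox{span}\{f(q(0),0)\}$ in $\R^n$, so that $T_{q(0)}W^s(\gamma^+)\cap Y=U\oplus W^+$ and $T_{q(0)}W^u(\gamma^-)\cap Y=U\oplus W^-$ are exactly the spaces of admissible free data, and your final contraction in $(b^+,b^-)$ is structurally identical to the paper's system in $(w^+,w^-)$. Two remarks: first, ``bounded on $\R^+$'' must be read as ``staying in a small tubular neighbourhood of $q(\R^+)\cup\gamma^+$'', since for a periodic $\gamma^+$ mere boundedness does not characterize $W^s_\lambda(\gamma^+)$; the contraction on a small ball of bounded functions delivers precisely this stronger property. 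Second, the extra asymptotic control your construction yields is not actually needed at this stage: the paper performs the genuine Lin-type half-line analysis only afterwards, for the perturbations $v^\pm$ of $q^\pm(u,\lambda)$ in the proof of theorem~\ref{thm:linbvp}, so the jump estimate of lemma~\ref{lem:jump_2} does not require theorem~\ref{thm:splitting} to be proved analytically. Both proofs are valid; the paper's is shorter because it outsources all analytic work to the smoothness and transversality of the invariant manifolds.
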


Figures~\ref{fig:splitting} and~\ref{fig:uinsigma} give a graphical interpretation of
theorem~\ref{thm:splitting}, the proof is given in \sref{sec:splitting}.

\begin{figure}[ht]
\begin{center}
\includegraphics[scale=.8]{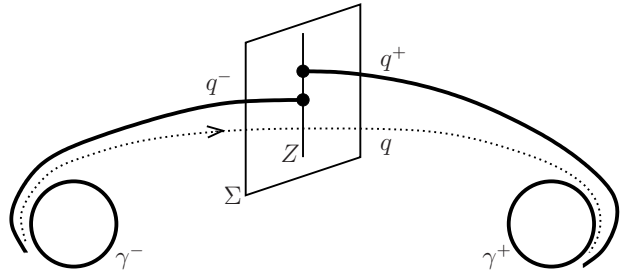} 
\caption{\label{fig:splitting}Sketch of the
situation described in theorem~\ref{thm:splitting}, showing the orbits $q^-
\subset W_\lambda^u(\gamma^-)$ and $q^+ \subset W_\lambda^s(\gamma^+)$.  Within the
cross-section $\Sigma$, the two orbits have a jump in the direction $Z$.  Note that
$\gamma^-$ and $\gamma^+$ are depicted as periodic orbits, but either of them may
be an equilibrium.  The dotted connection $q$ is present for $\lambda = 0$.}
\end{center}
\end{figure}

\begin{figure}[ht]
\begin{center}
\includegraphics[scale=.8]{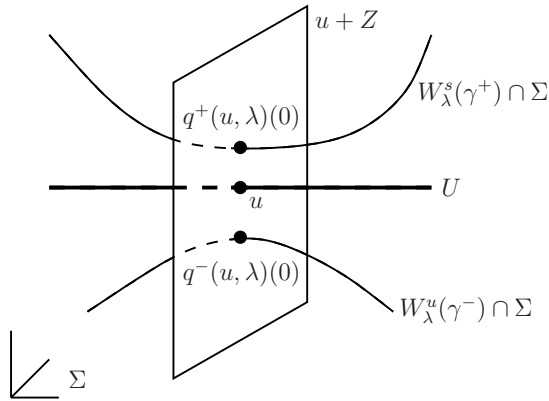}
\caption{\label{fig:uinsigma}The situation
inside $\Sigma$, depicted are the traces of $q^{-/+}(u,\lambda)$,
$W_\lambda^{u}(\gamma^-)$, $W_\lambda^{s}(\gamma^+)$
and the direction $U$ consisting of the common tangent directions (restricted to
$\Sigma$).  The depicted situation corresponds to a `quadratic tangency' of
$W^{u}(\gamma^-)$ and $W^{s}(\gamma^+)$.}
\end{center}
\end{figure}

In the next step, we perturb the solutions $q^\pm$ given by
theorem~\ref{thm:splitting} to construct solutions that stay near the connecting
orbit $q$ and satisfy projection boundary conditions near $\gamma^-$ and
$\gamma^+$.  Moreover, these solutions are also allowed to have a jump in
the direction $Z$ (within $\Sigma$).

To formulate the boundary conditions we define projections
$P^\pm(u,\lambda)(t)$ by
\begin{equation}
  \label{eq:projectionplus}
  \eqalign{
  \im P^+(u,\lambda)(0) &= T_{q^+(u,\lambda)(0)} W_\lambda^s(\gamma^+),\\
  \ker P^+(u,\lambda)(0) &= W^- \oplus Z\mbox{ and }\\
  P^+(u,\lambda)(t) &:= \Phi^+(t,0) (P^+(u,\lambda)(0))\Phi^+(0,t),\, t\in\R^+,
  }
\end{equation}
and analogously
\begin{equation}
  \label{eq:projectionminus}
  \eqalign{
  \im P^-(u,\lambda)(0) &= T_{q^-(u,\lambda)(0)} W_\lambda^u(\gamma^-)\\
  \ker P^-(u,\lambda)(0) &= W^+ \oplus Z\mbox{ and }\\
  P^-(u,\lambda)(t) &:= \Phi^-(t,0) (P^-(u,\lambda)(0))\Phi^-(0,t),\, t\in\R^-.
  }
\end{equation}
Here $\Phi^\pm(\cdot,\cdot)=\Phi^\pm(u,\lambda)(\cdot,\cdot)$ denotes the transition matrix of the variational
equation along $q^\pm(u,\lambda)(\cdot)$.

Throughout we denote by $\abs{\cdot}$ the absolute value of a number or the the
Euclidian norm of an $n$-tuple.  For elements $a =\left( a^-,a^+ \right)\in \R^n\times\R^n$ we define
$\|a\|:=\max\{|a^-|,|a^+|\}$.
\begin{thm}
\label{thm:linbvp}
Fix $\omega^-,\omega^+>0$.
There is a constant $c>0$ such
that for $\abs{\lambda}<c$, $u\in U$, $\abs{u}<c$, and $a =
\left( a^-,a^+ \right)\in \R^n\times\R^n$, $\norm{a}<c$, 
there is a unique pair of solutions
$\left( x^-,x^+ \right)$ of~\eref{eq:system} that satisfies
\begin{description}
  \item[($J$)] $x^{-/+}(a,u,\lambda)(0)\in\Sigma$,
  \quad 
$x^-(a,u,\lambda)(0) -
    x^+(a,u,\lambda)(0) \in Z$,
  \item[($B^-$)] $\left( \id - P^-(u,\lambda)(-\omega^-) \right) \left(
    x^-(a,u,\lambda)(-\omega^-) -
    q^-(u,\lambda)(-\omega^-)- a^-\right) = 0$,
  \item[($B^+$)] $\left( \id -
    P^+(u,\lambda)(\omega^+) \right) \left(
    x^+(a,u,\lambda)(\omega^+) - q^+(u,\lambda)(\omega^+)-a^+
    \right) = 0$.
\end{description}
Moreover, $\left( x^-,x^+ \right)=\left( x^-,x^+ \right)(a,u,\lambda)$ depends
smoothly on $(a,u,\lambda)$.
\end{thm}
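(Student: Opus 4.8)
\emph{Proof idea.} The plan is to obtain $(x^-,x^+)$ as a regular, finite--time perturbation of the reference pair $q^{\pm}=q^{\pm}(u,\lambda)$ from theorem~\ref{thm:splitting}, and to solve the resulting boundary value problem by a contraction argument. First I would substitute $x^{\pm}=q^{\pm}(u,\lambda)+v^{\pm}$ into \eref{eq:system}. On the fixed bounded intervals $[-\omega^-,0]$ and $[0,\omega^+]$ this turns the equation into $\dot v^{\pm}=D_xf(q^{\pm}(u,\lambda)(t),\lambda)\,v^{\pm}+g^{\pm}(v^{\pm},t;u,\lambda)$, where the $C^{k-1}$ remainder $g^{\pm}$ vanishes together with its $v$--derivative at $v^{\pm}=0$ and depends smoothly on $(u,\lambda)$. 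Because $q^{\pm}(u,\lambda)(0)\in\Sigma$ and $q^-(u,\lambda)(0)-q^+(u,\lambda)(0)\in Z$ by theorem~\ref{thm:splitting}, and because $(B^\pm)$ are affine in $x^{\pm}(\pm\omega^{\pm})-q^{\pm}(u,\lambda)(\pm\omega^{\pm})$, the conditions $(J)$, $(B^-)$, $(B^+)$ become the affine conditions $v^{\pm}(0)\in Y$, $v^-(0)-v^+(0)\in Z$, $(\id-P^-(u,\lambda)(-\omega^-))(v^-(-\omega^-)-a^-)=0$ and $(\id-P^+(u,\lambda)(\omega^+))(v^+(\omega^+)-a^+)=0$; in particular $v^{\pm}\equiv0$ solves the problem for $a=0$. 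I would also note that $P^{\pm}(u,\lambda)(0)$ — the projections onto $T_{q^-(u,\lambda)(0)}W_\lambda^u(\gamma^-)$ resp.\ $T_{q^+(u,\lambda)(0)}W_\lambda^s(\gamma^+)$ along the fixed subspaces $W^+\oplus Z$ resp.\ $W^-\oplus Z$ — and hence $\Phi^{\pm}$, $P^{\pm}(u,\lambda)(t)$ and $g^{\pm}$ all depend smoothly on $(u,\lambda)$ near $0$, by theorem~\ref{thm:splitting} and the smooth dependence of ODE solutions on parameters.

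The core step is the linear theory. Writing a solution of $\dot v^-=D_xf(q^-(u,\lambda)(t),\lambda)v^-+h^-(t)$ on $[-\omega^-,0]$ via the transition matrix as $v^-(t)=\Phi^-(t,0)v^-(0)+\int_0^t\Phi^-(t,s)h^-(s)\,ds$, and using $P^-(u,\lambda)(-\omega^-)=\Phi^-(-\omega^-,0)P^-(u,\lambda)(0)\Phi^-(0,-\omega^-)$, one sees that $(B^-)$ prescribes $(\id-P^-(u,\lambda)(0))v^-(0)$ up to an explicit term built from $a^-$ and $h^-$; likewise $(B^+)$ prescribes $(\id-P^+(u,\lambda)(0))v^+(0)$. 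Together with $v^{\pm}(0)\in Y$ and $v^-(0)-v^+(0)\in Z$ this is a linear system for the pair $(v^-(0),v^+(0))\in Y\times Y$. At $(u,\lambda)=0$ one has $q^{\pm}(0,0)=q$, and the decomposition $\R^n=W^+\oplus W^-\oplus U\oplus Z\oplus\mbox{span}\{f(q(0),0)\}$ together with $T_{q(0)}W^u(\gamma^-)=W^-\oplus U\oplus\mbox{span}\{f(q(0),0)\}$ and $T_{q(0)}W^s(\gamma^+)=W^+\oplus U\oplus\mbox{span}\{f(q(0),0)\}$ makes the bookkeeping explicit: $v^{\pm}(0)\in Y$ kills the $\mbox{span}\{f(q(0),0)\}$--component, $(B^-)$ fixes the $W^+$-- and $Z$--components of $v^-(0)$ and $(B^+)$ those of $v^+(0)$ in $W^-$ and $Z$, and the jump condition then fixes the remaining $W^-$--component of $v^-(0)$ and $W^+$--component of $v^+(0)$, leaving precisely the common $U$--component undetermined. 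Hence the homogeneous linear problem has a $\dim U$--dimensional kernel; I would remove it by additionally requiring $P^U(x^{-/+}(0)-q(0))=u$ (matching the convention of theorem~\ref{thm:splitting}), equivalently $P^Uv^{\pm}(0)=0$. By transversality this bookkeeping persists for $(u,\lambda)$ near $0$, so — $\omega^{\pm}$ being fixed — the linear boundary value problem then has a unique solution depending linearly and boundedly on $(h^-,h^+,a^-,a^+)$, uniformly for $(u,\lambda)$ small; call the solution operator $\mathcal L(u,\lambda)$.

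Finally I would close the loop by reading $v^{\pm}=\mathcal L(u,\lambda)\bigl(g^-(v^-,\cdot;u,\lambda),\,g^+(v^+,\cdot;u,\lambda),\,a^-,a^+\bigr)$ as a fixed point equation in $C^0([-\omega^-,0],\R^n)\times C^0([0,\omega^+],\R^n)$. Since $g^{\pm}$ vanishes to second order at $v^{\pm}=0$ and $\mathcal L(u,\lambda)$ is bounded uniformly in the parameters, the right--hand side maps a small ball into itself and is a contraction there for $\norm a$, $\abs u$, $\abs\lambda$ small enough; the uniform contraction principle gives the unique small solution $(v^-,v^+)=(v^-,v^+)(a,u,\lambda)$, with $(v^-,v^+)(0,u,\lambda)=0$, and its smooth dependence on $(a,u,\lambda)$ follows from the smooth version of that principle together with the smoothness of $\mathcal L$ and $g^{\pm}$. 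Setting $x^{\pm}=q^{\pm}(u,\lambda)+v^{\pm}$ then yields the assertion. I expect the linear step to be the main obstacle: one must check that the transversality relations between $T_{q^-(u,\lambda)(0)}W^u_\lambda(\gamma^-)$, $T_{q^+(u,\lambda)(0)}W^s_\lambda(\gamma^+)$, $Y$, $Z$ and $U$ that hold at $(u,\lambda)=0$ persist, and that the reduced finite--dimensional conditions on $(v^-(0),v^+(0))$ are then uniquely solvable once the $U$--component is fixed — the ODE estimates themselves being mild, since the time intervals $[-\omega^-,0]$ and $[0,\omega^+]$ are bounded.
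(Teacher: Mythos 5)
Your proposal is correct and follows essentially the same route as the paper: the same ansatz $x^\pm=q^\pm(u,\lambda)+v^\pm$, the same variation-of-constants reduction of ($B^\pm$) and the jump condition to a uniquely solvable finite-dimensional linear system for $(v^-(0),v^+(0))$ (the paper's invertible operator $L$ in lemma~\ref{lem:link_linear}), and the same contraction closure of the fixed point equation \eref{eq:fixedpoint}. Your point that uniqueness requires fixing the $U$-component of $v^\pm(0)$ is exactly what the paper does implicitly via condition ($J_v$); the only material difference is that the paper also carries the exponential-trichotomy estimates of lemma~\ref{lem:exptrich} to make the bounds (partly) uniform in $\omega^\pm$, which the fixed-$\omega^\pm$ statement itself does not need but which is used later for the jump estimates and for corollary~\ref{cor:linbvp}.
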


\begin{figure}[h]
\begin{center}
\includegraphics[scale=.8]{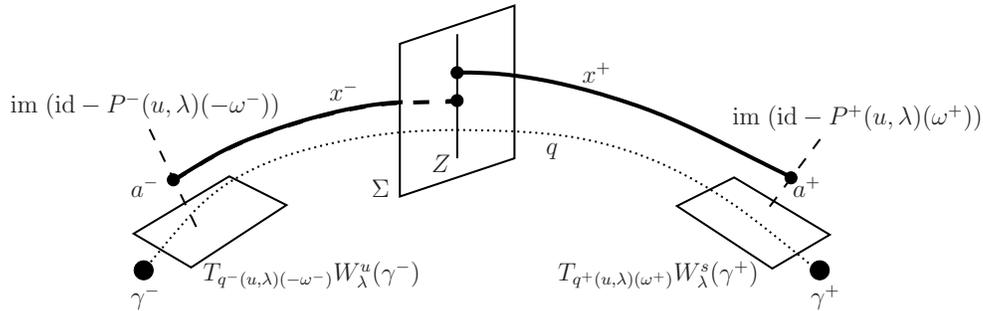} 
\caption{\label{fig:shc}Sketch of a short
Lin orbit segment $(x^-,x^+)$ near a short heteroclinic chain segment $\gamma^-\cup
q\cup\gamma^+$.}
\end{center}
\end{figure}
In other words, theorem~\ref{thm:linbvp} states the existence  of short Lin
orbits segments $X=( x^-,x^+)$ with boundary conditions ($B^-$) and ($B^+$) for
sufficiently small $(a,u,\lambda)$, for a sketch of this situation see
figure~\ref{fig:shc}.  Note that $X$ also depends on $\omega^-,\omega^+>0$,
which we suppress from our notation in this section.  The proof is given in
section~\ref{sec:exptrich}.

\subsection{Orbits in the stable/unstable manifolds --- the proof of
theorem~\ref{thm:splitting}}
\label{sec:splitting}

From a geometrical point of view, the statement of theorem~\ref{thm:splitting}
is rather clear.  Hence we give a proof which exploits the geometry of the traces
of the involved manifolds in~$\Sigma$, see \fref{fig:uinsigma}.
By $B_X(x,r)$ we denote a closed ball in $X$ centered at $x$ with radius $r$. If
the space $X$ is clear from the context we will also write $B(x,r)$ for short.

\begin{proof}[Proof of theorem~\ref{thm:splitting}]
  Using the direct sum decomposition~\eref{eq:Y} we find the following
  representations of the traces in $\Sigma$ of the stable/unstable manifolds of
  $\gamma^{-/+}$ locally around $q(0)$:  For $\varepsilon$ sufficiently small
  and for $w^{s}\in W_\lambda^{s}(\gamma^{+})\cap \Sigma \cap B(q(0),\varepsilon)$ there
  are smooth functions $\tilde{w}^-:W^+\times U\times \R^m \to W^-$ with
  $\tilde{w}^-(0,0,0)=0$, $D_1\tilde{w}^-(0,0,0)=0$ and $z^+:W^+\times U\times
  \R^m \to Z$ such that
  \begin{equation*}
    \label{eq:sp_v+}    
      w^s= q(0)+ w^+ + \tilde{w}^-(w^+,u^+,\lambda)
      + z^+(w^+,u^+,\lambda) + u^+.
  \end{equation*}
  Similarly for $w^{u}\in W_\lambda^{u}(\gamma^{-})\cap \Sigma \cap
  B(q(0),\varepsilon)$ there are smooth functions $\tilde{w}^+:W^-\times
  U\times \R^m \to W^+$ with $\tilde{w}^+(0,0,0)=0$, $D_1\tilde{w}^+(0,0,0)=0$
  and $z^-:W^-\times U\times \R^m \to Z$ such that
  \begin{equation*}
    \label{eq:sp_v-}
      w^u =  q(0)+ \tilde{w}^+(w^-,u^-,\lambda) + w^- +
      z^-(w^-,u^-,\lambda) + u^-.
  \end{equation*}
  The demand that $q^+(u,\lambda)(0) - q^-(u,\lambda)(0) \in Z$ results in $u^- =
  u^+ =: u$ and
  \begin{equation*}
    \eqalign{
      w^+ &= \tilde{w}^+(w^-,u,\lambda),\\
      w^- &= \tilde{w}^-(w^+,u,\lambda),
    }
  \end{equation*}
  which then can be solved for $(w^+,w^-) =
  (\hat{w}^+(u,\lambda),\hat{w}^-(u,\lambda))$ around $(u,\lambda) = (0,0)$.

  Now we get $q^{+}(u,\lambda)$ as the solution of the initial the value problem 
  \begin{equation*}
    \eqalign{ \dot{x}&=f(x,\lambda)\\ x(0)&= q(0)+ \hat{w}^{+}(u,\lambda) +
    \tilde{w}^-(\hat{w}^{+}(u,\lambda),u,\lambda) +
    z^+(\hat{w}^{+}(u,\lambda),u,\lambda) + u, }
  \end{equation*}
  and  $q^{+}(u,\lambda)$ is the solution of the initial the value problem 
  \begin{equation*}
    \eqalign{
    \dot{x}&=f(x,\lambda)\\
    x(0)&= q(0)+ \tilde{w}^+(\hat{w}^-(u,\lambda),u,\lambda) + \hat{w}^-(u,\lambda) +
      z^-(\hat{w}^-(u,\lambda),u,\lambda) + u.
    }
  \end{equation*}
\end{proof}

\subsection{Short Lin orbit segments --- the proof of theorem~\ref{thm:linbvp}}
\label{sec:exptrich}

In this section we give a detailed proof of theorem~\ref{thm:linbvp}.  This
proof is based on the ideas of Lin's method, but used in a slightly different
way.  The main difference to the `classical' proof of Lin's method is that
we keep the boundary conditions near $\gamma^-$ and $\gamma^+$ as linear
projection conditions, while finding solutions of the full nonlinear system that
additionally satisfy certain jump conditions.

We start with a lemma that provides some properties of the projections
$P^\pm(u,\lambda)(\cdot)$, as introduced in~\eref{eq:projectionplus}
and~\eref{eq:projectionminus}, that are used in the proofs throughout
this section.

\begin{lem}
  \label{lem:exptrich}
  There are projections $P_s^+(u,\lambda)(\cdot)$ and $P_c^+(u,\lambda)(\cdot)$
  such that
  \begin{equation*}
    \eqalign{
    P^+(u,\lambda)(t) &= P_s^+(u,\lambda)(t) + P_c^+(u,\lambda)(t) \mbox{ for
    } t\in\R^+.
    }
  \end{equation*}
  The projections $P_{s/c}^+(u,\lambda)(\cdot)$ satisfy the following:
  \begin{enumerate}
   \item $\Phi(t,\tau) P_{s/c}^+(\tau) = P_{s/c}^+(t) \Phi(t,\tau) \quad \forall
        t,\tau \in \R^+$,
    \item there are constants $K>0$, $\delta^s,\delta^u > \delta^c \ge 0$ such
      that
      \begin{eqnarray*}
        \abs{ \Phi(t,\tau) P^+_s(\tau) } \le K e^{-\delta^s (t-\tau)} & ,t \ge
        \tau,\\
        \abs{ \Phi(t,\tau) P^+_c(\tau) } \le K e^{ \delta^c (t-\tau)} & ,t \ge
        \tau,\\
        \abs{ \Phi(t,\tau) P^+_c(\tau) } \le K e^{-\delta^c (t-\tau)} & ,\tau \ge
        t,\\
        \abs{ \Phi(t,\tau) (\id - P^+(\tau) ) } \le K e^{-\delta^u (\tau-t)} & ,\tau \ge t.
      \end{eqnarray*}

  \end{enumerate}
\end{lem}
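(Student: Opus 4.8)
The plan is to reduce the statement to the standard theory of exponential dichotomies and trichotomies for the variational equation along an orbit asymptotic to a hyperbolic periodic orbit (or equilibrium). First I would recall that, by hyperbolicity of $\gamma^+$, the linearization of the flow near $\gamma^+$ admits an exponential trichotomy: a stable, an unstable, and a centre direction, the last one being spanned by the flow direction along $\gamma^+$ and hence giving the (at most) neutral rate $\delta^c\ge 0$ (with $\delta^c = 0$ when $\gamma^+$ is a genuine periodic orbit and one may take $\delta^c$ an arbitrarily small positive number, or even $0$, when $\gamma^+$ is an equilibrium). Since $q^+(u,\lambda)(t)\to\gamma^+$ as $t\to+\infty$ and depends smoothly on $(u,\lambda)$, the variational equation $\dot v = D_1 f(q^+(u,\lambda)(t),\lambda)v$ is an asymptotically autonomous (more precisely, asymptotically $\gamma^+$-periodic) perturbation of that trichotomic equation; by the roughness theorem for exponential dichotomies/trichotomies it inherits an exponential trichotomy on $\R^+$, with constants $K,\delta^s,\delta^u,\delta^c$ that can be chosen uniformly for $(u,\lambda)$ in a small neighbourhood of $(0,0)$. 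This yields invariant projections $Q_s(t), Q_c(t), Q_u(t)$ with $Q_s+Q_c+Q_u=\id$, satisfying the commutation relation in (i) and the exponential bounds in (ii) (the bound on $\id - P^+$ corresponding to $Q_u$, the two bounds on $P_c^+$ to $Q_c$, and the bound on $P_s^+$ to $Q_s$).

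The next step is to match these \emph{dynamically defined} projections to the \emph{geometrically defined} projection $P^+(u,\lambda)(0)$ from~\eref{eq:projectionplus}. The key observation is that $\im P^+(u,\lambda)(0) = T_{q^+(u,\lambda)(0)}W^s_\lambda(\gamma^+)$ is exactly the stable-plus-centre subspace $\im(Q_s(0)+Q_c(0))$ of the trichotomy: indeed the tangent space to the stable manifold of a periodic orbit at a point of an orbit converging to it is precisely the sum of the strong-stable directions and the flow direction. Hence I would \emph{define} $P_s^+(u,\lambda)(0) := Q_s(0)$ and $P_c^+(u,\lambda)(0) := Q_c(0)$, so that automatically $P_s^+(0)+P_c^+(0) = Q_s(0)+Q_c(0)$ has image $\im P^+(u,\lambda)(0)$. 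To get the identity $P^+(u,\lambda)(0) = P_s^+(u,\lambda)(0) + P_c^+(u,\lambda)(0)$ \emph{as projections} — not merely an equality of images — one also needs the kernels to agree, i.e. $\ker P^+(u,\lambda)(0) = W^-\oplus Z = \im Q_u(0)$. This is not automatic, because the trichotomy projections are not canonically normalized along the unstable direction. I would fix this by the standard device of replacing $Q_s(0), Q_c(0)$ by the projections with the same images but with kernel adjusted to equal $\im Q_u(0)$'s prescribed complement $W^-\oplus Z$; since $\dim(W^-\oplus Z)$ matches $\dim\im Q_u(0)$ and $W^-\oplus Z$ is (for small $u,\lambda$, by the transversality built into the construction of $W^\pm, Z$) a complement to $T_{q^+(u,\lambda)(0)}W^s_\lambda(\gamma^+)$, this is a well-defined small modification depending smoothly on $(u,\lambda)$. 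Propagating by $P_{s/c}^+(u,\lambda)(t) := \Phi^+(t,0)P_{s/c}^+(u,\lambda)(0)\Phi^+(0,t)$ as in~\eref{eq:projectionplus} then gives projections satisfying (i) by construction, and the exponential estimates (ii) survive this bounded change of projections at $t=0$ (at the cost of enlarging $K$), because the change only mixes the three spectral components by a fixed bounded amount.

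Finally I would remark that only $P^+$ is treated explicitly; the statement for $P^-$ on $\R^-$ is obtained verbatim by reversing time, replacing the roles of stable and unstable, and using $\ker P^-(u,\lambda)(0) = W^+\oplus Z$ from~\eref{eq:projectionminus}. The main obstacle, and the only genuinely non-routine point, is the kernel-matching described above: ensuring that the spectral projections supplied abstractly by the roughness of trichotomies can be aligned with the explicitly prescribed kernel $W^-\oplus Z$ (resp.\ $W^+\oplus Z$) smoothly and uniformly in $(u,\lambda)$, and checking that $W^-\oplus Z$ genuinely remains transverse to the moving tangent space $T_{q^+(u,\lambda)(0)}W^s_\lambda(\gamma^+)$ for small parameters — which follows from the definitions of $W^\pm,U,Z$ at $\lambda=0$ together with continuous dependence. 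Everything else is an application of classical exponential dichotomy/trichotomy theory (roughness, smooth dependence on parameters) to the variational equation along $q^\pm(u,\lambda)$.
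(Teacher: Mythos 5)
Your proposal is correct and follows essentially the same route as the paper, which simply invokes the exponential trichotomy of the variational equation along $q^+(u,\lambda)$ on $\R^+$ (citing Beyn, Hale--Lin and Chow--Lin) together with the observation that $\delta^c=0$ for a hyperbolic periodic orbit and that $\im P_s^+(t)$ is the tangent space of the strong stable fibre. The only point you elaborate beyond the paper's one-line argument --- realigning the kernels of the trichotomy projections with the prescribed complement $W^-\oplus Z$ and checking that the estimates survive this bounded renormalization --- is exactly the detail the paper leaves implicit, and your treatment of it is sound.
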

This lemma follows immediately from the fact that the variational equation along
the solutions $q^+(u,\lambda)$ has an exponential trichotomy on
$\R^+$~\cite{Beyn1994,HL86,CL90}.  Note that the exponents $\delta^{s/c/u}$
are determined by the eigenvalues/Floquet exponents of $\gamma^+$.  Since
$\gamma^+$ is a hyperbolic periodic orbit, we have $\delta^c = 0$~\cite{Beyn1994}.  We
want to note explicitely that the images of $P^+_s(u,\lambda)(t)$ are
well-determined --- these are the tangent spaces of the strong stable fiber of
$\gamma^+$ at $q^+(u,\lambda)(t)$.  Also note that if $\gamma^+$ is a
hyperbolic equilibrium, the variational equation along $q^+(u,\lambda)$ has in
fact an exponential dichotomy \cite{Coppel1978}, i.e. $P_c^+(u,\lambda) = 0$,
and $\im P^+_s(u,\lambda)(t)=T_{q^+(u,\lambda)(t)}W_\lambda^s(\gamma^+)$.

For the projection $P^-(u,\lambda)$ an analogous lemma holds, exploiting
that the variational equation along the solutions $q^-(u,\lambda)$ has an
exponential trichotomy on $\R^-$: \[ P^-(u,\lambda)(t) = P_u^-(u,\lambda)(t) +
P_c^-(u,\lambda)(t) \mbox{ for } t\in\R^-, \] where $P_u^-(u,\lambda)(t)$
projects on the tangent space of the strong unstable fiber at
$q^-(u,\lambda)(t)$.

To prove theorem~\ref{thm:linbvp} we consider small perturbations of the
solutions $q^{-/+}(u,\lambda)$:
\begin{eqnarray}\label{eq:setting_x}
\eqalign{
  x^-(t) &:= q^-(u,\lambda)(t) + v^-(t),\quad t\in\R^-,\\
  x^+(t) &:= q^+(u,\lambda)(t) + v^+(t),\quad t\in\R^+.
}
\end{eqnarray}
The perturbation terms $v^-$ and $v^+$ are solutions of 
\begin{equation}
\label{eq:nonlinvar}
\eqalign{
  \dot{v}^- &= D_1 f(q^{-}(u,\lambda)(t),\lambda)v^- + h^-(t,v^-,u,\lambda),\\
  \dot{v}^+ &= D_1 f(q^{+}(u,\lambda)(t),\lambda)v^+ + h^+(t,v^+,u,\lambda),
}
\end{equation}
where $h^{\pm}(t,v,u,\lambda) := f(q^{\pm}(u,\lambda)(t)+v,\lambda) -
f(q^{\pm}(u,\lambda)(t),\lambda) - D_1 f(q^{\pm}(u,\lambda)(t),\lambda)$.  The
boundary conditions ($J$) and ($B^\pm$) for $x^\pm$ yield boundary
conditions for $v^\pm$:
\begin{description}
  \item[($J_v$)] ${v}^\pm(0) \in W^-\oplus W^+ \oplus Z$,\quad
    ${v}^-(0) - {v}^+(0) \in Z$, 
  \item[($B_v^-$)]
    $\left(\id-P^-(u,\lambda)(-\omega^-)\right) {v}^-(-\omega^-)
    = \left(\id-P^-(u,\lambda)(-\omega^-)\right)a^-$, 
  \item[($B_v^+$)]
    $\left(\id-P^+(u,\lambda)(\omega^+)\right) {v}^+(\omega^+) =
    \left(\id-P^+(u,\lambda)(\omega^+)\right)a^+$.
\end{description}

In a first approximation of~\eref{eq:nonlinvar}, we replace the functions
$h^-$ and $h^+$ by functions $g^-$ and $g^+$ only depending on $t$:
\begin{equation}
\label{eq:nonlinvarg}
\eqalign{
  \dot{v}^- &= D_1 f(q^{-}(u,\lambda)(t),\lambda)v^- + g^-(t),\cr
  \dot{v}^+ &= D_1 f(q^{+}(u,\lambda)(t),\lambda)v^+ + g^+(t).
}
\end{equation}

For given $\omega^-$ and $\omega^+$ we write $\bo:=(\omega^-,\omega^+)$, and 
we introduce the space $V_\bo$ of pairs of continuous functions as
\begin{equation*}
  V_\bo := \left\lbrace \left(v^-,v^+\right) : v^- \in
  C\left(\left[-\omega^-,0\right],\R^n\right) \mbox{ and } v^+ \in
  C\left(\left[0,\omega^+\right],\R^n\right) \right\rbrace.
\end{equation*}
We equip $V_\bo$ with the norm
$\norm{\left(v^-,v^+\right)} := \max\left\lbrace \norm{v^-},\norm{v^+}
\right\rbrace$, where $\norm{v^\pm}$ denotes the supremum norm.

We actually perform the proof of theorem~\ref{thm:linbvp} in two steps. First we
consider the `linearized' equation \eref{eq:nonlinvarg} (linearized in the sense
that $g^\pm$ does not depend on $v^\pm$) and show that there are unique
solutions $\hat v^\pm$ satisfying boundary conditions ($J_v$), ($B_v^-$) and
($B_v^+$); see Lemma~\ref{lem:link_linear} below.  Of course $\hat v^\pm$
depend (among others) on $g^\pm$.  In the next step we replace the function
$g^\pm$ in these dependencies by $h^\pm$. This gives a fixed point equation (see
\eref{eq:fixedpoint} below) that is equivalent to \eref{eq:nonlinvar} with
boundary conditions ($J_v$), ($B_v^-$) and ($B_v^+$).

\begin{lem}
\label{lem:link_linear}
Let $u$ and $\lambda$ be in accordance with theorem~\ref{thm:splitting}, and let
$\bo$ be fixed.  Then, for given $a=\left(a^-,a^+\right)\in \R^n\times\R^n$ and
$g=\left(g^-,g^+\right)\in V_\bo$ there is a unique pair of solutions $\hat{v} =
\left( \hat{v}^-,\hat{v}^+ \right)\in V_\bo$, $\hat{v} =\hat{v}(g,a,u,\lambda)$,
of~\eref{eq:nonlinvarg} that satisfy boundary conditions ($J_v$), ($B_v^-$) and
($B_v^+$).  \\ For fixed $u$ and $\lambda$ the pair of solutions $\hat{v}$ depends linearly on
$(g,a)$, and it depends smoothly on $(g,a,u,\lambda)$.  Moreover, there are
constants $\hat{C}_a,\,\hat{C}_g >0$ such that
  \begin{eqnarray}
    \label{eq:normv}
    \norm{\hat{v}(g,a,u,\lambda)} \le \hat{C}_a 
    \norm{a} + \hat{C}_g \norm{g}.
  \end{eqnarray}
The constant $\hat{C}_a$ is uniform in $\omega^+$ and $\omega^{-}$, while
$\hat{C}_g$ is uniform in $\omega^\pm$ only if $\gamma^\pm$ is an equilibrium.
\end{lem}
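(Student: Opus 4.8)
The plan is to solve the two linear inhomogeneous problems on $[-\omega^-,0]$ and $[0,\omega^+]$ separately using the exponential trichotomies from Lemma~\ref{lem:exptrich}, parametrising each solution by its free data, and then to couple the two halves through the jump condition ($J_v$). First I would treat the $+$-equation in \eref{eq:nonlinvarg}. Using the trichotomy projections $P_s^+$, $P_c^+$ and $\id-P^+$ along $q^+(u,\lambda)$, the general bounded solution on $[0,\omega^+]$ can be written via a variation-of-constants formula in which the $P_s^+$-part is integrated forward from $0$, the $(\id-P^+)$-part is integrated backward from $\omega^+$, and the $P_c^+$-part (the center direction, coming from the periodic orbit) is integrated forward from $0$ as well — all of this evaluated with the Green's-function-type kernels built from $\Phi^+(t,\tau)$. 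The remaining freedom is the value $P^+(u,\lambda)(0)\hat v^+(0)$, i.e.\ a choice in $\im P^+(u,\lambda)(0) = T_{q^+(u,\lambda)(0)}W_\lambda^s(\gamma^+)$; the condition ($B_v^+$) is automatically compatible because the $(\id-P^+)$-component of $\hat v^+(\omega^+)$ is fixed to $(\id-P^+(u,\lambda)(\omega^+))a^+$ by construction. Symmetrically, on $[-\omega^-,0]$ I would parametrise $\hat v^-$ by $P^-(u,\lambda)(0)\hat v^-(0)\in T_{q^-(u,\lambda)(0)}W_\lambda^u(\gamma^-)$, with ($B_v^-$) handled by the backward-in-time $(\id-P^-)$-part.

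Next I would impose ($J_v$). Recall $\ker P^+(u,\lambda)(0)=W^-\oplus Z$ and $\ker P^-(u,\lambda)(0)=W^+\oplus Z$, while by construction $\im P^+(u,\lambda)(0)\subset T_{q^+(u,\lambda)(0)}W^s(\gamma^+)$ and $\im P^-(u,\lambda)(0)\subset T_{q^-(u,\lambda)(0)}W^u(\gamma^-)$, and the traces of these manifolds in $\Sigma$ are governed by Theorem~\ref{thm:splitting}. The two requirements in ($J_v$) are that $\hat v^\pm(0)$ lie in $W^-\oplus W^+\oplus Z$ and that $\hat v^-(0)-\hat v^+(0)\in Z$. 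Writing $\hat v^+(0)=P^+(u,\lambda)(0)\hat v^+(0)+(\text{forced inhomogeneous terms})$ and similarly for $\hat v^-(0)$, and decomposing everything along \eref{eq:Y}, ($J_v$) becomes a finite-dimensional linear system for the free data $(P^+(u,\lambda)(0)\hat v^+(0),\,P^-(u,\lambda)(0)\hat v^-(0))$. The key structural point — which mirrors the algebra in the proof of Theorem~\ref{thm:splitting} — is that the "$U$-plus-$f(q(0),0)$" common directions are already pinned down ($P^U$ of the data is $u$ there, and the flow direction is accounted for by working in $\Sigma$), so the map from the free data to the $(W^+\oplus W^-\oplus Z)$-components of the jump condition is an isomorphism at $(a,g,u,\lambda)=0$ and hence, by continuity, for all small parameters. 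This gives existence and uniqueness of $\hat v$, and linearity in $(g,a)$ is immediate since every step is linear in those arguments; smoothness in $(u,\lambda)$ follows from smooth dependence of $q^\pm$, of the trichotomy projections, and of the (invertible) coupling matrix on $(u,\lambda)$.

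Finally I would establish the estimate \eref{eq:normv}. The forward/backward integrals against the $P_s^+$- and $(\id-P^+)$-kernels are bounded by $K\delta^{-1}$ times $\norm{g}$ with constants independent of $\omega^\pm$, since $\delta^{s},\delta^{u}>0$ and the exponential factors in Lemma~\ref{lem:exptrich}(ii) are integrable on all of $\R^\pm$; likewise the $a$-contribution is controlled by $K\norm{a}$ uniformly in $\omega^\pm$, giving the stated uniformity of $\hat C_a$. The center part is the source of the non-uniformity: if $\gamma^+$ is a genuine periodic orbit then $\delta^c=0$, so $\abs{\Phi^+(t,\tau)P_c^+(\tau)}\le K$ with no decay, and integrating the center component of $g^+$ over $[0,\omega^+]$ costs a factor proportional to $\omega^+$; only when $\gamma^\pm$ is an equilibrium does $P_c^\pm$ vanish (exponential dichotomy) and $\hat C_g$ become $\omega^\pm$-independent. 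The main obstacle, then, is not the fixed-point/contraction machinery — that is postponed to the next step of the paper — but getting the bookkeeping of the coupling right: one must check carefully that after substituting the variation-of-constants representations into ($J_v$), the free data really do decouple from the already-determined $U$- and flow-directions, so that the finite-dimensional coupling operator is invertible uniformly for small $(u,\lambda)$, and that this invertibility does not deteriorate as $\omega^\pm\to\infty$ (it does not, because the "interaction" terms linking the two halves through the boundary conditions at $\mp\omega^\mp$ are exponentially small in $\omega^\mp$). I would verify this by a Neumann-series perturbation of the $\omega^\pm=\infty$ coupling operator.
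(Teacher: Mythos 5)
Your plan is correct and follows essentially the same route as the paper: variation of constants along $q^\pm(u,\lambda)$, reduction of the coupled boundary-value problem to an invertible finite-dimensional linear system at $t=0$, and the trichotomy estimates of lemma~\ref{lem:exptrich} with the center direction ($\delta^c=0$) accounting for the $\omega^\pm$-growth of $\hat C_g$ when $\gamma^\pm$ is a genuine periodic orbit. The only cosmetic difference is the ordering --- the paper builds ($J_v$) into the ansatz $v^\pm(0)=w^-+w^++z^\pm$ and uses the transformed conditions ($B_v^\pm$) as the equations for the invertible operator $L$, whereas you build ($B_v^\pm$) into the variation-of-constants representation and impose ($J_v$) --- and since the resulting coupling operator does not depend on $\omega^\pm$ (only the inhomogeneous terms do), your concluding Neumann-series step is unnecessary.
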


\begin{proof}
  For a shorter notation in this proof we omit the dependencies of $\Phi$ and
  $P^\pm$ on $u$ and $\lambda$.  The variation of constants formula gives
  \begin{equation}
    \label{eq:varconst}
    \eqalign{
      v^-(t) &= \Phi^-(t,0) v^-(0) + \int_0^t \Phi^-(t,\tau)g^-(\tau) d\tau,\\
      v^+(t) &= \Phi^+(t,0) v^+(0) + \int_0^t \Phi^+(t,\tau)g^+(\tau) d\tau
    }
  \end{equation}
  as solutions of \eref{eq:nonlinvarg}, which can be transformed to
  \begin{eqnarray*}
    \left( \id-P^-(0) \right) v^-(0) &=& \Phi^-(0,-\omega^-) \left(
    \id-P^-(-\omega^-) \right) v^-(-\omega^-)\\
    &&\;\; + \int_{-\omega^-}^0
    \Phi^-(0,\tau)\left( \id - P^-(\tau) \right) g^-(\tau) d\tau,\\
    \left(
    \id-P^+(0) \right) v^+(0) &=& \Phi^+(0,\omega^+) \left( \id-P^+(\omega^+)
    \right) v^+(\omega^+)\\
    &&\;\; - \int_0^{\omega^+} \Phi^+(0,\tau)\left( \id -
    P^+(\tau) \right) g^+(\tau) d\tau.
  \end{eqnarray*}
  Thus
  \begin{equation}
  \label{eq:linop}
  \eqalign{
    \left( \id-P^-(0) \right) v^-(0) &= \Phi^-(0,-\omega^-) \left(
    \id-P^-(-\omega^-) \right)  a^- \\
    &\;\;\;\; +\int_{-\omega^-}^0 \Phi^-(0,\tau)\left( \id - P^-(\tau) \right) g^-(\tau)
    d\tau,\\
    \left( \id-P^+(0) \right) v^+(0) &= \Phi^+(0,\omega^+) \left( \id-P^+(\omega^+)
    \right) a^+ \\
   &\;\;\;\;  -\int_0^{\omega^+} \Phi^+(0,\tau)\left( \id - P^+(\tau) \right) g^+(\tau)
    d\tau.
  }
  \end{equation}

  We decompose $v^-(0),v^+(0)$ in accordance with the boundary condition ($J_\nu$)
  \begin{eqnarray*}
    v^-(0) &=& w^- + w^+ + z^-,\\
    v^+(0) &=& w^- + w^+ + z^+,
  \end{eqnarray*}
  where $w^-\in W^-$, $w^+\in W^+$ and $z^\pm \in Z$.

  With that the left-hand side of~\eref{eq:linop} can be considered as a linear mapping
  \begin{equation*}
\begin{array}{lccc}
    L :& W^+ \times W^- \times Z \times Z &\to& (W^+ \oplus Z)\times(W^- \oplus Z)
\\[1ex]
&(w^+,w^-,z^+,z^-)&\mapsto& \left(\begin{array}{c}
      ( w^+ + z^- )\\
      ( w^- + z^+ )
    \end{array}\right),
\end{array}
  \end{equation*}
  which is invertible. Hence we can solve~\eref{eq:linop} for $v^+(0),v^-(0)$ linearly
  depending on $(g,a)$.  Incorporating the dependence on $(u,\lambda)$ finally gives
  a solution $\hat{v} = \hat{v}(g,a,u,\lambda)$ of \eref{eq:nonlinvarg}
  satisfying the boundary conditions ($J_\nu$), ($B_\nu^-$) and ($B_\nu^+$).
  Note that $\hat{v}$ depends linearly on $(g,a)$, and smoothly on
  $(g,a,u,\lambda)$.

  To prove estimate~\eref{eq:normv}, we decompose $\hat{v}^+$ by means of the
  projection $P^+$:
  \begin{equation*}
    \hat{v}^+(t) = (\id - P^+(t))\hat{v}^+(t) + P^+(t)\hat{v}^+(t).
  \end{equation*}
  Thus we have
  \begin{equation}
    \label{eq:ou_twoterms}
    \abs{\hat{v}^+(t)} \le \abs{ (\id - P^+(t))\hat{v}^+(t) } + \abs{
    P^+(t)\hat{v}^+(t) }.
  \end{equation}

  We use the variation of constants formula and the estimates of
  lemma~\ref{lem:exptrich}
  to derive an estimate for the second term of~\eref{eq:ou_twoterms}:
  \begin{equation*}
    \eqalign{
      \abs{ P^+(t)\hat{v}^+(t) } &= \abs{ P^+(t)\left( \Phi^+(t,0)\hat{v}^+(0)
      + \int_0^t \Phi^+(t,\tau)g^+(\tau) d\tau \right) }\\ 
      &\le K (e^{-\delta^s t} + e^{\delta^c
      t}) \abs{ \hat{v}^+(0) } + M \norm{g^+}\\ 
      &\le K (e^{-\delta^s t} + 1)
      \abs{ \hat{v}^+(0) } + M \norm{g^+}.
    }
  \end{equation*}
  The constants $\delta^s$, $\delta^c$ and $K$ are the corresponding constants
  of the exponential trichotomy ($\delta^s>\delta^c =0$;
  see lemma~\ref{lem:exptrich} and the remarks above).
  Note that if $\gamma^+$ is not an equilibrium, the
  constant $M$ depends on $\omega^+$, in fact $M\to\infty$ as
  $\omega^+\to\infty$ in the same order as $\omega^+$.

  We estimate $\abs{ \hat{v}^+(0) }$ by applying $L^{-1}$ to~\eref{eq:linop}
  and using lemma~\ref{lem:exptrich} once again:
  \begin{equation}
    \label{eq:ou_normv0}
    \abs{ \hat{v}^+(0) } \le \norm{L^{-1}} \tilde{K} \left(\abs{a^+} +
    \abs{a^-}\right) + \hat M \norm{ (g^+, g^-) }.
  \end{equation}
  Here, the constant $\hat M$ is uniform in $\omega^+$ and
  $\omega^-$.

Thus we have
  \begin{equation*}
    \abs{ P^+(t)v^+(t) } \le C_{1,a} \norm{a} + C_{1,g}\norm{g}).
  \end{equation*}
The constant $C_{1,a}$ is uniform in $\omega^+$ and $\omega^-$, while 
$C_{1,g}(\bo)$ tends to infinity in the same order as $\|\bo\|$.

  For the first term of the right hand side of~\eref{eq:ou_twoterms} we use
  \begin{equation*}
    \eqalign{
      (\id - P^+(t))\hat{v}^+(t) =& \Phi^+(t,\omega^+)(\id -
      P^+(\omega^+))a^+\\
      &\quad - \int_t^{\omega^+} \Phi^+(t,\tau) (\id - P^+(\tau)) g^+(\tau)
      d\tau
    }
  \end{equation*}
  and thus finally get
  \begin{equation*}
    \abs{ (\id - P^+(t)) \hat{v}^+(t) } \le C_2 (\abs{a^+} + \norm{g}).
  \end{equation*}
Note that $C_2$ is uniform in $\omega^+$ (and does not depend on $\omega^-$).

Summarizing, there are constants $\hat C^+_a$ and $\hat C^+_g$
such that
\[
\norm{\hat v^+(g,a,u,\lambda)}\le \hat C^+_a\norm{a}+ \hat C^+_g\norm{g},
\]
where $\hat C^+_a$ is uniform in $\omega^\pm$ and $\hat C^+_g(\bo)$ tends to infinity in the same order as $\|\bo\|$.
  Proceeding  with $\hat{v}^-$ in a similar way yields
  estimate~\eref{eq:normv}.
\end{proof}

\begin{lem}\label{lem:link_linear_b}
Let the assumption of lemma~\ref{lem:link_linear} hold.  We define functions
  \begin{equation*}
    \eqalign{
      \hat{a}_\perp^+(g,a,u,\lambda) &:=
      P_s^+(u,\lambda)(\omega^+)\hat{v}^+(g,a,u,
      \lambda)(\omega^+),\\
      \hat{a}_\perp^-(g,a,u,\lambda) &:=
      P_u^-(u,\lambda)(-\omega^-)\hat{v}^-(g,a,u,
      \lambda)(-\omega^-).
    }
  \end{equation*}
  There are constants $\delta^s,\delta^u>0$ and $\hat C>0$ such that 
  \begin{equation}
    \label{eq:est_aperp1}
    \abs{\hat{a}_\perp^+} \le \hat{C}\left( e^{-\delta^s\omega^+}  \norm{a} 
       + \norm{g}\right),\quad
      \abs{\hat{a}_\perp^-} \le \hat{C}\left( e^{-\delta^u\omega^-} \norm{a} 
      + \norm{g}\right).
  \end{equation}
  For the derivatives of $\hat{a}_\perp^+$ and $\hat{a}_\perp^-$ the following 
  estimates hold:

  \begin{equation}
    \label{eq:est_aperpder1}
    \norm{D_2 \hat{a}_\perp^+(g,a,u,\lambda)} \le \hat{C}
      e^{-\delta^s\omega^+},\quad 
      \norm{D_2 \hat{a}_\perp^-(g,a,u,\lambda)} \le \hat{C} e^{-\delta^u\omega^-}
  \end{equation}
  and
 \begin{equation}
    \label{eq:est_aperpder2}
    \norm{D_1 \hat{a}_\perp^+(g,a,u,\lambda)}, \norm{D_1 \hat{a}_\perp^-(g,a,u,\lambda)} \le \hat{C}.
\end{equation}

\end{lem}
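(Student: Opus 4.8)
The plan is to obtain the estimates on $\hat a_\perp^\pm$ directly from the variation-of-constants representation of $\hat v^\pm$ established in the proof of lemma~\ref{lem:link_linear}, combined with the sharpened decay provided by the strong-stable/strong-unstable parts of the trichotomy in lemma~\ref{lem:exptrich}. I focus on $\hat a_\perp^+$; the argument for $\hat a_\perp^-$ is the time-reversed mirror. Starting from the variation of constants formula for $\hat v^+$ and applying $P_s^+(u,\lambda)(\omega^+)$, and using that $P_s^+$ commutes with the transition matrix (lemma~\ref{lem:exptrich}(i)) so that $P_s^+(\omega^+)\Phi^+(\omega^+,\tau)=\Phi^+(\omega^+,\tau)P_s^+(\tau)$, I write
\begin{equation*}
\hat a_\perp^+ = \Phi^+(\omega^+,0)P_s^+(0)\hat v^+(0) + \int_0^{\omega^+}\Phi^+(\omega^+,\tau)P_s^+(\tau)g^+(\tau)\,d\tau.
\end{equation*}
The first term is bounded by $Ke^{-\delta^s\omega^+}|\hat v^+(0)|$ by the strong-stable estimate of lemma~\ref{lem:exptrich}(ii), and then $|\hat v^+(0)|$ is controlled via \eref{eq:ou_normv0} by $C(\|a\|+\|g\|)$; crucially the $e^{-\delta^s\omega^+}$ prefactor kills the growth of that bound in $\omega^+$, so this contributes $\hat C e^{-\delta^s\omega^+}(\|a\|+\|g\|)$. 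The integral term is bounded, again by the strong-stable estimate, by $K\|g^+\|\int_0^{\omega^+}e^{-\delta^s(\omega^+-\tau)}\,d\tau \le (K/\delta^s)\|g^+\|$, which is uniform in $\omega^+$ and contributes $\hat C\|g\|$. Adding the two gives exactly \eref{eq:est_aperp1}.

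For the derivative estimates I differentiate the same representation. Since, for fixed $u,\lambda$, the map $(g,a)\mapsto\hat v^+$ is linear (lemma~\ref{lem:link_linear}), $\hat a_\perp^+$ is linear in $(g,a)=(g,(a^-,a^+))$; writing $D_2$ for the derivative with respect to the second argument $a$ and $D_1$ for the derivative with respect to $g$, I get $D_2\hat a_\perp^+ = \Phi^+(\omega^+,0)P_s^+(0)D_2\hat v^+(0)$, and from the explicit solve of \eref{eq:linop} through $L^{-1}$ together with lemma~\ref{lem:exptrich} one has $\|D_2\hat v^+(0)\|\le \tilde C$ uniformly in $\omega^\pm$ (this is exactly the content of \eref{eq:ou_normv0} read as a bound on the linear part in $a$). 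Hence $\|D_2\hat a_\perp^+\|\le K\tilde C e^{-\delta^s\omega^+}$, giving \eref{eq:est_aperpder1}. For $D_1$, differentiating in $g$ produces both a boundary contribution $\Phi^+(\omega^+,0)P_s^+(0)D_1\hat v^+(0)$ — again exponentially small by the same reasoning — and the integral term $\int_0^{\omega^+}\Phi^+(\omega^+,\tau)P_s^+(\tau)(D_1 g^+)(\tau)\,d\tau$, whose operator norm is $\le K/\delta^s$, uniformly in $\omega^+$. Thus $\|D_1\hat a_\perp^+\|\le \hat C$, which is \eref{eq:est_aperpder2}. The analogous computations on $\R^-$ using $P_u^-$ and the strong-unstable estimate yield the corresponding bounds for $\hat a_\perp^-$ with $\delta^u$ in place of $\delta^s$.

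The one genuinely delicate point — and the step I expect to be the main obstacle — is making sure the bound on $|\hat v^+(0)|$ (and on its linear parts in $a$ and $g$) that gets multiplied by $e^{-\delta^s\omega^+}$ is at worst polynomially growing in $\omega^+$, so that the product is still bounded (in fact exponentially decaying). By \eref{eq:ou_normv0} the $a$-dependent part of $|\hat v^+(0)|$ is genuinely uniform in $\omega^\pm$, so there the decay is clean. The $g$-dependent part of $\hat v^+(0)$ is only uniform in $\omega^\pm$ when $\gamma^+$ is an equilibrium; when $\gamma^+$ is a periodic orbit it grows like $\|\bo\|$ (the center-direction integral in lemma~\ref{lem:link_linear}), but since $e^{-\delta^s\omega^+}\cdot\omega^+\to0$ this still produces a term of the form $\hat Ce^{-\delta^s\omega^+}\|g\|$ after absorbing the polynomial factor into a slightly smaller exponent — which is why the statement is allowed to keep $\|g\|$ (rather than $e^{-\delta^s\omega^+}\|g\|$) in \eref{eq:est_aperp1} and to state the derivative bounds without any $\|g\|$-prefactor at all. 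Keeping careful track of this, and of the fact that $P_s^+$, not $P^+$, is what enters (so that one genuinely uses the strong-stable rate $\delta^s$ and not the center rate $\delta^c=0$), is the crux; everything else is a routine application of Gronwall-type integral estimates already assembled in lemmas~\ref{lem:exptrich} and~\ref{lem:link_linear}.
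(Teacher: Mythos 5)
Your proposal is correct and follows essentially the same route as the paper: apply $P_s^+(\omega^+)$ (resp.\ $P_u^-(-\omega^-)$) to the variation-of-constants representation of $\hat v^\pm$, use the strong-stable/strong-unstable rates from lemma~\ref{lem:exptrich} together with the $\omega$-uniform bound \eref{eq:ou_normv0} on $\hat v^+(0)$, and obtain the derivative bounds from the linearity of $\hat v^\pm$ (and hence of $\hat a_\perp^\pm$) in $(g,a)$. One small correction: the $g$-part of the bound on $\hat v^+(0)$ in \eref{eq:ou_normv0} is already uniform in $\omega^\pm$ (the non-uniform constant $\hat C_g$ concerns the sup norm of $\hat v^+$ over the whole interval, via the center-direction integral), so the ``delicate point'' you flag does not actually arise — though, as you note, your argument would survive it anyway.
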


\begin{proof}
  For estimate~\eref{eq:est_aperp1} we use lemma~\ref{lem:exptrich} again:
  \begin{equation*}
    \fl\eqalign{
    \abs{\hat{a}_\perp^+(g,a,u,\lambda)} &= \abs{P_s^+(\omega^+)\Big(
      \Phi^+(\omega^+,0)\hat{v}^+(0)+\int_0^{\omega^+}
      \Phi^+(\omega^+,\tau)g^+(\tau) d\tau \Big)}\\ 
      &\le
      Ke^{-\delta^s\omega^+}\abs{\hat{v}^+(0)} + \hat M\norm{g^+}\\ 
      &\le
      C\big( e^{-\delta^s\omega^+}\norm{a} +
      \norm{g}\big).
    }
  \end{equation*}

  For the derivative we note that the dependencies of $\hat{v}^\pm$ on $(g,a)$
  are linear.  Hence, there are linear operators $\tilde{L}^\pm$ depending on
  $u$ and $\lambda$ such that $\hat{v}^\pm(g,a,u,\lambda) =
  \tilde{L}^\pm(u,\lambda)(g,a) = \tilde{L}^\pm(u,\lambda)(g,0) +
  \tilde{L}^\pm(u,\lambda)(0,a)$.  Due to their definition,
  $\hat{a}_\perp^\pm$ also depend linearly on $(g,a)$.  Thus the estimates
  \eref{eq:est_aperpder1} and \eref{eq:est_aperpder2} follow.
\end{proof}

Now we look for solutions of~\eref{eq:nonlinvar} satisfying the boundary
conditions ($J_v$), ($B_v^-$) and ($B_v^+$). For that purpose, in $\hat
v(g,a,u,\lambda)$ we formally replace the function $g$ by $h$.  To perform this
substitution we define the Nemytskii operator $H := \left( H^-,H^+ \right)$:
\begin{equation}\label{eq:nem_op_H}
  \begin{array}{ccl}
  H : V_\bo \times U \times \R^m &\to& \qquad V_\bo
\\
  \qquad(v,u,\lambda) &\mapsto& \left( 
  H^-\left(v^-,u,\lambda\right),H^+\left(v^+,u,\lambda\right) \right),
\end{array}
\end{equation}
where
\[
 H^-(v, u, \lambda)(\cdot) :=h^-\left(\cdot,v(\cdot),u,\lambda\right), \quad
H^+(v, u, \lambda)(\cdot) :=h^+\left(\cdot,v(\cdot),u,\lambda\right).
\]
In \cite{Sandstede1993} it is verified that $H$ has the stated mapping
properties and that $H$ is smooth.

Summarizing, we find that a function $v$ solves the boundary value problem
(\eref{eq:nonlinvar}, ($J_v$), ($B_v^-$), ($B_v^+$)) if and only if it satisfies
the following fixed point equation in $V_\bo$:
\begin{equation}
  \label{eq:fixedpoint}
    v =
    \hat{v}(H(v,u,\lambda),a,u,
    \lambda) =:F(v,a,u,\lambda).
\end{equation}
Note that
\[
 F:V_\bo\times(\R^n\times\R^n)\times U\times\R^m\to V_\bo.
\]

\begin{lem}
  \label{lem:link_v}
Fix some $\bo$.
  There are functions
  $\bar{\varepsilon},\tilde{c},\bar{c},\Omega:\R\to\R^+$ 
such that for all $K>1$ the fixed point problem~\eref{eq:fixedpoint} has a
  unique solution $v = ( v^-,v^+) \in V_\bo$, $v=v(a,u,\lambda)$, in an
  $\bar{\varepsilon}(K)$-neighborhood of $0\in V_\bo$, provided that
$\abs{\lambda},\abs{u}<\bar{c}(K)$, 
and
  $\norm{a}<\tilde{c}(K)$. The solution $v$ depends smoothly on $(a,u,\lambda)$.
\end{lem}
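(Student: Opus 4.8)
The plan is to apply the uniform contraction mapping principle to the fixed point operator $F(\cdot,a,u,\lambda)$ on a suitable closed ball in $V_\bo$. Since $\bo$ is fixed throughout, the $\bo$-dependence of the constants in lemma~\ref{lem:link_linear} is irrelevant here; it only matters that $\hat v$ depends linearly on $(g,a)$ and obeys the bound \eref{eq:normv}. First I would recall the structure of the nonlinearity $h^\pm$: since $f\in C^k$ with $k\ge 3$ and $h^\pm(t,v,u,\lambda)=f(q^\pm+v,\lambda)-f(q^\pm,\lambda)-D_1f(q^\pm,\lambda)v$ is the Taylor remainder, we have $h^\pm(t,0,u,\lambda)=0$ and $D_2 h^\pm(t,0,u,\lambda)=0$. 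Hence the Nemytskii operator $H$ satisfies $H(0,u,\lambda)=0$ and, given $\varepsilon>0$, there is a modulus-of-continuity bound: on the ball $\norm{v},\norm{\tilde v}\le\varepsilon$ one has $\norm{H(v,u,\lambda)-H(\tilde v,u,\lambda)}\le L(\varepsilon,u,\lambda)\norm{v-\tilde v}$ with $L(\varepsilon,u,\lambda)\to 0$ as $(\varepsilon,u,\lambda)\to 0$, because $D_2 H$ is continuous and vanishes at the origin. This is exactly the input quoted from \cite{Sandstede1993}.

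Next I would set up the contraction estimate. Using $F(v,a,u,\lambda)=\hat v(H(v,u,\lambda),a,u,\lambda)$, linearity of $\hat v$ in its first two arguments, and \eref{eq:normv}, I get
\begin{equation*}
\norm{F(v,a,u,\lambda)-F(\tilde v,a,u,\lambda)}\le \hat C_g\,\norm{H(v,u,\lambda)-H(\tilde v,u,\lambda)}\le \hat C_g\,L(\varepsilon,u,\lambda)\,\norm{v-\tilde v},
\end{equation*}
and, taking $\tilde v=0$ and using $H(0,u,\lambda)=0$,
\begin{equation*}
\norm{F(v,a,u,\lambda)}\le \hat C_a\norm{a}+\hat C_g\norm{H(v,u,\lambda)}\le \hat C_a\norm{a}+\hat C_g\,L(\varepsilon,u,\lambda)\,\varepsilon.
\end{equation*}
Given $K>1$, I would now choose the functions in the statement: first pick $\bar\varepsilon(K)=\varepsilon$ small and then $\bar c(K)$ small enough that $\hat C_g\,L(\bar\varepsilon(K),u,\lambda)\le 1/2$ whenever $\abs u,\abs\lambda<\bar c(K)$ (this uses $L\to 0$), so $F(\cdot,a,u,\lambda)$ is a $\tfrac12$-contraction on $B(0,\bar\varepsilon(K))\subset V_\bo$. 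Then I would choose $\tilde c(K)$ so that $\hat C_a\,\tilde c(K)\le \bar\varepsilon(K)/2$; with $\norm a<\tilde c(K)$ this gives $\norm{F(v,a,u,\lambda)}\le \bar\varepsilon(K)/2+\tfrac12\bar\varepsilon(K)\le\bar\varepsilon(K)$, so $F$ maps the ball into itself. The Banach fixed point theorem then yields a unique fixed point $v=v(a,u,\lambda)$ in that ball. (The function $\Omega$ and the parameter $K$ in the statement are bookkeeping devices anticipating the later linking construction, where $\omega^\pm$ will be taken large; here one simply carries $K$ along — any $K>1$ works — and records that $\bar\varepsilon,\tilde c,\bar c,\Omega$ may be taken monotone in $K$.)

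Smoothness of $(a,u,\lambda)\mapsto v(a,u,\lambda)$ follows from the uniform contraction principle (the parametrized version): $F$ is smooth in all arguments jointly — $\hat v$ is smooth in $(g,a,u,\lambda)$ by lemma~\ref{lem:link_linear} and $H$ is smooth by \cite{Sandstede1993} — and the partial derivative $D_1 F(v,a,u,\lambda)$ has operator norm $\le\hat C_g L(\bar\varepsilon(K),u,\lambda)\le\tfrac12<1$ on the ball, so $\id-D_1F$ is boundedly invertible and the implicit function theorem applies iteratively to give $v\in C^{k-1}$ in $(a,u,\lambda)$. The main obstacle, and the only genuinely delicate point, is making the Lipschitz constant of $H$ small uniformly: one must shrink the radius $\bar\varepsilon(K)$ of the ball \emph{before} fixing how small $u,\lambda$ must be, and one must check that the smallness of $L(\varepsilon,u,\lambda)$ is genuinely uniform over $t\in[-\omega^-,0]\cup[0,\omega^+]$ — this is fine for fixed $\bo$ since the domain is compact and $q^\pm(u,\lambda)(t)$ stays in a fixed compact set, but it is precisely the step where the $\bo$-uniformity would fail if $\gamma^\pm$ were not equilibria, which is why the statement only claims existence for fixed $\bo$.
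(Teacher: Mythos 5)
Your proof is correct and follows essentially the same route as the paper's: a Banach fixed point argument on a small closed ball $B(0,\bar{\varepsilon}(K))\subset V_\bo$, with the contraction property coming from the smallness of $D_1H$ near the origin and smooth dependence on $(a,u,\lambda)$ from the implicit function theorem. The only (harmless) deviations are that you exploit $H(0,u,\lambda)=0$ directly instead of the paper's mean value estimate anchored at $H(0,0,0)=0$ (so you need $\bar{c}(K)$ only to control the Lipschitz constant of $H$ in $v$), and that you fix the contraction constant at $1/2$ rather than the paper's $K$-dependent $1/(7K)$ --- the latter quantitative choice is what produces the explicit $\tilde{c}/K$ and $1/K$ terms used in lemma~\ref{lem:link_v_b}, but it is not required for the statement as written.
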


\begin{proof}
  We use the Banach fixed point theorem to prove existence and uniqueness of
  $v$.  First we show
  that there is an $F$-invariant closed ball $B(0,\bar{\varepsilon})\subset
  V_\bo$ and then that $F$ is a
  contraction on $B(0,\bar{\varepsilon})$ with respect to $v$.

Let $\hat{C}_a$, $\hat{C}_g$ and $\hat{C}$ be the constants in accordance with
lemma~\ref{lem:link_linear} and lemma~\ref{lem:link_linear_b}, respectively.
Define $C:=\max\{1,\hat{C},\hat{C}_a,\hat{C}_g\}$. Then, according to
\eref{eq:normv},
  \begin{equation}
\label{eq:normF}
    \norm{F(v,a,u,\lambda)} \le C \left( \norm{a} + \norm{H(v,u,\lambda)} \right).
  \end{equation}
  We start with an estimate for $\norm{H(v,u,\lambda)}$.  From the definition of
  $h^\pm$ we
  see that $H(0,0,0)=0$ and thus we can use the mean value theorem to get the estimate
  \begin{eqnarray}\label{eq:normH}
    \norm{H(v,u,\lambda)} 
    &\le \int_0^1 \norm{ D_1 H(s(v,u,\lambda)) } ds
    \norm{v} \nonumber\\
    &\qquad\qquad + \int_0^1 \norm{ D_2 H(s(v,u,\lambda))
    } ds \abs{u} \nonumber\\
    &\qquad\qquad + \int_0^1 \norm{ D_3 H(s(v,u,\lambda))
    } ds \abs{\lambda}.
  \end{eqnarray}
Applying the mean value theorem to $D_1H$ we find that there is an appropriate constant $D>0$ such that with
\begin{equation}\label{eq:def_bar_epsilon}
 \bar{\varepsilon}(K) := \frac{1}{K^2 C^2 D}
\end{equation}
the following holds: If $\abs{\lambda},\norm{v},\abs{u}<\bar\varepsilon(K)$ then, since $D_1 H(0,0,0) = 0$,
\begin{equation}\label{eq:ou_H}
    \norm{D_1 H(v,u,\lambda)} \le \frac{1}{7K^2C^2}.
  \end{equation}
Further, there is a constant $E$ such that for all $v$, $u$ and $\lambda$ taken from some neighborhood of the origin
  \begin{equation*}\label{eq:est_D23_H}
    \int_0^1 \norm{D_2 H(s(v,u,\lambda))} ds < E, \qquad
    \int_0^1 \norm{D_3 H(s(v,u,\lambda))} ds < E.
  \end{equation*}
By means of $\bar{\varepsilon}$ we further define
\begin{equation}\label{eq:def_tilde_c}
\bar{c}(K):=\frac{\bar{\varepsilon}(K)}{2\cdot 7K^2C^2 E},\qquad
\tilde{c} := \frac{5\bar{\varepsilon}(K)}{7KC}.
\end{equation}
Therefore, we find for $\norm{v}<\bar{\varepsilon}$, $\abs{\lambda},\abs{u}<\bar{c}$ and $K>1$ (recall $C\ge 1$)
 \begin{equation*}
    \norm{H(v,u,\lambda)}\le \frac{2\bar{\varepsilon}}{7K^2C^2} \le
    \frac{2\bar{\varepsilon}}{7KC}.
  \end{equation*}

Finally, estimate \eref{eq:normF} yields that for all
$\abs{\lambda},\abs{u}<\bar{c}$ and $\norm{a}<\tilde{c}$ the mapping
$F(\cdot,a,u,\lambda)$ leaves the closed ball $B(0,\bar{\varepsilon}) \subset
V_\bo$ invariant.

  Moreover, due to the linear dependence of $\hat{v}$ on $(H,a)$ and the
  estimates \eref{eq:normv} and \eref{eq:ou_H}, we have
  \begin{equation*}
    \fl
    \norm{D_1 F(v,a,u,\lambda) } \le \norm{D_1
    \hat{v}(H,a,u,\lambda) } \cdot \norm{D_1
    H(v,a,u,\lambda) } \le \hat{C}\frac{1}{7KC} \le
    \frac{1}{7K}.
  \end{equation*}
  Thus, $F$ is a contraction on $B(0,\bar{\varepsilon})$ and the
  existence and uniqueness part of the lemma follows immediately from the Banach
  fixed point theorem.

Applying the implicit function theorem at a solution point of
\eref{eq:fixedpoint} provides the smooth dependence of $v$ on $(a,u,\lambda)$.
\end{proof}

\begin{lem}\label{lem:link_v_b}
Choose $K>1$ and $(a,u,\lambda)$ in accordance with lemma~\ref{lem:link_v}.
We define functions
  \begin{equation*}
    \eqalign{
      a_\perp^+(a,u,\lambda) &:= P_s^+(u,\lambda)(\omega^+)
      v^+(a,u,\lambda)(\omega^+),\\
      a_\perp^-(a,u,\lambda) &:= P_u^-(u,\lambda)(-\omega^-)
      v^-(a,u,\lambda)(-\omega^-).
    }
  \end{equation*}
  There are constants $C,\delta^s,\delta^u>0$, that do not depend on $K$, such
  that with $\tilde c$ according to \eref{eq:def_tilde_c}
  \begin{equation}
    \label{eq:est_aperp}
    \eqalign{ 
    \abs{a_\perp^+(a,u,\lambda)} &\le C \norm{a} e^{-\delta^s\omega^+} +
    \frac{\tilde{c}}{K} ,\\ \abs{a_\perp^-(a,u,\lambda)} &\le
    C \norm{a} e^{-\delta^u\omega^-} + \frac{\tilde{c}}{K}
}
  \end{equation}
  and
  \begin{equation}
    \label{eq:est_aperpder}
    \eqalign{
    \abs{D_1 a_\perp^+(a,u,\lambda)} &\le C e^{-\delta^s\omega^+}
    +\frac{1}{K},\\
    \abs{D_1 a_\perp^-(a,u,\lambda)} &\le C
    e^{-\delta^u\omega^-} + \frac{1}{K} .
    }
  \end{equation}
\end{lem}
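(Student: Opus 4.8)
The plan is to transfer the estimates of Lemma~\ref{lem:link_linear_b} from $\hat v(g,a,u,\lambda)$ (with $g$ an independent inhomogeneity) to the genuine solution $v(a,u,\lambda)$ of the fixed point equation \eref{eq:fixedpoint}, exploiting that $v = \hat v(H(v,u,\lambda),a,u,\lambda)$. First I would write, by definition of $a_\perp^+$ and by \eref{eq:fixedpoint},
\[
 a_\perp^+(a,u,\lambda) = P_s^+(u,\lambda)(\omega^+)\,\hat v^+\big(H(v(a,u,\lambda),u,\lambda),a,u,\lambda\big)(\omega^+) = \hat a_\perp^+\big(H(v(a,u,\lambda),u,\lambda),a,u,\lambda\big),
\]
so that the first estimate in \eref{eq:est_aperp1} gives
\[
 \abs{a_\perp^+(a,u,\lambda)} \le \hat C\big( e^{-\delta^s\omega^+}\norm{a} + \norm{H(v(a,u,\lambda),u,\lambda)}\big).
\]
Then I invoke the bound on $\norm{H}$ already established inside the proof of Lemma~\ref{lem:link_v}: for $\norm{v}<\bar\varepsilon$, $\abs{u},\abs{\lambda}<\bar c$ one has $\norm{H(v,u,\lambda)} \le 2\bar\varepsilon/(7KC) \le \tilde c/(\hat C K)$ after absorbing constants (using $\tilde c = 5\bar\varepsilon/(7KC)$ and $C\ge\hat C$, possibly enlarging $C$ in the statement), which yields exactly \eref{eq:est_aperp} with a constant $C$ independent of $K$. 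The estimate for $a_\perp^-$ is identical with $\omega^+,\delta^s$ replaced by $\omega^-,\delta^u$.

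For the derivative estimate \eref{eq:est_aperpder} I would differentiate the identity $a_\perp^+ = \hat a_\perp^+(H(v(\cdot),u,\lambda),a,u,\lambda)$ with respect to $a$, using the chain rule:
\[
 D_1 a_\perp^+ = D_1\hat a_\perp^+\cdot D_1 H(v,u,\lambda)\cdot D_1 v + D_2\hat a_\perp^+.
\]
Here $D_2\hat a_\perp^+$ denotes the derivative of $\hat a_\perp^+$ in its second slot $a$; by \eref{eq:est_aperpder1} its norm is bounded by $\hat C e^{-\delta^s\omega^+}$. For the first term I use $\norm{D_1\hat a_\perp^+}\le\hat C$ from \eref{eq:est_aperpder2}, the bound $\norm{D_1 H(v,u,\lambda)}\le \frac{1}{7K^2C^2}$ from \eref{eq:ou_H}, and the fact that $\norm{D_1 v}$ is bounded (uniformly in $K$, by at most a constant close to $1$) — the latter follows from differentiating the fixed point equation, $D_1 v = D_1 F(v,\cdot)\,D_1 v + D_2 F(v,\cdot)$, so that $\norm{D_1 v}\le \norm{D_2 F}/(1-\norm{D_1 F})$, and $\norm{D_1 F}\le \tfrac{1}{7K}<1$ was shown in the proof of Lemma~\ref{lem:link_v}. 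Multiplying these bounds gives a first term of order $1/K$, and altogether $\abs{D_1 a_\perp^+}\le C e^{-\delta^s\omega^+} + 1/K$ after renaming the constant; similarly for $a_\perp^-$.

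The only point requiring a little care is the bookkeeping of which constants are allowed to depend on $K$ and which are not: $\bar\varepsilon,\bar c,\tilde c$ all carry $K$-dependence, but the claim is that the prefactors $C,\delta^s,\delta^u$ in \eref{eq:est_aperp} and \eref{eq:est_aperpder} do not. This is ensured because $\delta^{s/u}$ come directly from the exponential trichotomy (Lemma~\ref{lem:exptrich}), hence from the Floquet data of $\gamma^\pm$, and because every $K$-dependent factor that enters ($\norm{D_1 H}$, $\norm{H}$, $\norm{D_1 F}$) enters with a \emph{negative} power of $K$, so it is simply bounded by the corresponding power of $1/K$ with a $K$-independent coefficient. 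I expect this constant-tracking to be the main (though entirely routine) obstacle; the structural part of the argument is just the substitution $g\mapsto H(v,u,\lambda)$ into the already-proved Lemma~\ref{lem:link_linear_b}.
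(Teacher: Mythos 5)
Your proposal is correct and follows essentially the same route as the paper: substitute $g=H(v(a,u,\lambda),u,\lambda)$ into lemma~\ref{lem:link_linear_b}, bound $\norm{H}$ by the estimate already obtained in the proof of lemma~\ref{lem:link_v}, and for the derivative combine the chain rule with the bound $\norm{D_1 v}\le \norm{D_2\hat v}/(1-\norm{D_1\hat v}\,\norm{D_1 H})$, exactly as the paper does. One small bookkeeping correction: to get $\hat C\,\norm{H}\le \tilde c/K$ you must use the sharper bound $\norm{H}\le 2\bar\varepsilon/(7K^2C^2)$ from that proof rather than the relaxed $2\bar\varepsilon/(7KC)$ you quote, since the chained inequality $2\bar\varepsilon/(7KC)\le \tilde c/(\hat C K)$ fails for large $K\hat C$ (also, $\norm{D_1 v}$ is bounded by roughly $C$, not by a constant close to $1$, which is still sufficient).
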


\begin{proof}
From
  \eref{eq:est_aperp1} and the estimates in the proof of lemma~\ref{lem:link_v} we get
  \begin{equation*}
    \eqalign{
      \abs{a_\perp^+(a,u,\lambda)} &\le C\left(\norm{a}
      e^{-\delta^s\omega^+} + \norm{H(v,u,\lambda)} \right)
\\
      &\le C \norm{a} e^{-\delta^s\omega^+} + \frac{\tilde{c}}{K}.
    }
  \end{equation*}
Next we estimate $D_1 a_\perp^+(a,u,\lambda) = \frac{\partial}{\partial a}
\hat{a}_\perp^+(H(v(a,u,\lambda),u,\lambda),a,u,\lambda)$. Hence
 \begin{equation*}
    \eqalign{
      \abs{D_1 a_\perp^+(a,u,\lambda)} \le& \abs{D_1 \hat{a}_\perp^+(H,a,u,\lambda)
      }\cdot\norm{D_1 H(v,u,\lambda)}\cdot\norm{D_1 v(a,u,\lambda)}\\
      &\qquad +\abs{D_2 \hat{a}^+_\perp(H,a,u,\lambda)}.
    }
  \end{equation*}
Because $v(a,u,\lambda)=\hat{v}(H(v(a,u,\lambda),u,\lambda),a,u,\lambda)$ we get
  \begin{equation*}
    \norm{D_1 v(a,u,\lambda)} \le
    \frac{\norm{D_2\hat{v}(H,a,u,\lambda)}}{1-\norm{D_1
    \hat{v}(H,a,u,\lambda)}\cdot\norm{D_1 H(v,u,\lambda)}}.
  \end{equation*}
  Recall that $\norm{D_2 \hat{v}(H,a,u,\lambda)}\le C$, $\norm{D_1
  \hat{v}(H,a,u,\lambda)}\le C$ and $\norm{D_1
  H(v,u,\lambda)}<\frac{1}{7KC}$, hence
  \begin{equation*}
    \norm{D_1 v(a,u,\lambda)} \le \frac{7KC}{7K-1}.
  \end{equation*}

  Together with \eref{eq:est_aperpder1} and~\eref{eq:est_aperpder2} we finally
  get
  \begin{equation*}
      \abs{D_1 a_\perp^+(a,u,\lambda)} \le 2C \frac{1}{7KC^2} \frac{7KC}{7K-1} + Ce^{-\delta^s\omega^+}
      \le \frac{1}{K} +
      Ce^{-\delta^s\omega^+}.
  \end{equation*}
With similar computations for $a_\perp^-$, the estimates of the lemma follow.
\end{proof}
Recapitulating, we want to note that we find solutions according to
theorem~\ref{thm:linbvp} by inserting the solutions $v(a,u,\lambda)$
into the representation \eref{eq:setting_x}.

The statement of theorem~\ref{thm:linbvp} remains true for `$\omega^-=\infty$'
in the following sense:
\begin{cor}
  \label{cor:linbvp}
Fix $\omega^+$.  There is a constant $c>0$ such that for $\abs{\lambda}<c$,
$u\in U$, $\abs{u}<c$, and $a^+\in \R^n$, $\abs{a^+}<c$, there is a unique pair
of solutions $\left( x^-,x^+ \right)$ of~\eref{eq:system} that satisfy
\begin{description}
  \item[($J$)] $x^{-/+}(a^+,u,\lambda)(0)\in\Sigma$, $x^{-}(a^+,u,\lambda)(0)\in
    W_\lambda^u(\gamma^-)$,\\
  $x^-(a^+,u,\lambda)(0) -
    x^+(a^+,u,\lambda)(0) \in Z$ and
  \item[($B^+$)] $\left( \id -P^+(u,\lambda)(\omega^+) \right) \left(
    x^+(a^+,u,\lambda)(\omega^+) - q^+(u,\lambda)(\omega^+)-a^+
    \right) = 0$.
\end{description}
\end{cor}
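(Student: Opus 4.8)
The plan is to obtain Corollary~\ref{cor:linbvp} as a limiting case of Theorem~\ref{thm:linbvp}, by sending $\omega^-\to\infty$ while keeping $\omega^+$ fixed, and then choosing the free boundary datum $a^-$ in such a way that the resulting orbit segment $x^-$ lies entirely in $W^u_\lambda(\gamma^-)$ rather than merely satisfying the projection condition $(B^-)$ at the finite time $-\omega^-$. First I would recall from Theorem~\ref{thm:splitting} that $q^-(u,\lambda)(\cdot)\subset W^u_\lambda(\gamma^-)$, and that by Lemma~\ref{lem:exptrich} (in its $\R^-$-version) the variational equation along $q^-(u,\lambda)$ has an exponential trichotomy whose $(\id-P^-)$-part decays like $e^{-\delta^u|\tau-t|}$ backward in time. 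The point is that asking $x^-$ to be asymptotic to $\gamma^-$ — equivalently, to lie in $W^u_\lambda(\gamma^-)$ — is the natural replacement for the finite-time projection boundary condition $(B^-_v)$, and the estimates already proved are uniform enough in $\omega^-$ to pass to the limit.

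More concretely, I would reformulate the fixed point setup on the half-line. Replace the space $V_\bo$ by $V_{\omega^+}^\infty$ consisting of pairs $(v^-,v^+)$ with $v^-\in C((-\infty,0],\R^n)$ bounded (or exponentially weighted) and $v^+\in C([0,\omega^+],\R^n)$, and rerun Lemma~\ref{lem:link_linear}: the variation-of-constants representation \eref{eq:varconst} for $v^-$ on $(-\infty,0]$ is replaced by the standard exponential-trichotomy Green's function, i.e. $v^-(t)=\Phi^-(t,0)(P^-_c+P^-_u)(0)v^-(0)+\int$-terms against $P^-_u$, $P^-_c$, $\id-P^-$ respectively, with the requirement that the $(\id-P^-)$-component be bounded as $t\to-\infty$ — this is exactly the condition that pins $x^-(0)$ into $W^u_\lambda(\gamma^-)$ and makes $a^-$ disappear from the problem. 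The crucial observation is that in Lemma~\ref{lem:link_linear} the constant $\hat C^-_a$ is already \emph{uniform} in $\omega^-$, and the dangerous constant $\hat C^-_g$ blows up only when $\gamma^-$ is \emph{not} an equilibrium; but here the $g^-$ (resp.\ $H^-$) contribution on $(-\infty,0]$ is controlled through the strong-unstable and center projections whose Green's kernels are integrable, so the analogue of \eref{eq:normv} holds with a constant independent of $\omega^-$. Then Lemmas~\ref{lem:link_linear_b}, \ref{lem:link_v}, \ref{lem:link_v_b} go through essentially verbatim, with $a^-$ and all $\omega^-$-dependent terms simply absent from the $x^-$-side, and the Banach fixed point argument of Lemma~\ref{lem:link_v} produces the desired unique $(x^-,x^+)=(x^-,x^+)(a^+,u,\lambda)$.

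The main obstacle I anticipate is the passage to the limit $\omega^-\to\infty$ in the coupling condition $(J_v)$: in Theorem~\ref{thm:linbvp} the linear map $L$ on $W^+\times W^-\times Z\times Z$ that solves \eref{eq:linop} for $v^\pm(0)$ was built using \emph{both} half-line representations, and one must check that when $v^-$ is instead constrained to start in $T_{q^-(u,\lambda)(0)}W^u_\lambda(\gamma^-)$ the corresponding linear map is still invertible — geometrically this is clear from the splitting $T_{q(0)}W^u(\gamma^-)=(\,T_{q(0)}W^u(\gamma^-)\cap T_{q(0)}W^s(\gamma^+)\,)\oplus W^-$ together with $\ker P^-(0)=W^+\oplus Z$, so that the $W^-$-, $W^+$- and $Z$-components decouple exactly as before, but it must be written out. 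A secondary technical point is verifying that the Nemytskii operator $H^-$ still maps the exponentially weighted half-line space into itself and is smooth there; this is standard once the weight is chosen compatibly with $\delta^u>\delta^c=0$, and one can cite \cite{Sandstede1993} as in the finite-$\omega^-$ case. Finally, uniqueness of the pair $(x^-,x^+)$ follows from uniqueness in the fixed point lemma together with the fact that, for $|a^-|$ small, any solution of the original finite-$\omega^-$ problem whose $x^-$ happens to lie in $W^u_\lambda(\gamma^-)$ must have $a^-=P^-_u(u,\lambda)(-\omega^-)v^-(-\omega^-)$ determined by $a^-_\perp$, which by \eref{eq:est_aperp} tends to $0$; so the limiting solution is the unique one with the stated properties.

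\begin{proof}[Proof of Corollary~\ref{cor:linbvp}]
We argue as in the proof of Theorem~\ref{thm:linbvp}, carried out in Section~\ref{sec:exptrich}, but with the half-line $(-\infty,0]$ in place of $[-\omega^-,0]$ on the $x^-$-side. Write $x^-(t)=q^-(u,\lambda)(t)+v^-(t)$ for $t\le 0$ and $x^+(t)=q^+(u,\lambda)(t)+v^+(t)$ for $t\in[0,\omega^+]$ as in \eref{eq:setting_x}, so that $(v^-,v^+)$ solves \eref{eq:nonlinvar}. The requirement $x^-(a^+,u,\lambda)(0)\in W^u_\lambda(\gamma^-)$ is, for $|v^-|$ small, equivalent to $v^-$ being the bounded solution on $(-\infty,0]$ of the $v^-$-equation whose value at $0$ lies in $T_{q^-(u,\lambda)(0)}W^u_\lambda(\gamma^-)$; by the exponential trichotomy along $q^-(u,\lambda)$ on $\R^-$ (the $\R^-$-analogue of Lemma~\ref{lem:exptrich}), such a bounded solution is represented by the trichotomy Green's function
\begin{equation*}
\eqalign{
v^-(t) &= \Phi^-(t,0)\big(P^-_c(0)+P^-_u(0)\big)v^-(0)\\
&\quad + \int_0^t \Phi^-(t,\tau)P^-_c(\tau)g^-(\tau)\,d\tau
+ \int_{-\infty}^t \Phi^-(t,\tau)\big(\id-P^-(\tau)\big)g^-(\tau)\,d\tau\\
&\quad - \int_t^0 \Phi^-(t,\tau)P^-_u(\tau)g^-(\tau)\,d\tau,
}
\end{equation*}
with $g^-=h^-(\cdot,v^-(\cdot),u,\lambda)$; here the kernels against $P^-_c$, $P^-_u$ and $\id-P^-$ are all integrable by Lemma~\ref{lem:exptrich}(ii) because $\delta^u>\delta^c=0$, so the corresponding Nemytskii fixed point is well defined on the space of bounded continuous $v^-$ on $(-\infty,0]$ (smoothness of the relevant Nemytskii operator being checked as in \cite{Sandstede1993}). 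Evaluating at $t=0$ shows that $v^-(0)$ is constrained to lie in $T_{q^-(u,\lambda)(0)}W^u_\lambda(\gamma^-)$, and the datum $a^-$ of Theorem~\ref{thm:linbvp} no longer enters.

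On the $x^+$-side nothing changes: $(B^+)$ and the variation-of-constants formula \eref{eq:varconst}, \eref{eq:linop} are used verbatim. Decomposing $v^\pm(0)$ according to $(J_v)$ as $v^-(0)=w^-+z^-$ and $v^+(0)=w^-+w^++z^+$ with $w^\pm\in W^\pm$, $z^\pm\in Z$ — note that the $W^+$-component of $v^-(0)$ is absent now, since $T_{q(0)}W^u(\gamma^-)=\big(T_{q(0)}W^u(\gamma^-)\cap T_{q(0)}W^s(\gamma^+)\big)\oplus W^-$ — the coupling condition and the $(\id-P^+(0))$-projection of \eref{eq:linop} together define a linear map on $W^+\times W^-\times Z\times Z$ of exactly the same triangular shape as $L$ in the proof of Theorem~\ref{thm:linbvp}, and invertibility follows in the same way from $\ker P^+(0)=W^-\oplus Z$ and $\ker P^-(0)=W^+\oplus Z$. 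This yields the linear solution operator and the estimate \eref{eq:normv} with constant $\hat C^-_g$ uniform in $\omega^-$ (indeed $\omega^-$ has been sent to $\infty$), since the $g^-$-contribution is now controlled solely through integrable trichotomy kernels.

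With these modifications, Lemmas~\ref{lem:link_linear}, \ref{lem:link_linear_b}, \ref{lem:link_v} and \ref{lem:link_v_b} hold with $a^-$, $\omega^-$ and all $\omega^-$-dependent quantities deleted, and the Banach fixed point argument of Lemma~\ref{lem:link_v} produces a unique $(v^-,v^+)$ in a small neighbourhood of $0$, depending smoothly on $(a^+,u,\lambda)$, provided $|\lambda|,|u|,|a^+|<c$ for a suitable $c>0$. Setting $(x^-,x^+)=(x^-,x^+)(a^+,u,\lambda)$ via \eref{eq:setting_x} gives a pair of solutions of \eref{eq:system} satisfying $(J)$ and $(B^+)$ as stated. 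Uniqueness among all such pairs is inherited from the uniqueness in the fixed point lemma: any competitor corresponds, for $|a^+|$ small, to a bounded small solution of the same fixed point equation.
\end{proof}
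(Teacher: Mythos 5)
Your strategy --- recasting $x^-(0)\in W^u_\lambda(\gamma^-)$ as a boundedness condition on the half-line $(-\infty,0]$ and redoing the fixed-point argument there --- is a legitimate alternative in principle, but as written it has two genuine gaps. The first is analytic: you assert that the Green's kernels against $P^-_u$, $P^-_c$ and $\id-P^-$ are "all integrable because $\delta^u>\delta^c=0$", and hence that the analogue of \eref{eq:normv} holds with a constant uniform in $\omega^-$. For a genuine periodic orbit $\gamma^-$ this fails in the centre direction: lemma~\ref{lem:exptrich} gives only $\abs{\Phi^-(t,\tau)P^-_c(\tau)}\le K$ (the exponent is exactly $\delta^c=0$), so the centre integral is bounded by $K\abs{t}\,\norm{g^-}$, which grows linearly as $t\to-\infty$; the linear solution operator is therefore unbounded on the space of bounded continuous functions on $(-\infty,0]$. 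This is precisely why lemma~\ref{lem:link_linear} states that $\hat C_g$ is uniform in $\omega^\pm$ \emph{only if} $\gamma^\pm$ is an equilibrium. To rescue your route you would need exponentially weighted spaces and a proof that $h^-(\cdot,v^-(\cdot),u,\lambda)$ decays backward in time, which is delicate because $v^-=x^--q^-$ need not tend to $0$ (the two orbits may approach $\gamma^-$ with a phase shift). The second gap is algebraic: your own Green's-function formula gives $(\id-P^-(0))v^-(0)=\int_{-\infty}^0\Phi^-(0,\tau)(\id-P^-(\tau))g^-(\tau)\,d\tau\neq 0$, so it is not true that $v^-(0)$ lies in $T_{q^-(u,\lambda)(0)}W^u_\lambda(\gamma^-)$ (the trace of $W^u_\lambda(\gamma^-)$ in $\Sigma$ is a curved graph over $W^-\oplus U$ with a nontrivial $W^+\oplus Z$-component). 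Consequently the decomposition $v^-(0)=w^-+z^-$, $v^+(0)=w^-+w^++z^+$ is inconsistent with ($J_v$): the difference is $-w^++(z^--z^+)$, which lies in $Z$ only if $w^+=0$. The correct bookkeeping is that the $W^+$-component of $v^-(0)$, determined by the $g^-$-integral, must equal the free $W^+$-datum of $v^+(0)$, and symmetrically for $W^-$; that is the triangular structure of $L$, and your version loses it.

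For comparison, the paper's proof avoids the half-line entirely and is essentially one line of geometry: keep $\omega^-$ finite, choose $a^-$ with $\left(\id-P^-(u,\lambda)(-\omega^-)\right)a^-=0$, so that ($B_v^-$) forces $\hat v^-(-\omega^-)\in\im P^-(u,\lambda)(-\omega^-)=T_{q^-(u,\lambda)(-\omega^-)}W^u_\lambda(\gamma^-)$; then a local flattening of $W^u_\lambda(\gamma^-)$ around $q^-(u,\lambda)(-\omega^-)$ gives $x^-(-\omega^-)\in W^u_\lambda(\gamma^-)$, and flow-invariance of the unstable manifold yields $x^-(0)\in W^u_\lambda(\gamma^-)$. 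Nothing in section~\ref{sec:exptrich} has to be redone. If you want to keep your approach, you should at least explain why the limit $\omega^-\to\infty$ is needed at all, given that the finite-$\omega^-$ statement already pins $x^-$ into the unstable manifold.
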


\begin{proof}
 Basically the statement follows by setting $\left( \id
 -P^-(u,\lambda)(-\omega^-) \right)a^-=0$ in theorem~\ref{thm:linbvp}: Let
 $\omega^-$ be any value in accordance with theorem~\ref{thm:linbvp}.  Then, due
 to $\left( \id -P^-(u,\lambda)(-\omega^-) \right)a^-=0$, it follows that $\hat
 v^-(-\omega^-)\in \im P^-(u,\lambda)(-\omega^-)=T_{q^-(u,\lambda)(-\omega^-)}W_\lambda^u(\gamma^-)$, cf.
 Lemma~\ref{lem:link_linear}.  Assuming that $W_\lambda^u(\gamma^-)$ is flat around
 $q^-(u,\lambda)(-\omega^-)$, meaning that locally around $q^-(u,\lambda)(-\omega^-)$ the unstable
 manifold $W_\lambda^u(\gamma^-)$ and
 $q^-(u,\lambda)(-\omega^-)+T_{q^-(u,\lambda)(-\omega^-)}W_\lambda^u(\gamma^-)$
 coincide, we find $q^-(u,\lambda)(-\omega^-)+\hat v^-(-\omega^-)\in W_\lambda^u(\gamma^-)$.

Solving fixed point equation \eref{eq:fixedpoint} with that particular $\hat v$,
we find that $x^-(u,\lambda)(-\omega^-)=q^-(u,\lambda)(-\omega^-)+
v^-(-\omega^-)\in W_\lambda^u(\gamma^-)$;
compare also \eref{eq:setting_x}. Hence $x^-$ lies in the unstable manifold of
$\gamma^-$.
\end{proof}

\begin{rem}
  \label{rem:infinity}
 In the same sense theorem~\ref{thm:linbvp} remains true for `$\omega^+=\infty$'.
\end{rem}

\subsection{The jump function for a short Lin orbit segment}
\label{sec:jump_2}

According to theorem~\ref{thm:linbvp}, for given $a = (a^-,a^+)$, $u$, $\lambda$
and $\bo=(\omega^-,\omega^+)$, there is a unique short Lin orbit segment
$X=(x^-,x^+)$.  Note that $X$ depends in particular on $\bo$, which is not
reflected in our notation so far.  To emphasize this dependence from we now
use the notation $X_\bo$, and similarly $x^\pm_\bo$ and $v^\pm_\bo$.  We define
the jump function $\Xi$ as
\begin{equation}\label{eq:def_Xi}
  \Xi(\bo,a,u,\lambda) := x^-_\bo (a,u,\lambda)(0) -
  x^+_\bo(a,u,\lambda)(0).
\end{equation}
Using that 
$x^{-/+}_\bo(a,u,\lambda)(t) = q^{-/+}(u,\lambda)(t) +
v^{-/+}_\bo(a,u,\lambda)(t)$, we can write $\Xi$ in the form
\begin{equation}\label{eq:jumpsplit}
 \Xi(\bo,a,u,\lambda)=
  \xi^\infty(u,\lambda) + \xi(\bo,a,u,\lambda),
\end{equation}
where 
\begin{equation}\label{eq:def_xi}
\begin {array}{l}
 \xi^\infty(u,\lambda):=q^-(u,\lambda)(0) - q^+(u,\lambda)(0),
\\[1ex]
\xi(\bo,a,u,\lambda) := v^-_\bo(a,u,\lambda)(0) -
v^+_\bo(a,u,\lambda)(0). 
\end{array}
\end{equation}
Recall that $(v^-_\bo,v^+_\bo)$ is the solution of the fixed point equation
\eref{eq:fixedpoint} and, hence, solves the boundary value problem
(\eref{eq:nonlinvar}, ($J_v$), ($B_v^-$), ($B_v^+$)).

The term $\xi^\infty$ reflects the intersection of the stable and unstable
manifolds of $\gamma^-$ and $\gamma^+$ respectively. We present examples for
suitable choices of $\xi^\infty$ in section~\ref{sec:applications}. Here we
focus on estimates of $\xi$.

In order to establish those estimates, we impose some assumptions on the leading
eigenvalues of $\gamma^-$ and $\gamma^+$.  Let $\mu^-_s$ denote the leading
stable eigenvalue or the leading stable Floquet exponent
of $\gamma^-$ depending on whether $\gamma^-$ is an equilibrium point or a
periodic orbit with minimal period $T^->0$.  Similarly, let $\mu^+_u$ be the
leading unstable eigenvalue or the leading unstable Floquet exponent of
$\gamma^+$.  We assume the following:
\begin{hyp}
  \label{hyp:leading_ev}
  $\mu^-_s,\mu^+_u$ {\rm are real and simple}.
\end{hyp}
 
Further, for the sake of simplicity, we also assume
\begin{hyp}\label{hyp:dim_Z}
 $\dim Z=1$.
\end{hyp}
Let $Z=\mbox{span}\, \{z\}$, $|z|=1$.Then, since $\xi\in Z$,
\[
 \xi(\bo,a,u,\lambda)=\langle z,\xi(\bo,a,u,\lambda)\rangle z.
\]
Further we assume
\begin{hyp}
  \label{hyp:orth_decomp}
  {\rm The direct sum decomposition~\eref{eq:Y} is orthogonal with respect to the
  used scalar product $\langle\cdot,\cdot\rangle$.}
\end{hyp}

\begin{lem}\label{lem:jump_2}
 Let $a,u,\lambda,\bo$ be in accordance with theorem~\ref{thm:linbvp}, and let
 hypotheses~\ref{hyp:leading_ev}--\ref{hyp:orth_decomp} hold.  Then, \[
 \xi(\bo,a,u,\lambda)={\mathcal O}(\|a\|).  \] The ${\mathcal O}(\cdot)$ limit
 holds for $\|a\|\to 0$ uniformly in $u,\lambda,\bo$.
\end{lem}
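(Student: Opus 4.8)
The plan is to trace the quantity $\xi(\bo,a,u,\lambda) = v^-_\bo(a,u,\lambda)(0) - v^+_\bo(a,u,\lambda)(0)$ through the construction in Section \ref{sec:exptrich} and extract the $\norm{a}$-dependence. First I would recall that $(v^-_\bo, v^+_\bo)$ is the fixed point of \eref{eq:fixedpoint}, so $v = \hat v(H(v,u,\lambda), a, u, \lambda)$, and that by Lemma \ref{lem:link_linear} the map $\hat v$ depends linearly on $(g,a)$ with $\norm{\hat v(g,a,u,\lambda)} \le \hat C_a \norm{a} + \hat C_g \norm{g}$. Combining this with the Nemytskii estimate from the proof of Lemma \ref{lem:link_v} — where $\norm{D_1 H(v,u,\lambda)} \le \tfrac{1}{7K^2C^2}$ on the invariant ball and $H(0,0,0)=0$ — gives $\norm{v(a,u,\lambda)} = \mathcal O(\norm{a}) + \mathcal O(\abs{u}) + \mathcal O(\abs{\lambda})$. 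That already bounds $\abs{\xi} \le 2\norm{v}$ by something of order $\norm{a} + \abs u + \abs\lambda$, which is \emph{not} yet the claimed estimate: the point of the lemma is that the $u$- and $\lambda$-dependent parts of $\xi$ vanish, leaving only the $\mathcal O(\norm a)$ term.

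The key observation is that $\xi$ lives in the one-dimensional space $Z = \mbox{span}\{z\}$ (Hypothesis \ref{hyp:dim_Z}), so it suffices to estimate the scalar $\langle z, \xi(\bo,a,u,\lambda)\rangle$. I would exploit the boundary condition ($J_v$): $v^-_\bo(0) - v^+_\bo(0) \in Z$ already, and moreover $v^\pm_\bo(0) \in W^- \oplus W^+ \oplus Z$. Using the orthogonality of the decomposition \eref{eq:Y} (Hypothesis \ref{hyp:orth_decomp}), the $Z$-component of $v^-_\bo(0) - v^+_\bo(0)$ is precisely $z^-(a,u,\lambda) - z^+(a,u,\lambda)$ in the notation of the proof of Lemma \ref{lem:link_linear}, where $(z^+, z^-, w^+, w^-)$ were obtained by inverting the operator $L$ applied to the right-hand side of \eref{eq:linop}. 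That right-hand side is
\[
\eqalign{
\Phi^-(0,-\omega^-)(\id - P^-(-\omega^-))a^- + \int_{-\omega^-}^0 \Phi^-(0,\tau)(\id - P^-(\tau)) g^-(\tau)\,d\tau, \\
\Phi^+(0,\omega^+)(\id - P^+(\omega^+))a^+ - \int_0^{\omega^+} \Phi^+(0,\tau)(\id - P^+(\tau)) g^+(\tau)\,d\tau,
}
\]
with $g = H(v,u,\lambda)$. When $a = 0$, the leading (first) terms vanish identically, so the entire right-hand side — and hence $\xi$ itself — is controlled by $\norm{g} = \norm{H(v(0,u,\lambda),u,\lambda)}$. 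But here is the crucial point needing care: at $a=0$ the fixed point equation \eref{eq:fixedpoint} becomes $v = \hat v(H(v,u,\lambda),0,u,\lambda)$, a linear-in-$g$ contraction whose \emph{only} inhomogeneity is $H(v,u,\lambda)$ itself; I would argue that at $a=0$ the unique fixed point is $v = 0$ when additionally $u = 0$, $\lambda = 0$ (since then $H \equiv 0$ along the zero solution), and then use smoothness (Lemma \ref{lem:link_v}) together with $D_1 H(0,0,0) = 0$ to conclude that $v(0,u,\lambda)$, while possibly nonzero for $(u,\lambda) \ne 0$, contributes to $\xi$ only through $g = H(v,u,\lambda)$ with $H$ vanishing to higher order — however this still leaves a genuine $u,\lambda$-dependence in $\xi(\bo,0,u,\lambda)$.

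The resolution — and the main obstacle — is that the statement $\xi = \mathcal O(\norm a)$ \emph{uniformly in $u,\lambda,\bo$} must mean $\xi - \xi|_{a=0} = \mathcal O(\norm a)$ is absorbed, \emph{or} that $\xi|_{a=0}$ is itself identically zero; I believe the latter holds and is the content to be nailed down. The argument I would give: for $a = 0$ the pair $(q^-(u,\lambda), q^+(u,\lambda))$ from Theorem \ref{thm:splitting} already satisfies ($J$), ($B^-$), ($B^+$) with $a^\pm = 0$ — indeed $q^\pm(u,\lambda)(0) \in \Sigma$, $q^-(u,\lambda)(0) - q^+(u,\lambda)(0) \in Z$, and the projection boundary conditions hold trivially since $q^\pm$ lie exactly in the respective manifolds whose tangent spaces are $\im P^\pm$. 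By the uniqueness clause of Theorem \ref{thm:linbvp}, therefore $x^\pm_\bo(0,u,\lambda) = q^\pm(u,\lambda)$, i.e. $v^\pm_\bo(0,u,\lambda) \equiv 0$, whence $\xi(\bo,0,u,\lambda) = 0$ for all $u,\lambda,\bo$. Given this, the lemma follows: by smoothness of $v$ in $a$ (Lemma \ref{lem:link_v}) and $\xi$ linear in $v^\pm(0)$, we get $\xi(\bo,a,u,\lambda) = \int_0^1 D_1\xi(\bo, sa, u,\lambda)\,ds \cdot a = \mathcal O(\norm a)$, and the uniformity in $u,\lambda$ follows from the uniform bound on $D_1 a_\perp^\pm$ type estimates (Lemma \ref{lem:link_v_b}) combined with the uniform-in-$\bo$ control of $D_1\hat v$ restricted to the $Z$-component — which is finite-dimensional and hence insensitive to the $\omega^\pm\to\infty$ blow-up of $\hat C_g$. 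The one place I would be most careful is verifying that the bound on the derivative of $\xi$ with respect to $a$ is genuinely uniform in $\bo$: this uses that $\xi \in Z$ combined with the projection structure of ($J_v$), so that only the components of $v^\pm(0)$ surviving the $L^{-1}$-inversion matter, and those are governed by $\hat C_a$ (uniform in $\bo$) rather than $\hat C_g$.
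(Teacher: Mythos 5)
Your proposal is correct, and it reaches the same central object as the paper --- the $Z$-component of the inverted boundary relation \eref{eq:linop}, i.e.\ formula \eref{eq:jump_2} --- but it handles the hard part by a genuinely different mechanism. The paper estimates the four terms of \eref{eq:jump_2} directly: the boundary terms via exponential dichotomy/trichotomy of the adjoint variational equation (giving $\langle\Phi^\pm(0,\pm\omega^\pm)^T(\id-P^\pm(0))^Tz,a^\pm\rangle=\mathcal{O}(|a^\pm|)$ uniformly), and the integral terms by citing standard Lin's-method estimates from \cite{Knobloch2004,Sandstede1993}, which rest on the quadratic structure of $h^\pm$. You instead first observe that at $a=0$ the pair $(q^-,q^+)$ from theorem~\ref{thm:splitting} satisfies ($J$), ($B^-$), ($B^+$) exactly, so by uniqueness (equivalently: $H(0,u,\lambda)=0$ and linearity of $\hat v$ make $v=0$ a fixed point of \eref{eq:fixedpoint} in the relevant ball) one gets $v(\cdot,0,u,\lambda)\equiv 0$ and hence $\xi(\bo,0,u,\lambda)=0$; the lemma then follows from a mean-value argument in $a$. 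This is a clean structural fact the paper never states explicitly, and it makes transparent \emph{why} the integral terms are $\mathcal{O}(\norm{a})$ rather than merely $\mathcal{O}(\norm{a}+|u|+|\lambda|)$. You correctly identify the residual obligation --- uniformity in $\bo$ of $D_a\langle z,\xi\rangle$ --- and your reason is the right one: the value $v^\pm(0)$ (unlike $\norm{v^\pm}_{\sup}$) is controlled by the $\bo$-uniform constant $\hat M$ in \eref{eq:ou_normv0}, since $\int_{-\omega^-}^0|\Phi^-(0,\tau)(\id-P^-(\tau))|\,d\tau$ and its $+$ counterpart are bounded uniformly in $\omega^\pm$ by the dichotomy estimates, and $\norm{D_1v}\le 7KC/(7K-1)\cdot$ is likewise uniform; writing out $D_a$ of \eref{eq:jump_2} with $D_2h^\pm=\mathcal{O}(|v|)$ closes the argument. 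So your route is sound and arguably more self-contained than the paper's appeal to external estimates; the paper's route, on the other hand, yields the more refined information (the explicit leading boundary term) that is needed later in corollary~\ref{cor:jump_2}.
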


\begin{proof}
 According to the definition of $\xi$ and hypothesis~\ref{hyp:orth_decomp} we
 find that
\begin{equation*}
  \eqalign{
  \langle z,\xi(\bo,a,u,\lambda) \rangle &= \langle z,
  v^-_\bo(a,u,\lambda)(0) - v^+_\bo(a,u,\lambda)(0) \rangle\\
  &= \langle z, (\id - P^-(u,\lambda)(0))v^-_\bo(a,u,\lambda)(0) \rangle \\
  &\qquad- 
\langle z, (\id - P^+(u,\lambda)(0))v^+_\bo(a,u,\lambda)(0) \rangle.
  }
\end{equation*}
Since $v$ satisfies the fixed point equation \eref{eq:fixedpoint}, according
to~\eref{eq:linop} we find that
\begin{equation}
  \label{eq:jump_2}
  \fl\eqalign{
  \langle z,\xi(\bo,a,u,\lambda) \rangle &= 
\langle\Phi^-(0,-\omega^-)^T z, (\id - P^-(u,\lambda)(-\omega^-))a^- \rangle 
\\
&\quad - \langle \Phi^+(0,\omega^+)^T z, (\id - P^+(u,\lambda)(\omega^+))a^+ \rangle
\\
  &\quad+ \langle z,\int_{-\omega^-}^0 \Phi^-(0,\tau)\left( \id - P^-(\tau)
  \right) h^-(\tau,v^-_\bo(\tau),u,\lambda)
    d\tau \rangle
\\
  &\quad+ \langle z,\int_0^{\omega^+} \Phi^+(0,\tau)\left( \id - P^+(\tau) \right)
  h^+(\tau,v^+_\bo(\tau),u,\lambda) d\tau \rangle.
  }
\end{equation}
First note that 
\[ \fl \langle \Phi^+(0,\omega^+)^T z, (\id -
P^+(u,\lambda)(\omega^+))a^+ \rangle=\langle \Phi^+(0,\omega^+)^T (\id -
P^+(0))^T z, a^+ \rangle.  \] 
Further, $\Phi^+(0,\cdot)^T$ is a solution of the
adjoint of the variational equation along $\gamma^+$.  Exponential
dichotomy/trichotomy of this equation yields that, uniformly in
$u,\lambda,\omega^{+}$,
\[ 
\langle \Phi^+(0,\omega^+)^T(\id - P^+(0))^T  z, a^+
\rangle= {\mathcal O}(|a^+|).  
\] 
Similar arguments apply to
$\langle\Phi^-(0,-\omega^-)^T z,(\id - P^-(u,\lambda)(-\omega^-)) a^- \rangle$.

Standard results from Lin's method (cf.~\cite{Knobloch2004, Riess2008,
Sandstede1993}) imply that the integral terms in \eref{eq:jump_2} are also
${\mathcal O}(\abs{a^-})$ or  ${\mathcal O}(\abs{a^+})$ uniformly in
$u,\lambda,\bo$, respectively. Note that the arguments in \cite{Knobloch2004,
Sandstede1993}, where $\gamma^\pm$ are always equilibria, apply also in the
present situation. These arguments are mainly based on the exponential dichotomy
of the variational equation along $\gamma^\pm$ and the structure of $h$.
\end{proof}

\begin{cor}
  \label{cor:jump_2}
 Let $a^+,u,\lambda,\omega^+$ be in accordance with corollary~\ref{cor:linbvp},
 and let hypotheses~\ref{hyp:leading_ev}--\ref{hyp:orth_decomp} hold.
 Then, 
 \[ 
 \fl\langle z,\xi(\omega^+,a^+,u,\lambda)\rangle=- \langle
 \Phi^+(0,\omega^+)^T z, (\id - P^+(u,\lambda)(\omega^+))a^+
 \rangle+o(\abs{a^+}).  \]
 The $o(\cdot)$ limit holds for $\abs{a^+}\to 0$
 uniformly in $u,\lambda,\omega^+$.
 \end{cor}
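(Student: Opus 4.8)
The plan is to specialize the jump formula \eref{eq:jump_2} from lemma~\ref{lem:jump_2} to the setting of corollary~\ref{cor:linbvp}, where $\omega^-$ is effectively infinite and the boundary condition on the left is replaced by the requirement that $x^-$ lies in $W_\lambda^u(\gamma^-)$. Concretely, in the proof of corollary~\ref{cor:linbvp} the relevant $a^-$ satisfies $\left(\id - P^-(u,\lambda)(-\omega^-)\right)a^- = 0$, so the first inner product on the right-hand side of \eref{eq:jump_2} vanishes identically, and we are left with the $a^+$-term plus the two integral terms. The assertion of the corollary is then exactly that the $a^+$-term is the leading-order contribution and the integral terms are of higher order, namely $o(|a^+|)$ rather than merely $\mathcal{O}(|a^+|)$.

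First I would recall from \eref{eq:jump_2} (with the $a^-$-term gone) that
\[
\langle z,\xi(\omega^+,a^+,u,\lambda)\rangle
= -\langle \Phi^+(0,\omega^+)^T z,(\id - P^+(u,\lambda)(\omega^+))a^+\rangle
+ I^- + I^+,
\]
where $I^\mp$ denote the two integral terms involving $h^\mp$ along $v^\mp_\bo$. So the task reduces to showing $I^- + I^+ = o(|a^+|)$ uniformly in $u,\lambda,\omega^+$. The key point is that $h^\pm$ is quadratically small in its second argument (it is the Taylor remainder of $f$), so $|h^\pm(\tau,v,u,\lambda)| \le \text{const}\cdot|v|^2$ for small arguments, together with the fact — established in the proof of lemma~\ref{lem:link_v} via \eref{eq:normv}, \eref{eq:normH} and the smallness of $\norm{D_1 H}$ — that the fixed-point solution obeys $\norm{v_\bo(a^+,u,\lambda)} \le \text{const}\cdot(|a^+| + |u| + |\lambda|)$. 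Actually one needs a little more care: to get $o(|a^+|)$ uniformly, I would argue that $\norm{v_\bo} = \mathcal{O}(\norm{a})$ with the implied constant uniform in $\bo$ (this is the content of the $\hat C_a$ being uniform in $\omega^\pm$ in lemma~\ref{lem:link_linear}), and that the exponential dichotomy estimates of lemma~\ref{lem:exptrich}, applied to $\int \Phi^\pm(0,\tau)(\id - P^\pm(\tau))h^\pm\,d\tau$, convert the $|v_\bo(\tau)|^2$ bound into a bound of the form $\text{const}\cdot\norm{v_\bo}^2 \le \text{const}\cdot|a^+|^2 = o(|a^+|)$, uniformly in $\omega^+$ (and in $u,\lambda$), since the integrals $\int_0^{\omega^+} K e^{-\delta^u(\tau-0)}\,d\tau$ and its $\R^-$-analogue are uniformly bounded. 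This is precisely the ``standard results from Lin's method'' argument cited in the proof of lemma~\ref{lem:jump_2}, now pushed one order further to extract the $o(|a^+|)$ conclusion instead of $\mathcal{O}(|a^+|)$.

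The main obstacle I anticipate is the uniformity in $\omega^+$: unlike the case where $\gamma^\pm$ are equilibria, the constant $\hat C_g$ (the coefficient of $\norm{g}$ in \eref{eq:normv}) is \emph{not} uniform in $\omega^+$ — it grows like $\|\bo\|$. However, for the integral terms $I^\pm$ we are not estimating $\norm{v_\bo}$ through $\norm{g}$; rather, we estimate the integrals directly against the exponentially-decaying kernel $\Phi^\pm(0,\tau)(\id - P^\pm(\tau))$, whose $L^1$-in-$\tau$ norm on $[-\omega^-,0]$ or $[0,\omega^+]$ \emph{is} uniformly bounded by lemma~\ref{lem:exptrich} (the $e^{-\delta^u(\tau - t)}$ bound on $\id - P^+$). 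Combined with the pointwise bound $|h^+(\tau,v^+_\bo(\tau),u,\lambda)| \le \text{const}\cdot|v^+_\bo(\tau)|^2 \le \text{const}\cdot\norm{v_\bo}^2$ and $\norm{v_\bo} = \mathcal{O}(|a^+|)$ (using $u,\lambda$ small and the uniform-in-$\bo$ part of the estimate), we get $I^\pm = \mathcal{O}(|a^+|^2) = o(|a^+|)$. One should also record that the $a^+$-term itself is genuinely present and of order $|a^+|$ (not smaller), which follows because $\Phi^+(0,\omega^+)^T(\id - P^+(0))^T z$ is, up to the dominant Floquet/eigenvalue factor, bounded away from zero in the relevant component — but for the statement as written only the $o(|a^+|)$ control on the remainder is needed, so I would not belabor this. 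Assembling these pieces yields the displayed formula of corollary~\ref{cor:jump_2}.
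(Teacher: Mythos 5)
Your route is the same as the paper's: the paper's proof of corollary~\ref{cor:jump_2} simply sets $a^-=0$ (killing the first inner product in \eref{eq:jump_2}) and then cites \cite[lemma~3.20]{Sandstede1993} for the $o(|a^+|)$ estimate of the integral terms. You correctly identify the structure and attempt to reconstruct the cited estimate, and your treatment of the kernel is right: lemma~\ref{lem:exptrich} gives $\abs{\Phi^+(0,\tau)(\id-P^+(\tau))}\le Ke^{-\delta^u\tau}$, whose $L^1$-norm over $[0,\omega^+]$ is bounded uniformly in $\omega^+$, and $h^\pm$ is indeed quadratic in $v$.

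The one step that does not hold up as you state it is the uniform-in-$\omega^+$ bound $\norm{v_\bo}=\mathcal{O}(\abs{a^+})$, which you attribute to ``$\hat C_a$ being uniform in $\omega^\pm$ in lemma~\ref{lem:link_linear}.'' That is a non sequitur: the fixed-point estimate reads $\norm{v}\le \hat C_a\norm{a}+\hat C_g\norm{H(v,u,\lambda)}$, and while $\norm{H(v,u,\lambda)}\le \mathrm{const}\cdot\norm{v}^2$ (or $\le \varepsilon\norm{v}$ after shrinking the neighborhood), the coefficient $\hat C_g=\hat C_g(\bo)$ grows like $\omega^+$ when $\gamma^+$ is a genuine periodic orbit. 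Absorbing the $\hat C_g\norm{H}$ term therefore either produces a constant in $\norm{v}\le \mathrm{const}\cdot\abs{a^+}$ that degrades with $\omega^+$, or forces the admissible range of $\abs{a^+}$ to shrink with $\omega^+$ --- either way your chain $I^\pm\le \mathrm{const}\cdot\norm{v_\bo}^2\le \mathrm{const}\cdot\abs{a^+}^2$ is $o(\abs{a^+})$ only for each fixed $\omega^+$, not uniformly. To recover uniformity one needs more than the sup-norm bound on $v_\bo$: e.g.\ pointwise, exponentially weighted estimates on $v^+(\tau)$ (separating the strongly decaying and center components, the latter controlled via the uniform constant $\hat M$ in \eref{eq:ou_normv0}), which is essentially what the Sandstede-type computation the paper invokes provides. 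So the idea is right, but the uniformity claim needs a sharper argument than the one you give.
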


\begin{proof}
 As in the proof of corollary~\ref{cor:linbvp}, we set $a^-=0$. Then estimates in
 \cite[lemma~3.20]{Sandstede1993} provide the the corresponding estimate of the
 integral terms in \eref{eq:jump_2}.
\end{proof}

\section{Joining two short Lin orbit segments}
\label{sec:twolinks}

Let $\gamma_l\cup q_l\cup \gamma$ and $\gamma\cup q_r\cup\gamma_r$ be
consecutive short heteroclinic chain segments. The objective of this section is
to join the related short Lin orbit segments $X_l=(x_l^-,x_l^+)$ and
$X_r=(x_r^-,x_r^+)$ to a long Lin orbit segment. Here we focus on the case where
$\gamma$ is a hyperbolic periodic orbit with minimal period $T>0$.  We use the
same notation as in \sref{sec:onelink} with an additional subscript `$l$' or
`$r$' referring to the left short Lin orbit segments $X_l$ or right short Lin
orbit segments $X_r$, respectively.  However, for convenience we use the short
notation $\omega^- = \omega_l^-$ and $\omega^+ = \omega_r^+$.

In the construction the transition time $\tau$ from $\Sigma_l$ to $\Sigma_r$
plays a major role. In the present context, $\tau$ is directly related to the
number $\nu$ of rotations of the long Lin orbit segment along $\gamma$.

\begin{thm}\label{thm:lhc}
  Fix $\omega^-,\omega^+>0$.
  There are constants $c, N>0$ such that for all $\abs{\lambda}<c$,
  $u = (u_l,u_r)$, $\norm{u}<c$, $a^-,a^+\in\R^n$, $\abs{a_l^-},\abs{a_r^+}<c$, and for all $\nu \in \N \cap (N,\infty)$,
  there is a transition time $\tau$ and a unique triple $x =
  (x_l,x_m,x_r)$, $x(\cdot) = x(\nu,a_l^-,a_r^+,u,\lambda)(\cdot)$, of solutions
  of~\eref{eq:system} that satisfy
\begin{description}
    \item[($J$)]
      $x_{l}(0), x_{m}(0) \in\Sigma_l$,\;  $x_m(\tau), x_{r}(0) \in\Sigma_r$,
\; $x_l(0) - x_m(0) \in Z_l$,\; $x_m(\tau) - x_r(0) \in Z_r$,
    \item[($B_l$)] $\left( \id - P_l^-(u_l,\lambda)(-\omega^-) \right) \left(
      x_l(-\omega^-) -
      q_l^-(u_l,\lambda)(-\omega^-)-a^- \right) = 0$,
    \item[($B_r$)]
      $\left( \id -
      P_r^+(u_r,\lambda)(\omega^+) \right) \left(
      x_r(\omega^+) - q_r^+(u_r,\lambda)(\omega^+)-a^+
      \right) = 0$.
\end{description}
\end{thm}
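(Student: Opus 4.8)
The plan is to reduce Theorem~\ref{thm:lhc} to a coupling problem between the two short Lin orbit segments $X_l$ and $X_r$ provided by Theorem~\ref{thm:linbvp}, together with a connecting piece $x_m$ near $\gamma$ that is governed by the Poincar\'e map of $\gamma$. Concretely, I would first invoke Theorem~\ref{thm:linbvp} (in the form of Corollary~\ref{cor:linbvp} / Remark~\ref{rem:infinity} where one of the boundary data is free) to obtain $x_l = (x_l^-, x_l^+)$ with boundary condition $(B_l^-)$ at $-\omega^-$ and a free datum $a_l^+$ at $\omega^+ = \omega_l^+$, and symmetrically $x_r = (x_r^-, x_r^+)$ with $(B_r^+)$ at $\omega^+$ and a free datum $a_r^-$ at $-\omega^-_r$. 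The key point is that the free boundary data $a_l^+$ and $a_r^-$ of these two segments are exactly the handles through which the middle piece couples to the outer pieces: $x_l^+(\omega_l^+)$ must match the `entry' of $x_m$ into a neighborhood of $\gamma$, and the `exit' of $x_m$ must match $x_r^-(-\omega_r^-)$.

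Second, I would construct $x_m$ as a suspension $x_m = x_m(y)$ of an orbit $y$ of the Poincar\'e map $\pi$ associated with a cross-section of $\gamma$, following the approach announced in the introduction and in~\cite{Riess2008}. The orbit $y$ has length $\nu$ (the number of rotations along $\gamma$), and the transition time $\tau$ is determined by $\nu$ together with the minimal period $T$ of $\gamma$ (up to a correction of order one coming from the sections $\Sigma_l,\Sigma_r$). The relevant structural input is that $\pi$ is a $C^k$ diffeomorphism with a hyperbolic fixed point at $\gamma \cap (\text{section})$, so its variational behaviour along $y$ has an exponential dichotomy; this is precisely the setting in which Lin's method for discrete systems~\cite{Knobloch2004} applies and yields, for each sufficiently large $\nu$, a unique orbit $y$ of length $\nu$ connecting the prescribed `in' data (coming from $x_l^+(\omega_l^+)$) to the prescribed `out' data (required by $x_r^-(-\omega_r^-)$), with all the attendant estimates. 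The jump conditions in $Z_l$ and $Z_r$ inside $(J)$ are incorporated into this discrete boundary-value problem in the same way the jump $(J)$ was incorporated in Theorem~\ref{thm:linbvp}.

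Third, I would close the argument by a fixed-point / implicit-function step in the finite-dimensional variables $(a_l^+, a_r^-)$ (and the transition time $\tau$). The short Lin orbit segments $X_l, X_r$ depend smoothly on their free data, and the suspended middle orbit $x_m(y)$ depends smoothly on its entry/exit data and on $\nu$; imposing the matching conditions at $\Sigma_l$ and at $\Sigma_r$ gives a system of equations whose linearization is, for $\nu$ large, a small perturbation of an invertible map — here the crucial smallness comes from the exponential dichotomy estimates of $\pi^\nu$ (the map contracts the stable data and the inverse contracts the unstable data by a factor $\sim e^{-\delta \nu}$), exactly as in~\cite{Knobloch2004}. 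The Banach fixed point theorem (or the implicit function theorem) then produces, for each $\nu \in \N \cap (N,\infty)$ with $N$ large, a unique $(\tau, a_l^+, a_r^-)$, hence a unique triple $x = (x_l, x_m, x_r)$ satisfying $(J)$, $(B_l)$, $(B_r)$, depending smoothly on $(a^-, a^+, u, \lambda)$; the outer boundary conditions $(B_l^-)$ and $(B_r^+)$ are untouched throughout, as promised.

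The main obstacle I expect is the careful handling of the middle piece near $\gamma$ and of the transition time $\tau$: one must set up the Poincar\'e section and the suspension so that $x_m$ is genuinely a solution of~\eref{eq:system} on the time interval $[0,\tau]$, that the sections $\Sigma_l$ and $\Sigma_r$ (which are tailored to $q_l$ and $q_r$, not to $\gamma$) are compatible with the Poincar\'e section of $\gamma$, and that the number of rotations $\nu$, the transition time $\tau$, and the discrete orbit length are consistently related with uniform (in $\nu$) constants. The analytic heart — existence and uniqueness of the length-$\nu$ orbit of $\pi$ with the prescribed in/out data and jumps — is a direct citation of Lin's method for maps~\cite{Knobloch2004}, so the work is in the bookkeeping that glues the continuous outer segments to this discrete middle object and in verifying that the coupling operator is a contraction uniformly for all large $\nu$.
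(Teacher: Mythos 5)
Your proposal follows essentially the same route as the paper: outer short Lin orbit segments from theorem~\ref{thm:linbvp} with free data $a_l^+,a_r^-$, a discrete Lin-type boundary value problem for the Poincar\'e map of $\gamma$ (lemmas~\ref{lem:lambdalemma0} and~\ref{lem:lambdalemma}), and a coupling fixed point in the free boundary data whose contraction comes from the exponential smallness of $a_\perp^\pm$ and $b_\perp^\pm$ (lemma~\ref{lem:couplingper}); the transition-time compatibility you flag as the main obstacle is resolved in the paper by rescaling the vector field along $q_l^+$ and $q_r^-$ (hypothesis~\ref{hyp:scaling}), which also identifies $\ker Q^\pm$ with $\ker P^\pm$ at the coupling times. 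One small correction: the jumps in $Z_l$ and $Z_r$ are \emph{not} incorporated into the discrete problem --- the $\Pi$-orbit $y$ is genuinely connected via the coupling condition $(\mathcal{C})$, and the jumps live entirely in the continuous short segments at $\Sigma_l$ and $\Sigma_r$, exactly as your first step already provides.
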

Figure~\ref{fig:lhc} visualizes the statement of the theorem.
\begin{figure}[ht]
\begin{center}
\includegraphics[scale=.8]{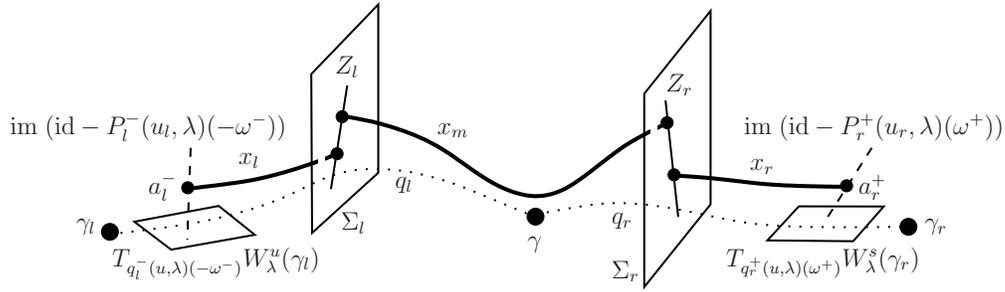}
\caption{\label{fig:lhc}Sketch of a long Lin orbit segment $(x_l,x_m,x_r)$ near a long heteroclinic
chain $\gamma_l \cup q_l \cup \gamma \cup q_r \cup \gamma_r$.}
\end{center}
\end{figure}

We perform the proof of theorem~\ref{thm:lhc} in two steps, see
sections~\ref{sec:flowneargamma} and~\ref{sec:couplinginsigma}.  First we study the
flow along $\gamma$ by means of a Poincar\'e map
$\Pi:\Sigma_\gamma\to\Sigma_\gamma$, where $\Sigma_\gamma$ is an appropriate
Poincar\'e section.  More precisely, we show that there are $\Pi$-orbit segments
$y$ satisfying certain boundary conditions in $\Sigma_\gamma$. To that end, we
employ a similar technique as used in the theory of Lin's method for discrete
dynamical systems \cite{Knobloch2004}.  We denote the $f$-orbit that
is the suspension of the $\Pi$-orbit $y$ by $x(y)$.  Then we couple $x_l^+$ and $x(y)$
and simultaneously $x(y)$ and $x_r^-$. The partial orbit $x_m$ is composed
of $x_l^+$, $x(y)$ and $x_r^-$.  The (in this
context prescribed) times $\omega^+_l$ and $\omega^-_r$ and the duration of
$x(y)$ add up to the transition time $\tau$.  Further, we have $x_l = x_l^-$ and
$x_r = x_r^+$.

The statement of theorem~\ref{thm:lhc} remains true for $\omega^-=\omega^+=\infty$ in the following sense:
\begin{cor}\label{cor:lhc}
 There are constants $c, N>0$ such that for all $\abs{\lambda}<c$,
  $u = (u_l,u_r)$, $\abs{u_l},\abs{u_r}<c$,  and for all $\nu \in \N \cap (N,\infty)$,
  there is a unique triple $x =
  (x_l,x_m,x_r)$, $x(\cdot) = x(\nu,u,\lambda)(\cdot)$, of solutions
  of~\eref{eq:system} such that for some transition time $\tau$
\begin{description}
    \item[($J$)]
      $x_{l}(0), x_{m}(0) \in\Sigma_l$,\;  $x_m(\tau), x_{r}(0) \in\Sigma_r$,\;
      $x_l(0) - x_m(0) \in Z_l$,\; $x_m(\tau) - x_r(0) \in Z_r$,
\item[($B$)]
$x_{l}(0)\in W^u(\gamma_l)$,\quad $x_{r}(0)\in W^s(\gamma_r)$.
\end{description}
For fixed $\nu$ the solution $x$ depends smoothly on $(u,\lambda)$.
\end{cor}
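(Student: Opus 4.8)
The plan is to obtain Corollary~\ref{cor:lhc} from Theorem~\ref{thm:lhc} in precisely the way in which Corollary~\ref{cor:linbvp} was obtained from Theorem~\ref{thm:linbvp}. The outer boundary data $a^-=a_l^-$ and $a^+=a_r^+$ enter the construction of the long Lin orbit segment only through the \emph{projected} quantities $\bigl(\id-P_l^-(u_l,\lambda)(-\omega^-)\bigr)a^-$ and $\bigl(\id-P_r^+(u_r,\lambda)(\omega^+)\bigr)a^+$ appearing in ($B_l$) and ($B_r$); hence setting $a^-=a^+=0$ is admissible in Theorem~\ref{thm:lhc}, and, by the flatness convention for the invariant manifolds used in the proof of Corollary~\ref{cor:linbvp}, this forces the two outer partial orbits into $W^u_\lambda(\gamma_l)$ and $W^s_\lambda(\gamma_r)$. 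Those orbits then extend to the half-lines $(-\infty,0]$ and $[0,\infty)$ by invariance of the manifolds, which is exactly the meaning of `$\omega^-=\omega^+=\infty$'; this also explains why the dependence on $a$ disappears in the corollary.

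Concretely, I would fix any finite $\omega^-,\omega^+>0$ in accordance with Theorem~\ref{thm:lhc} and apply that theorem with $a^-=0$ and $a^+=0$. For each $\nu\in\N\cap(N,\infty)$ this produces a transition time $\tau$ and a unique triple $x=(x_l,x_m,x_r)$ solving~\eref{eq:system} and satisfying ($J$), ($B_l$) and ($B_r$) with vanishing $a$-terms. Recall from the construction underlying Theorem~\ref{thm:lhc} that $x_l=x_l^-$ is the left partial orbit of the short Lin orbit segment over $\gamma_l\cup q_l\cup\gamma$ and that its left boundary condition ($B_l$) is literally the condition ($B^-$) of Theorem~\ref{thm:linbvp} with $a^-=0$; likewise $x_r=x_r^+$ carries ($B^+$) with $a^+=0$. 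Hence the argument in the proof of Corollary~\ref{cor:linbvp} applies verbatim at the left end: ($B_l$) with $a^-=0$ gives $x_l(-\omega^-)-q_l^-(u_l,\lambda)(-\omega^-)\in\im P_l^-(u_l,\lambda)(-\omega^-)=T_{q_l^-(u_l,\lambda)(-\omega^-)}W^u_\lambda(\gamma_l)$, and flatness of $W^u_\lambda(\gamma_l)$ near $q_l^-(u_l,\lambda)(-\omega^-)$ then yields $x_l(-\omega^-)\in W^u_\lambda(\gamma_l)$. Since $W^u_\lambda(\gamma_l)$ is flow-invariant and all of its points are backward asymptotic to $\gamma_l$, the solution $x_l$ extends to $(-\infty,0]$ staying in $W^u_\lambda(\gamma_l)$; in particular $x_l(0)\in W^u_\lambda(\gamma_l)$. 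The symmetric reasoning, using Remark~\ref{rem:infinity} (the `$\omega^+=\infty$' version of Theorem~\ref{thm:linbvp}) together with $a^+=0$, gives $x_r(0)\in W^s_\lambda(\gamma_r)$, so ($B$) holds. Uniqueness of the triple and the value of $\tau$ carry over unchanged from Theorem~\ref{thm:lhc}, and the smooth dependence of $x$ on $(u,\lambda)$ for fixed $\nu$ comes with the construction of Theorem~\ref{thm:lhc}, exactly as the smooth dependence established in Lemma~\ref{lem:link_v}.

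I expect no genuinely new difficulty here: the coupling near $\gamma$ lives entirely in the middle piece $x_m$ and is untouched when the outer data are set to zero, so the whole argument is essentially bookkeeping on top of Theorem~\ref{thm:lhc} and Corollary~\ref{cor:linbvp}. The only step deserving care is the passage from the tangency relation to actual membership $x_l(-\omega^-)\in W^u_\lambda(\gamma_l)$; as in Corollary~\ref{cor:linbvp} this is handled by the flatness convention (equivalently, by first straightening $W^u_\lambda(\gamma_l)$ near $q_l^-(u_l,\lambda)(-\omega^-)$ via a local coordinate change, or by absorbing the $\im P_l^-$ component of the correction term along the corresponding strong unstable fibre), and the same convention is applied at the right end.
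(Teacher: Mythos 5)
Your proposal is correct and follows essentially the same route as the paper, whose own proof is the one-line observation that the statement follows from lemma~\ref{lem:couplingper} (the perturbation reformulation of theorem~\ref{thm:lhc}) with $a_l^-=a_r^+=0$, combined with the flatness argument of corollary~\ref{cor:linbvp} and remark~\ref{rem:infinity}. Your expanded account of how the vanishing projected boundary data force the outer partial orbits into the invariant manifolds, and how flow-invariance then gives ($B$) at time $0$, is exactly the intended argument.
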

The proof of corollary~\ref{cor:lhc} will be given at the end of
section~\ref{sec:couplinginsigma}.

\begin{defn}
If $\omega^-,\omega^+<\infty$ we call the triple $(x_l,x_m,x_r)$ a long Lin
orbit segment, and in case $\omega^-=\omega^+=\infty$ we call $(x_l,x_m,x_r)$ a
heteroclinic Lin orbit connecting $\gamma_l$ and $\gamma_r$.  If $\gamma_l
\equiv \gamma_r$, we call $(x_l,x_m,x_r)$ a homoclinic Lin orbit connecting
$\gamma_l$ to itself.
\end{defn}

\subsection{The flow near $\gamma$}
\label{sec:flowneargamma}

Let $\Sigma_\gamma$ be a Poincar\'e section of $\gamma$.
We consider the discrete dynamical system defined by the Poincar\'e map
$\Pi:\Sigma_\gamma\times\R^m\to\Sigma_\gamma$ near $p_\gamma:=\gamma\cap\Sigma_\gamma$:
\begin{equation}
\label{eq:poincare}
  y(n+1) = \Pi(y(n),\lambda).
\end{equation}

The intersection points of $q_l^+(u_l,\lambda)$ and $q_r^-(u_r,\lambda)$ with
the Poincar\'e section $\Sigma_\gamma$ define solutions $q_d^+(u_l,\lambda)(n)$,
$q_d^-(u_r,\lambda)(n)$ of~\eref{eq:poincare} lying in the stable/unstable
manifold of the hyperbolic $\Pi$-equilibrium $p_\gamma$.  
Let $Y_\gamma$ be the $(n-1)$-dimensional subspace of $\R^n$ such that
\[
\Sigma_\gamma= p_\gamma+Y_\gamma.
\]
The
variational equation along $q_d^\pm$ has an exponential dichotomy on
$\Z^\pm$ and we denote the corresponding projections by $Q^+(u_l,\lambda)$ and
$Q^-(u_r,\lambda)$.
Note again that the images of $Q^\pm$ are well-determined:
\begin{equation}\label{eq:def_imQ}
\begin{array}{l}
  \im
  Q^+(u_l,\lambda)(0)=T_{q_d^+(u_l,\lambda)(0)}W^s_{\Pi,\lambda}(p_\gamma)\subset
  T_{q_d^+(u_l,\lambda)(0)}W_\lambda^s(\gamma),
\\[1ex]
\im
Q^-(u_r,\lambda)(0)=T_{q_d^-(u_r,\lambda)(0)}W^u_{\Pi,\lambda}(p_\gamma)\subset
T_{q_d^-(u_r,\lambda)(0)}W_\lambda^u(\gamma),
\end{array}
\end{equation}
where $W^{s(u)}_{\Pi,\lambda}$ denotes the (un)stable manifold of the mapping
$\Pi=\Pi(\lambda)$ and we use the short notation $W^{s(u)}_\Pi$ at $\lambda=0$.
However, there is some freedom in choosing the kernels of $Q^\pm$, which
allows us to use the ideas from Lin's method for discrete dynamical systems in the
following, and also allows us to couple the solution of the discrete system with the
solutions of the continuous system, cf.~\eref{eq:projections}
and~\eref{eq:imQ_subset} below.

\begin{lem}\label{lem:lambdalemma0}
  There are constants $\tilde{c}, \bar{c}>0$ and $N\in\N$ such that for all $\abs{\lambda}<\bar{c}$, $u =
  (u_l,u_r)$, $u_l\in U_l$, $u_r\in U_r$, $\abs{u_l},\abs{u_r}<\bar{c}$, $\nu>N$
  and $b=(b^+,b^-)\in Y_\gamma\times Y_\gamma$, $\norm{b}<\tilde{c}$, there is a unique solution $y =
  y(b,u_l,u_r,\lambda)$ of~\eref{eq:poincare} that satisfies
  \begin{description}
    \item[($\mathcal B$)] $Q^+(u_l,\lambda)(0) \left(y(b,u,\lambda)(0) -
      q_d^+(u_l,\lambda)(0)-b^+\right) =
      0$,
    \\
$Q^-(u_r,\lambda)(0) \left(y(b,u,\lambda)(\nu) -
      q_d^-(u_r,\lambda)(0)- b^-\right)
      = 0$.
  \end{description}
\end{lem}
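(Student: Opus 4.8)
The plan is to read the boundary condition $(\mathcal B)$ as a Shilnikov-type boundary value problem for the map $\Pi$ — the discrete analogue of the passage problem near a hyperbolic equilibrium — and to solve it by a Banach fixed point argument along the lines of Lin's method for discrete systems~\cite{Knobloch2004}. Fix a splitting index $j=j(\nu):=\lfloor\nu/2\rfloor$, so that both $j$ and $\nu-j$ tend to infinity with $\nu$. On the left block $\{0,\dots,j\}$ I would write $y(n)=q_d^+(u_l,\lambda)(n)+v^+(n)$ and on the right block $\{j,\dots,\nu\}$ I would write $y(n)=q_d^-(u_r,\lambda)(n-\nu)+v^-(n)$; both reference sequences are defined on their blocks, and near $n=j$ both are $\mathcal O(e^{-\delta\nu/2})$-close to $p_\gamma$ (here $\delta>0$ is a common dichotomy exponent), so the coupling term $d(j,\nu):=q_d^-(u_r,\lambda)(j-\nu)-q_d^+(u_l,\lambda)(j)$ is exponentially small in $\nu$. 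Then $v^\pm$ solve nonlinear variational equations $v^\pm(n+1)=D_1\Pi(q_d^\pm(\cdot),\lambda)\,v^\pm(n)+r^\pm(n,v^\pm(n),u,\lambda)$ with remainders $r^\pm$ vanishing to second order in $v^\pm$ at the origin; using $q_d^-(u_r,\lambda)(\nu-\nu)=q_d^-(u_r,\lambda)(0)$, condition $(\mathcal B)$ becomes the \emph{linear} conditions $Q^+(u_l,\lambda)(0)\,v^+(0)=Q^+(u_l,\lambda)(0)\,b^+$ and $Q^-(u_r,\lambda)(0)\,v^-(\nu)=Q^-(u_r,\lambda)(0)\,b^-$, while the requirement that the two pieces agree at $n=j$ reads $v^+(j)-v^-(j)=d(j,\nu)$.

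\emph{The linear problem.} Since $p_\gamma$ is a \emph{hyperbolic} fixed point of $\Pi$ there are no center directions — this is the essential simplification compared with the trichotomy near $\gamma$ in \sref{sec:onelink} — so the variational equations along $q_d^+$ on $\Z^+$ and along $q_d^-$ on $\Z^-$ carry genuine exponential dichotomies, with $Q^+(n)$ projecting onto the stable part and an analogous unstable-type projection built from $Q^-(n)$. Because both reference orbits hug $p_\gamma$ near $n=j$ for $\nu\ge N$ with $N$ large, the two linearizations there differ by an $\mathcal O(e^{-\delta\nu/2})$ amount, so the linearizations glued at $j$ still possess an exponential dichotomy on $\{0,\dots,\nu\}$ up to an exponentially small perturbation. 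Consequently the linear inhomogeneous problem — the recursion on $\{0,\dots,\nu\}$ with a forcing term, the stable part at $n=0$ and the unstable part at $n=\nu$ prescribed (via $b^+$ and $b^-$), and an additional jump $d(j,\nu)$ inserted at $n=j$ — has a unique bounded solution, representable through a discrete Green's function, and the associated solution operator $(g^+,g^-,b,d)\mapsto(v^+,v^-)$ is bounded in the supremum norm \emph{uniformly in} $\nu$. Establishing this $\nu$-uniform bound is the technical heart of the argument and the step I expect to be the main obstacle: it is the discrete counterpart of the $\nu$-uniform invertibility of the operator $L$ in Lemma~\ref{lem:link_linear}, and it is precisely here that hyperbolicity enters, guaranteeing that the discrete Green's function decays exponentially off the diagonal independently of the interval length.

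\emph{Closing the nonlinear problem.} Substituting $g^\pm(n)=r^\pm(n,v^\pm(n),u,\lambda)$ into the linear solution operator turns the boundary value problem into a fixed point equation for $(v^+,v^-)$ in the Banach space of pairs of sequences on the two blocks equipped with the supremum norm. Because $r^\pm$ is superlinear in $v^\pm$, the coupling term $d(j,\nu)$ is exponentially small in $\nu$, and $b,u,\lambda$ are small, the right-hand side maps a sufficiently small ball into itself and is a contraction on it; the Banach fixed point theorem then yields the unique $v=v(b,u,\lambda)$ and hence the unique $y$ satisfying $(\mathcal B)$. The thresholds $\tilde c$, $\bar c$ and $N$ are read off from the radii needed in the self-mapping and contraction estimates. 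Finally, the implicit function theorem applied at the fixed point provides the smooth dependence of $y$ on $(b,u_l,u_r,\lambda)$ that is used in the coupling step of \sref{sec:twolinks}.
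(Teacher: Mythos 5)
Your proposal is correct and follows essentially the same route as the paper: split the $\Pi$-orbit at $\lfloor\nu/2\rfloor$, write each half as a perturbation $w^\pm$ of $q_d^\pm$, solve the linear inhomogeneous boundary value problem with $\nu$-uniform dichotomy bounds and an exponentially small coupling term $d$, and close with the Banach fixed point theorem (this is exactly the paper's reduction to lemma~\ref{lem:lambdalemma} and its proof). The only difference is organizational --- the paper solves two separate half-interval problems with auxiliary data $\beta^\pm$ prescribed at the inner endpoints and then matches them via $\beta=\beta(d)$, whereas you glue the two dichotomies into a single one on $\{0,\dots,\nu\}$ with a jump inserted at $j$ --- but this does not change the substance of the argument.
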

Again we suppress the dependence of $y$ on $\nu$ from our notation.  Note that
this lemma is a discrete version of the existence and uniqueness
result on Shilnikov data, cf. \cite{Deng1989}.  However, also in view of its
application in the following section~\ref{coup_in_Sigma}, we consider a
reformulation by using small perturbations of $q_d^+(n)$ and $q_d^-(n)$, similar
to \sref{sec:onelink}: For given $\nu\in\N$ we define
$\nu^+:=\lfloor\frac{\nu}{2}\rfloor$, the integer part of $\nu$, and
$\nu^-:=\nu-\nu^+$.  Further, let us think of $y$ as being composed of two
partial orbits as follows
\begin{equation}\label{eq:def_y_pm_a}
 y(n)=\left\{
\begin{array}{ll}
 y^+(n),&n\in[0,\nu^+]\cap\N
\\
y^-(n-\nu),&n\in[\nu^+,\nu]\cap\N
\end{array}
\right.
\end{equation}
with the additional demand that
\begin{equation}\label{eq:def_y_pm_b}
 y^+(\nu^+)=y^-(-\nu^-).
\end{equation}
We write
\begin{equation}\label{eq:def_y_pm_c}
  y^+(n) = q_d^+(u_l,\lambda)(n) + w^+(n)\mbox{ and } y^-(n) =
  q_d^-(u_r,\lambda)(n) + w^-(n).
\end{equation}
If $y^\pm$ solve \eref{eq:poincare}, then $w^\pm(\cdot)$ satisfy the following difference equations:
\begin{equation}
\label{eq:dnonlinvar}
\eqalign{
w^-(n+1) &= D_1
\Pi(q_d^{-}(u_r,\lambda)(n),\lambda) w^-(n) +
h^-(n,w^-,u_r,\lambda),\\
w^+(n+1) &= D_1
\Pi(q_d^{+}(u_l,\lambda)(n),\lambda) w^+(n) +
h^+(n,w^+,u_l,\lambda),
}
\end{equation}
where
\begin{eqnarray*}
h^\pm(n,w,u,\lambda) :=&
\Pi(q_d^\pm(u,\lambda)(n)+w,\lambda) -
\Pi(q_d^\pm(u,\lambda)(n),\lambda)\\
&\quad - D_1
\Pi(q_d^\pm(u,\lambda)(n),\lambda)w.
\end{eqnarray*}

For $\nu\in\N$ let $S_\nu$ denote the space of functions $\{ 0,\dots,\nu \}\to Y_\gamma$, and let 
$S_{-\nu}$ denote the space of functions $\{-\nu,\dots,0 \}\to Y_\gamma$.
For given $\nu^+$ and $\nu^-$ we write $\bn:=(\nu^+,\nu^-)$, and we
define the space 
\begin{equation*}
  W_\bn := S_{\nu^+} \times S_{-\nu^-}.
\end{equation*}
Then lemma~\ref{lem:lambdalemma0} follows from
\begin{lem}
\label{lem:lambdalemma}
There are  constants $\bar{\epsilon},\tilde\mathfrak{c},\bar\mathfrak{c}$ and
$N\in\N$ such that for $\abs{\lambda}\leq\bar\mathfrak{c}$, $u:=(u_l,u_r)\in
U_l\times U_r$, with $\norm{u}\leq\bar\mathfrak{c}$, $\nu>N$ and $b=(b^+,b^-)\in
Y_\gamma\times Y_\gamma$, $\norm{b}\leq\tilde\mathfrak{c}$, there is a unique
pair $w_\bn = (w^+_\bn,w^-_\bn) \in W_\bn$, $w_\bn = w_\bn(b,u,\lambda)$, of
solutions of~\eref{eq:dnonlinvar} in an $\bar{\epsilon}$-neighborhood of $0\in
W_\bn$ such that
 \begin{description}
    \item[(${\mathcal B}_w$)] $Q^+(u_l,\lambda)(0)( w^+_\bn(b,u,\lambda)(0)-b^+) = 0$, \quad
      $Q^-(u_r,\lambda)(0)( w^-_\bn(b,u,\lambda)(0)-b^-)= 0$,
    \item[($\mathcal C$)] $w^-_\bn(b,u,\lambda)(-\nu^-) -
      w^+_\bn(b,u,\lambda)(\nu^+) =
      q_d^+(u_l,\lambda)(\nu^+) - q_d^-(u_r,\lambda)(-\nu^-)$.
  \end{description}
\end{lem}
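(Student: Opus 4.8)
The plan is to prove Lemma~\ref{lem:lambdalemma} by a Banach fixed point
argument that mirrors the continuous-time construction from
Lemma~\ref{lem:link_linear} through Lemma~\ref{lem:link_v}, but now carried
out in the discrete setting of the Poincar\'e map $\Pi$. First I would
record the discrete analogue of the exponential dichotomy statement
(Lemma~\ref{lem:exptrich}): along $q_d^+(u_l,\lambda)$ the variational
difference equation has an exponential dichotomy on $\Z^+$ with projector
$Q^+(u_l,\lambda)(\cdot)$, and likewise along $q_d^-(u_r,\lambda)$ on
$\Z^-$ with projector $Q^-(u_r,\lambda)(\cdot)$, with uniform dichotomy
constants $\tilde K>0$ and rates $0<\rho<1$. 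Because $p_\gamma$ is a
hyperbolic fixed point of $\Pi$ there is no center part, so these are
genuine dichotomies rather than trichotomies, which makes the estimates
cleaner than in the continuous case.

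Next I would solve the linear inhomogeneous problem. Writing
$w^\pm(n+1)=D_1\Pi(q_d^\pm(\cdot)(n),\lambda)w^\pm(n)+g^\pm(n)$ with
prescribed inhomogeneities $g=(g^+,g^-)$, the discrete variation-of-constants
formula together with the dichotomy projectors gives, exactly as in
\eref{eq:linop}, expressions for the ``unstable/stable complement'' parts
$(\id-Q^+(0))w^+(0)$ and $(\id-Q^-(0))w^-(0)$ in terms of $b^\pm$ and of the
sums $\sum\Phi^\pm(0,\tau)(\id-Q^\pm(\tau))g^\pm(\tau)$. The boundary
conditions (${\mathcal B}_w$) fix the projected parts $Q^+(0)w^+(0)=Q^+(0)b^+$
and $Q^-(0)w^-(0)=Q^-(0)b^-$, while the coupling condition ($\mathcal C$)
$w^-(-\nu^-)-w^+(\nu^+)=q_d^+(u_l,\lambda)(\nu^+)-q_d^-(u_r,\lambda)(-\nu^-)$
ties the two segments together at the midpoint. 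One decomposes $w^\pm(0)$
along $\im Q^\pm(0)\oplus\ker Q^\pm(0)$ and observes that the resulting
linear map from the free data to the constraints is a small perturbation of
an invertible map — here one uses that $q_d^+(u_l,\lambda)(\nu^+)$ and
$q_d^-(u_r,\lambda)(-\nu^-)$ are exponentially small in $\nu$ (they lie on
orbits converging to $p_\gamma$), so the midpoint coupling term contributes
a factor $\rho^{\nu/2}$ and is harmless for $\nu>N$ large. This yields a
unique linear solution operator $(g,b)\mapsto \hat w(g,b,u,\lambda)$ with
$\norm{\hat w}\le \hat C_b\norm{b}+\hat C_g\norm{g}+\hat C\rho^{\nu/2}$, all
constants uniform in $\nu,u,\lambda$ (this is where the absence of a center
direction pays off: unlike $\hat C_g$ in Lemma~\ref{lem:link_linear}, here
the gain constant does not grow).

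Then I would close the nonlinear problem by substituting $g^\pm$ with the
Nemytskii operator built from $h^\pm$ — note $h^\pm(n,0,u,\lambda)=0$ and
$D_w h^\pm(n,0,0,0)=0$, so $\norm{H(w,u,\lambda)}\le
\eta(\norm{w},\abs u,\abs\lambda)$ with $\eta\to0$ — and apply the Banach
fixed point theorem on a ball $B(0,\bar\epsilon)\subset W_\bn$ exactly as in
the proof of Lemma~\ref{lem:link_v}: shrinking $\bar\mathfrak c$ and
$\tilde\mathfrak c$ makes $F(\cdot)=\hat w(H(\cdot,u,\lambda),b,u,\lambda)$
both self-map the ball and be a contraction, and the implicit function
theorem gives smooth dependence on $(b,u,\lambda)$. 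Finally, Lemma~\ref{lem:lambdalemma0}
follows by setting $y^\pm=q_d^\pm+w^\pm_\bn$ and checking that
(${\mathcal B}_w$)+($\mathcal C$) for $w^\pm$ are equivalent to
($\mathcal B$) for the glued orbit $y$ defined by \eref{eq:def_y_pm_a}–\eref{eq:def_y_pm_c}.
The main obstacle I anticipate is the bookkeeping around the coupling
condition ($\mathcal C$) at the midpoint: one must verify that the linear
map encoding (${\mathcal B}_w$) and ($\mathcal C$) is uniformly invertible
in $\nu$, which requires carefully tracking how the dichotomy projectors at
$n=\pm\nu^\mp$ interact with the fixed kernels of $Q^\pm(0)$ — the freedom
in choosing $\ker Q^\pm$ mentioned after \eref{eq:def_imQ} has to be used so
that the off-diagonal blocks are exponentially small in $\nu$ rather than
$O(1)$. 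Everything else is a routine discrete transcription of the arguments
already carried out in \sref{sec:exptrich}.
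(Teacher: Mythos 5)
Your proposal matches the paper's (sketched) proof: both solve the linear inhomogeneous difference equations via the discrete exponential dichotomies along $q_d^\pm$, resolve the midpoint coupling through the invertibility of the map on the complementary projections $(\id-Q^+)(\nu^+)$, $(\id-Q^-)(-\nu^-)$ together with the exponentially small right-hand side $d_\bn(u,\lambda)$, and then close the nonlinear problem by a Banach fixed point argument with the discrete Nemytskii operator, exactly as in section~\ref{sec:exptrich}. The only cosmetic difference is that the paper first prescribes free data $\beta^\pm$ via an intermediate boundary condition $(\mathcal B_\beta)$ and then passes to the difference condition $(\mathcal B_d)$ with $\beta=\beta(d)$, whereas you treat the coupling in one step; the uniform-in-$\nu$ invertibility you flag as the main obstacle is precisely the step the paper delegates to \cite{Sandstede1993} and \cite{Knobloch2004}.
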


\begin{proof}
 To some extent the arguments run parallel to those used in
 section~\ref{sec:exptrich}. Here we only give a sketch of the proof;
 for more details we refer to~\cite{Riess2008}.

First we consider the inhomogeneous equations 
\begin{equation}
\label{eq:dlinvar_inh}
\eqalign{
w^-(n+1) &= D_1
\Pi(q_d^{-}(u_r,\lambda)(n),\lambda) w^-(n) + g^-(n),
\\
w^+(n+1) &= D_1
\Pi(q_d^{+}(u_l,\lambda)(n),\lambda) w^+(n) +g^+(n),
}
\end{equation}
with boundary conditions
\begin{description}
 \item[(${\mathcal B}_w$)] $Q^+(u_l,\lambda)(0)( w^+(0) - b^+)=0$,\quad
   $Q^-(u_r,\lambda)(0)( w^-(0) - b^-)=0$, 
 \item[(${\mathcal B}_\beta$)]
   $(\id-Q^+(u_l,\lambda)(\nu^+)) w^+(\nu^+) = \beta^+$,\quad
   $(\id-Q^-(u_r,\lambda)(-\nu^-)) w^-(-\nu^-) = \beta^-$,
\end{description}
for given $\beta^+\in\im\,(\id-Q^+(u_l,\lambda)(\nu^+))$ and
$\beta^-\in\im\,(\id-Q^-(u_r,\lambda)(-\nu^-))$.  We  write
$\beta:=(\beta^+,\beta^-)$.  Similar to the proof of
lemma~\ref{lem:link_linear},
we find that the boundary value problem
(\eref{eq:dlinvar_inh},($B_b$),($B_\beta$)) has a unique solution $\bar w\in
W_\bn$, $\bar w=\bar w(g,b,\beta,u,\lambda)$.

Next we replace the boundary condition (${\mathcal B}_\beta$) by 
\begin{description}
 \item[(${\mathcal B}_d$)] $w^+(\nu^+)-w^-(-\nu^-)=d$, \quad $d\in Y_\gamma$.
\end{description}
Indeed there is a $\beta=\beta(d)$ such that $\hat w(g,b,d,u,\lambda):=\bar
w(g,b,\beta(d),u,\lambda)$ is the unique solution of the boundary value problem
(\eref{eq:dlinvar_inh},($B_b$),($B_d$)). The argument for this fact runs parallel to
the corresponding construction in \cite{Sandstede1993} or \cite{Knobloch2004}.

Further, similar to the proof of lemma~\ref{lem:link_v}, we consider a fixed
point equation whose solutions also satisfy the coupling condition (${\mathcal
C}$): For that we define
\begin{equation}\label{eq:def_d}
 d_\bn(u,\lambda):=q_d^+(u_l,\lambda)(\nu^+)-q_d^-(u_r,\lambda)(-\nu^-).
\end{equation}
Finally, we consider the fixed point equation
\begin{equation}\label{eq:fixedpoint_d}
 w=\hat w({\mathcal H}(w,u,\lambda),b,d_\bn,u,\lambda).
\end{equation}
Here, ${\mathcal H}$ is the discrete pendant of the Nemytskii operator defined in
\eref{eq:nem_op_H}.  Similar to the procedure in section~\ref{sec:exptrich} one
proves that \eref{eq:fixedpoint_d} has a unique fixed point.
\end{proof}
We define functions $B_{Y_\gamma\times Y_\gamma}(0,\tilde\mathfrak{c})\times
B_{U_l\times U_r}(0,\bar\mathfrak{c})\times B_{\R^m}(0,\bar\mathfrak{c})\to
B_{Y_\gamma}(0,\bar\epsilon)$
  \begin{equation}\label{eq:def_b_perp}
  \eqalign{
    b_\perp^+(b,u,\lambda) &:= \left( \id -
    Q^+(u_l,\lambda)(0) \right) w^+_\bn(b,u,\lambda)(0),\\
    b_\perp^-(b,u,\lambda) &:= \left( \id -
    Q^-(u_r,\lambda)(0) \right) w^-_\bn(b,u,\lambda)(0).
  }
  \end{equation}
First note that $b_\perp^\pm$ depend smoothly on $(b,u,\lambda)$, and note
further that $b_\perp^\pm$ depend also on $\bn$.  The `size' of these functions
is closely related to the `size' of the jumps $\xi_l$ and $\xi_r$.

In accordance with hypothesis~\ref{hyp:leading_ev} we assume
\begin{hyp}\label{hyp:ev_p_gamma}
{\rm  The leading
stable and unstable eigenvalues $\mu^{s/u}$ of $p_\gamma$ are real and simple. }
\end{hyp}
Note that here $\mu^{s/u}$ denote the eigenvalues of $p_\gamma$ (and not the
Floquet exponents of $\gamma$ as in section~\ref{sec:onelink}).
\begin{hyp}\label{hyp:approach_q}
{\rm $q_d^+(u_l,\lambda)(\cdot)$ and $q_d^-(u_r,\lambda)(\cdot)$ approach
$p_\gamma$ along the leading stable and unstable direction, respectively.}
\end{hyp}

\begin{hyp}\label{hyp:approach_b_perp}
 {\rm $b_\perp^+(b,u,\lambda)$ and $b_\perp^-(b,u,\lambda)$ are not in the
 strong stable subspace of the adjoint of the
 variational equation along $q^+(u_l,\lambda)(\cdot)$ and
 $q^-(u_r,\lambda)(\cdot)$, respectively, for $n=0$.}
\end{hyp}

\begin{lem}\label{lem:lambdalemma_b}
Assume hypotheses~\ref{hyp:ev_p_gamma}--\ref{hyp:approach_b_perp}. Further,
  let the assumptions of lemma~\ref{lem:lambdalemma} hold.  
  There are functions $c^{s/u} = c^{s/u}(b,u,\lambda)$ such that
  \begin{equation}
    \label{eq:est_bperp}
  \eqalign{
    \abs{ b_\perp^+(b,u,\lambda) } &= c^u(b,u,\lambda) \left( \mu^u \right)^{-\nu} +
    o( | \mu^u |^{-\nu} ),\\
    \abs{ b_\perp^-(b,u,\lambda) } &= c^s(b,u,\lambda) \left( \mu^s \right)^{\nu} +
    o( | \mu^s |^{\nu} ).
  }
  \end{equation}
  There is a constant $C>0$ such that
  \begin{equation}
    \label{eq:est_bperpder}
    \abs{D_1 b_\perp^+(b,u,\lambda) } \le C | \mu^u |^{-\nu},\qquad
    \abs{D_1 b_\perp^-(b,u,\lambda) } \le C | \mu^s |^{\nu}.
   \end{equation}
  The functions $c^{u/s}$ are smooth and $c^{u/s}(0,0,0)\not=0$.  The $o(\cdot)$-terms are valid for $\nu$ tending to infinity.
\end{lem}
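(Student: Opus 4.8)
The plan is to track the two partial orbits $w^\pm_\bn$ solving the fixed point equation \eref{eq:fixedpoint_d}, push the information down to $b_\perp^\pm$ defined in \eref{eq:def_b_perp}, and extract the leading-order asymptotics from the exponential dichotomy of the variational equation along $q_d^\pm$. First I would use the variation-of-constants (summation) formula for the difference equation \eref{eq:dlinvar_inh} together with the dichotomy projections $Q^\pm$ to derive a representation of $(\id-Q^+(u_l,\lambda)(0))w^+_\bn(b,u,\lambda)(0)$ analogous to \eref{eq:linop}: it is a transition term from $n=\nu^+$ (carrying a factor of order $|\mu^u|^{-\nu^+}$ by hypothesis~\ref{hyp:approach_q}, since $q_d^+$ approaches $p_\gamma$ along the leading unstable direction, and the coupling condition (${\mathcal C}$) involves $d_\bn$ which is $O(|\mu^s|^{\nu^+})+O(|\mu^u|^{-\nu^-})$ — the dominant piece sitting in the unstable fiber) plus a convolution term against ${\mathcal H}(w,u,\lambda)$. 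The nonlinear term ${\mathcal H}$ is quadratically small in $w$, and the estimates from the proof of lemma~\ref{lem:lambdalemma} give $\norm{w_\bn}\lesssim\norm{b}+\norm{d_\bn}$, so the convolution term is of strictly higher order than the linear transition term. Collecting these, $|b_\perp^+|=c^u(b,u,\lambda)|\mu^u|^{-\nu}+o(|\mu^u|^{-\nu})$ with $c^u$ built from the leading unstable Floquet data; the symmetric argument with $n\to-\nu^-$ on $\R^-$ gives the $b_\perp^-$ statement. Recall $\nu^++\nu^-=\nu$, so the exponents combine correctly.

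For the sharp leading coefficient one must isolate the one-dimensional leading eigendirection. Under hypothesis~\ref{hyp:ev_p_gamma} the leading unstable eigenvalue $\mu^u$ of $p_\gamma$ is real and simple, so $\id-Q^+$ splits further into a leading rank-one piece and a strong-stable remainder decaying like $|\mu^u_2|^{-\nu}$ with $|\mu^u_2|>|\mu^u|$; hypothesis~\ref{hyp:approach_b_perp} guarantees that $b_\perp^+(b,u,\lambda)$ has nonzero component on the adjoint leading direction, i.e. the coefficient $c^u(b,u,\lambda)$ does not vanish identically, and smoothness of $c^u$ in $(b,u,\lambda)$ is inherited from the smooth dependence of $w_\bn$ (lemma~\ref{lem:lambdalemma}) and of the dichotomy data. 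Evaluating at $(b,u,\lambda)=(0,0,0)$, where $q_d^+$ is exactly the heteroclinic connection meeting $W^u_\Pi(p_\gamma)$ along the leading direction, gives $c^u(0,0,0)\neq0$; this is essentially the statement that the transversal-intersection quantity driving the classical Shilnikov/$\lambda$-lemma estimate is nonzero, cf.~\cite{Knobloch2004, Deng1989}.

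The derivative estimate \eref{eq:est_bperpder} follows by differentiating the representation for $b_\perp^+$ with respect to $b$. The $b$-dependence enters linearly through the boundary condition (${\mathcal B}_w$) and, via $w_\bn$ and ${\mathcal H}$, nonlinearly; but $\norm{D_1 w_\bn}$ is bounded uniformly (as in the proof of lemma~\ref{lem:link_v_b}, from the contraction estimate $\norm{D_1{\mathcal H}}<\tfrac{1}{7KC}$), and each occurrence carries the transition factor $|\mu^u|^{-\nu}$, so $|D_1 b_\perp^+|\le C|\mu^u|^{-\nu}$ with $C$ independent of $\nu$; likewise for $b_\perp^-$.

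The main obstacle I anticipate is not the bookkeeping of the exponential rates — that is routine given lemma~\ref{lem:lambdalemma} and its discrete exponential dichotomy — but rather verifying cleanly that the nonlinear contribution ${\mathcal H}(w_\bn,u,\lambda)$ and the sub-leading (strong-stable) part of the linear term really are $o(|\mu^u|^{-\nu})$ uniformly in $(b,u,\lambda)$, and simultaneously that the leading coefficient $c^u$ is genuinely nonzero at the origin; both require carefully matching the half-orbit splitting at $\nu^\pm=\lfloor\nu/2\rfloor$ with the coupling condition (${\mathcal C}$) so that no cross term of the same order $|\mu^u|^{-\nu}$ is overlooked. This is exactly the point where one must lean on the structure of $d_\bn$ in \eref{eq:def_d} and on hypotheses~\ref{hyp:approach_q}--\ref{hyp:approach_b_perp}.
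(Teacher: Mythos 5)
Your proposal follows essentially the same route as the paper: variation of constants for the discrete equation \eref{eq:dlinvar_inh}, the exponential dichotomy along $q_d^\pm$, and the key observation that the leading term is a \emph{product} of the backward transition factor of order $(\mu^u)^{-\nu^+}$ and the boundary datum $\beta^+$ of order $(\mu^u)^{-\nu^-}$ governed by $d_\bn$, with hypotheses~\ref{hyp:ev_p_gamma}--\ref{hyp:approach_b_perp} supplying the nonvanishing leading coefficients and $\nu^++\nu^-=\nu$ combining the rates; the nonlinear sum $\mathcal{S}$ is lower order exactly as you argue. The only point where you are looser than the paper is the claim $c^u(0,0,0)\neq 0$: the paper identifies $c^u=\langle\eta^+,\eta^u\rangle$ with $\eta^+$ an (adjoint) eigenvector of $(D_1\Pi^{-1})^T$ and $\eta^u$ an eigenvector of $D_1\Pi$ for the simple eigenvalue $\mu^u$, and then invokes the linear-algebra fact that left and right eigenvectors of a simple eigenvalue cannot be orthogonal --- merely knowing both factors are nonzero (your appeal to hypotheses~\ref{hyp:approach_q} and~\ref{hyp:approach_b_perp}) does not by itself exclude orthogonality of the inner product.
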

The estimates in~\cite{Deng1989} (applied to discrete systems) already provide
that $\abs{b_\perp^+(b,u,\lambda)} = \mathcal{O}( | \mu^u |^{-\nu} )$, cf. also corollary~\ref{cor:lambdalemma_b}, but
they do not give information about the leading term.  However, this information
is important for the jump estimates and consequently for the construction of
bifurcation equations.  Note that the information about leading terms of the
derivatives in~\eref{eq:est_bperpder} is not needed for our purposes here, but
can be computed in a similar manner as in~\cite{Knobloch2004, Sandstede1993}.

\begin{proof}
With $\hat{b}_\perp^+=\hat{b}_\perp^+(b,u,\lambda):= b_\perp^+(b,u,\lambda)/|b_\perp^+(b,u,\lambda)|$
we can write
  \begin{equation}
    \label{eq:bscalarproduct}
   \abs{b_\perp^+(b,u,\lambda)} =\left\langle \hat{b}_\perp^+,
    b_\perp^+(b,u,\lambda) \right\rangle.
  \end{equation}

Note that $(w^-_\bn,w^+_\bn)$ solves \eref{eq:dnonlinvar}. So, applying the
variation of constants formula to \eref{eq:dlinvar_inh} and replacing there
$g^\pm$ by $h^\pm$ finally provides
\begin{equation*}
    \fl\eqalign{
    b_\perp^+(b,u,\lambda) =& \Psi^+(0,\nu^+) \beta^+ \\
    &- \underbrace{\sum_{m=1}^{\nu^+}
    \Psi^+(0,m) (\id - Q^+(0)) h^+(m-1,w^+(m-1),u_l,\lambda)}.\\
    &\hspace*{5cm} =: \mathcal{S}
    }
  \end{equation*}
Here $\Psi^+(\cdot,\cdot)$ is the transition matrix of the homogeneous equation
of \eref{eq:dlinvar_inh}. Note that $\Psi^+$ depends on $u_l$ and $\lambda$.
Further, $\beta^+$ is defined by the boundary condition ($\mathcal{B}_\beta$).

  Replacing $b_\perp^+$ in the scalar product~\eref{eq:bscalarproduct} yields
   \begin{equation}
    \label{eq:bscalarproduct2}
    \eqalign{
    \langle \hat{b}_\perp^+, b_\perp^+(b,u,\lambda) \rangle  &=
    \langle \hat{b}_\perp^+, \Psi^+(0,\nu^+) \beta^+ \rangle -\langle \hat{b}_\perp^+, \mathcal{S}
    \rangle\\
    &= \langle \Psi^+(0,\nu^+)^T (\id - Q^+(0))^T \hat{b}_\perp^+, \beta^+ \rangle - \langle \hat{b}_\perp^+, \mathcal{S}
    \rangle.
    }
  \end{equation}
Considerations similar to those in \cite{Knobloch2004,Sandstede1993} show that
the leading-order term of $b_\perp^+(b,u,\lambda)$ will be determined by
$\langle \hat{b}_\perp^+, \Psi^+(0,\nu^+) \beta^+ \rangle$ or $\langle
\Psi^+(0,\nu^+)^T (\id - Q^+(0))^T \hat{b}_\perp^+, \beta^+ \rangle$,
respectively.  Note in this respect that, due to the coupling condition
($\mathcal{B}_\beta$), the quantity $\beta^+$ depends on $\bn$.

Computations in~\cite{Knobloch2004,Sandstede1993} show that under
hypothesis~\ref{hyp:approach_b_perp}
  \begin{equation}\label{eq:est_b}
    \Psi^+(0,\nu^+)^T (\id - Q^+(0))^T \hat{b}_\perp^+ = \eta^+(b,u,\lambda) \left( \mu^u \right)^{-\nu^+} +
    o( | \mu^u |^{-\nu^+} ),
  \end{equation}
  where $\eta^+(b,u,\lambda)\not=0$ is a certain eigenvector of
  $(D_1\Pi(p_\lambda,\lambda)^{-1})^T$ belonging to $(\mu^u)^{-1}$.

Next we consider $\beta^+$.  Combining ($\mathcal{B}_\beta$), ($\mathcal{B}_d$)
and \eref{eq:def_d} yields
  \begin{equation}
    \label{eq:betapm}
    \eqalign{
    \beta^+ - \beta^- &= q_d^-(u_r,\lambda)(-\nu^-) -
    q_d^+(u_l,\lambda)(\nu^+)\\
    &\quad\; - Q^+(\nu^+)w^+(\nu^+) + Q^-(-\nu^-)w^-(-\nu^-).
    }
  \end{equation}
  We define projections $\tilde{Q}(\nu)=\tilde{Q}(u,\lambda)(\nu)$ by,
  cf.~\cite{Knobloch2004} for their existence, 
\[
 \im\,\tilde{Q}(\nu) =\im (\id - Q^+(\nu^+))
\quad {\rm and}\quad 
\ker\, \tilde{Q}(\nu) = \im (\id -Q^-(-\nu^-)).
\]
Applying $\tilde{Q}(\nu)$ to~\eref{eq:betapm} 
yields
\begin{equation}\label{eq:rep_beta}
 \eqalign{
    \beta^+  &=\tilde{Q}(\nu)(q_d^-(u_r,\lambda)(-\nu^-) -
    q_d^+(u_l,\lambda)(\nu^+))\\
    &\quad\; -\tilde{Q}(\nu) Q^+(\nu^+)w^+(\nu^+) +\tilde{Q}(\nu) Q^-(-\nu^-)w^-(-\nu^-).
    }
\end{equation}
In \cite{Knobloch2004} it has been shown that the leading-order term of the
right-hand side of \eref{eq:rep_beta} is determined by the first addend, and
estimates given there reveal that under hypothesis~\ref{hyp:approach_q} we have
that
  \begin{equation}\label{eq:est_beta}
   \beta^+ = \eta^u(u,\lambda) (\mu^u)^{-\nu^-} +
    o( | \mu^u |^{-\nu^-} ),
  \end{equation}
  where $\eta^u(u_l,\lambda)\not=0$ is a certain eigenvector of
  $D_1\Pi(p_\lambda,\lambda)$ belonging to $\mu^u$.

Combining \eref{eq:bscalarproduct2}, \eref{eq:est_b} and \eref{eq:est_beta} provides
\begin{equation*}
 \langle \hat{b}_\perp^+, b_\perp^+(b,u,\lambda)\rangle=\langle \eta^+(b,u,\lambda),\eta^u(u,\lambda)\rangle (\mu^u)^{-\nu}
+o( | \mu^u |^{-\nu^-}).
\end{equation*}
Here we also used that $\langle \hat{b}_\perp^+, \mathcal{S} \rangle=o( | \mu^u
|^{-\nu} )$;
  we refer again to~\cite{Knobloch2004, Sandstede1993} for details of
  the necessary computations.  

Finally, from the definition of $\eta^+$ and $\eta^u$ it follows that 
\begin{equation}\label{eq:def_c^u}
c^u(b,u,\lambda) := \langle \eta^+(b,u,\lambda), \eta^u(u,\lambda) \rangle\not=0. 
\end{equation}
More precisely, linear algebra shows that $\eta^+$ and $\eta^u$ cannot be orthogonal.
Further, due to hypotheses~\ref{hyp:approach_q} and \ref{hyp:approach_b_perp}
both $\eta^+$ and $\eta^u$ are different from zero. 

Similar computations yield the statement on $\abs{b_\perp^-}$.

The smoothness of $c^{u/s}$ follows from the smoothness of $\eta^+$ and $\eta^u$.
The estimates of the derivatives follow immediately from the considerations
  in~\cite{Deng1989}.
\end{proof}

The following is an immediate consequence of lemma~\ref{lem:lambdalemma_b}:
\begin{cor}\label{cor:lambdalemma_b}
 Let the assumptions of lemma~\ref{lem:lambdalemma_b} hold. Then there is a
 constant $C$ such that for all $(b,u,\lambda)\in B_{Y_\gamma\times
 Y_\gamma}(0,\tilde\mathfrak{c})\times B_{U_l\times
 U_r}(0,\bar\mathfrak{c})\times B_{\R^m}(0,\bar\mathfrak{c})$ 
 \[
 \abs{b_\perp^+(b,u,\lambda)}<C(\mu^u)^{-\nu}, \quad
 \abs{b_\perp^-(b,u,\lambda)}<C(\mu^s)^{-\nu}.  
 \]
\end{cor}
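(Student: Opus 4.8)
The plan is to obtain the bound directly from the sharp asymptotics already established in lemma~\ref{lem:lambdalemma_b}, of which the corollary is just the qualitative shadow: one replaces the precise leading terms by a single constant that is uniform in all the data. First I would note that $c^{u/s}=c^{u/s}(b,u,\lambda)$, being smooth and in particular continuous on the closed (hence compact) balls $B_{Y_\gamma\times Y_\gamma}(0,\tilde\mathfrak{c})\times B_{U_l\times U_r}(0,\bar\mathfrak{c})\times B_{\R^m}(0,\bar\mathfrak{c})$, are bounded there, say $\abs{c^u},\abs{c^s}\le C_0$. Next, inspecting the proof of lemma~\ref{lem:lambdalemma_b}, the remainder terms in \eref{eq:est_bperp} are in fact $o(\abs{\mu^u}^{-\nu})$ and $o(\abs{\mu^s}^{\nu})$ \emph{uniformly} in $(b,u,\lambda)$ over those balls, since the Deng-type / exponential-dichotomy estimates on which they rest hold with constants uniform in $(u,\lambda)$ on small balls, cf.~\cite{Deng1989,Knobloch2004,Sandstede1993}. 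Enlarging $N$ so that these remainders are $\le C_0\abs{\mu^u}^{-\nu}$, respectively $\le C_0\abs{\mu^s}^{\nu}$, for all $\nu>N$, I obtain $\abs{b_\perp^+(b,u,\lambda)}\le 2C_0\abs{\mu^u}^{-\nu}$ and $\abs{b_\perp^-(b,u,\lambda)}\le 2C_0\abs{\mu^s}^{\nu}$; since $\abs{\mu^s}<1$ the latter is in particular $\le 2C_0\abs{\mu^s}^{-\nu}$, which gives the corollary with $C:=2C_0$.

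Alternatively --- and this is, in essence, the classical Deng estimate for the discrete boundary value problem of lemma~\ref{lem:lambdalemma} --- I would bypass \eref{eq:est_bperp} and read the $\mathcal{O}$-bound off the representation $b_\perp^+(b,u,\lambda)=\Psi^+(0,\nu^+)\beta^+-\mathcal{S}$ obtained in the proof of lemma~\ref{lem:lambdalemma_b}: the Deng-type estimates control $\beta^+$ (through \eref{eq:rep_beta} and \eref{eq:est_beta}) and $\mathcal{S}$ (through the exponential dichotomy bounds together with the smallness $\norm{w^\pm_{\bn}}\le\bar{\epsilon}$ furnished by the fixed-point construction) by $\mathcal{O}(\abs{\mu^u}^{-\nu})$ with constants uniform in $(b,u,\lambda)$, and likewise for $b_\perp^-$. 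Since this route never passes through the pointwise-in-parameters formulation \eref{eq:est_bperp}, the uniform constant $C$ drops out with no further argument --- which is also why the remark preceding lemma~\ref{lem:lambdalemma_b} already attributes the $\mathcal{O}$-statement to \cite{Deng1989}.

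The only point that genuinely needs attention --- and the main obstacle, such as it is --- is exactly this uniformity in $(b,u,\lambda)$ and in $\nu$: lemma~\ref{lem:lambdalemma_b} is stated with the parameters held fixed, so to legitimately extract a single $C$ valid simultaneously for all $\nu>N$ and all $(b,u,\lambda)$ in the compact balls one must verify that the error terms are controlled uniformly, and this is a statement about the \emph{proof} of that lemma rather than a formal consequence of its statement. Everything else is routine: boundedness of a continuous function on a compact set, enlarging $N$, and --- for the stated bound on $b_\perp^-$ --- the trivial remark that $\abs{\mu^s}^{\nu}\le\abs{\mu^s}^{-\nu}$ because $\abs{\mu^s}<1$, so that the sharper estimate of lemma~\ref{lem:lambdalemma_b} already implies the (weaker) one written in the corollary.
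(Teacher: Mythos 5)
Your proposal is correct and follows essentially the same route as the paper, which offers no proof at all and simply declares the corollary ``an immediate consequence'' of lemma~3.11; your elaboration (uniform boundedness of the smooth coefficients $c^{u/s}$ on the closed parameter balls, uniformity of the $o(\cdot)$-remainders traced back to the Deng-type estimates, and enlarging $N$) is precisely what that ``immediate'' is standing in for. Your observation that the corollary's exponent $(\mu^s)^{-\nu}$ is weaker than the lemma's $(\mu^s)^{\nu}$ (and hence implied by it since $\abs{\mu^s}<1$) is also apt --- in fact the stated bound is almost certainly a typo for $\abs{\mu^s}^{\nu}$, since the decaying bound is what is actually used later in the contraction argument of lemma~3.14.
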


\subsection{The coupling within $\Sigma_\gamma$}\label{coup_in_Sigma}
\label{sec:couplinginsigma}

Now we have all the ingredients to couple two pairs of solutions $(x_l^-,x_l^+)$
and $(x_r^-,x_r^+)$ of the continuous system with a solution
$y$ of the discrete system, effectively combining the
solutions $x_l^+$, $x_r^-$ and $y$ into one solution.

We fix the times $\omega_l^+$ and $\omega_r^-$ at sufficiently large values by
using a fixed Poincar\'e section $\Sigma_\gamma$ and then switch to the discrete
dynamical system to describe the dynamics near the periodic orbit.  To reflect
the nature of this setting we rename $\omega_l^+$ as $\Omega^+$ and $\omega_r^-$
as $\Omega^-$.  We choose $\Omega^\pm$ such that $q^+(u_l,\lambda)(\Omega^+),
q^-(u_r,\lambda)(-\Omega^-) \in \Sigma_\gamma$.

In our analysis we consider $x_{l/r}^\pm$ as perturbations of $q_{l/r}^\pm$: 
\[
x_{l/r}^\pm=q_{l/r}^\pm(u_{l/r},\lambda)+v_{l/r}^\pm(u_{l/r},\lambda).  
\]
Further, we represent, as in the previous section, $y$ as a couple of solutions
$(y^+,y^-)$, cf. \eref{eq:def_y_pm_a}, \eref{eq:def_y_pm_b}, which are written
in the form \eref{eq:def_y_pm_c}. The coupling is performed by searching
for $x_l$, $x_r$ and $y$ such that \[ x_l^+(\Omega^+)=y(0), \quad
x_r^-(-\Omega^-)=y(\nu), \] or equivalently in terms of the perturbations 
  \begin{description}
    \item[($C$)] $v_l^+(a_l,u_l,\lambda)(\Omega^+) = w^+(b,u_l,\lambda)(0)$,\quad
      $v_r^-(a_r,u_r,\lambda)(-\Omega^-) = w^-(b,u_r,\lambda)(0)$.
  \end{description}
The orbit $x_m$ is the suspension of the orbit $y$ (see theorem~\ref{thm:lhc}). Note
that the transition time $\tau$ is essentially determined by the `length $\nu$'
of the orbit $y$.

For the actual coupling analysis inside the Poincar\'e section
$\Sigma_\gamma$, we have to impose a technical assumption.
\begin{hyp}
 \label{hyp:scaling}
{\rm All solutions of~\eref{eq:system} in a sufficiently small neighborhood of
 $q_l^+$ have the same transition time $\Omega^+$ from $\Sigma_l$ to
 $\Sigma_\gamma$.  Similarly, all solutions in a sufficiently small neighborhood of
 $q_r^-$ have the same transition time $\Omega^-$ from $\Sigma_\gamma$ to
 $\Sigma_r$.}
\end{hyp}
This can be achieved simultaneously by a scaling of the vector field in
a tubular neighborhood along $q_l^+$/$q_r^-$.  Note that this
scaling does not influence any of the previous results; see~\cite{Riess2008} for
a similar computation.

Hypothesis~\ref{hyp:scaling} 
guarantees that the points $q_l^+(u_l,\lambda)(\Omega^+) +
v_l^+(u_l,\lambda)(\Omega^+)$ and $q_r^-(u_r,\lambda)(-\Omega^-) +
v_r^-(u_r,\lambda)(-\Omega^-)$ are both in $\Sigma_\gamma$.
Further, this hypothesis allows to determine, cf. also \eref{eq:projectionplus} and \eref{eq:projectionminus}:
\begin{equation}\label{eq:projections}
\begin{array}{lcl}
\ker Q^+(u_l,\lambda)(0)&=&\Phi^+_l(u_l,\lambda)(\Omega^+,0)(W^-_l\oplus Z_l)
\\
&=&\ker P^+(u_l,\lambda)(\Omega^+),
\\[1ex]
\ker Q^-(u_r,\lambda)(0)&=&\Phi^-_r(u_r,\lambda)(\Omega^-,0)(W^-_r\oplus Z_r)
\\
&=&\ker P^-(u_r,\lambda)(\Omega^-),
\end{array}
\end{equation}
and~\eref{eq:def_imQ} implies
\begin{equation}\label{eq:imQ_subset}
 \begin{array}{ll}
\im Q^+(u_l,\lambda)(0) \subset \im P^+_{l}(u_l,\lambda)(\Omega^+),
\\[1ex]
\im Q^-(u_r,\lambda)(0) \subset \im P^-_{r}(u_r,\lambda)(-\Omega^-).
\end{array}
\end{equation}
An immediate consequence of~\eref{eq:projections} and~\eref{eq:imQ_subset} is
(cf. also the explanations following lemma~\ref{lem:exptrich}):
\begin{lem}\label{lem:iso_Ps}
 The restriction $P^+_{s,l}(u_l,\lambda)(\Omega^+)\left|_{{\rm im}\,
 Q^+(u_l,\lambda)(0)}\right.$ acts as an isomorphism ${\rm im}\,
 Q^+(u_l,\lambda)(0)\to {\rm im}\, P^+_{s,l}(u_l,\lambda)(\Omega^+)$. Moreover,
 for all $v^+\in Y_\gamma$ we have 
 \[
\begin{array}{l}
 P^+_{s,l}(u_l,\lambda)(\Omega^+)Q^+(u_l,\lambda)(0)v^+=P^+_{s,l}(u_l,\lambda)(\Omega^+)v^+,
\\[1ex]
(\id -Q^+(u_l,\lambda)(0))v^+=(\id-P^+_l(u_l,\lambda)(\Omega^+))v^+.
\end{array}
\] 
Similarly $P^-_{u,r}(u_r,\lambda)(-\Omega^-)\left|_{{\rm im}\,
Q^-(u_r,\lambda)(0)}\right.: {\rm im}\, Q^-(u_r,\lambda)(0)\to {\rm im}\,
P^-_{u,r}(u_,\lambda)(\Omega^-)$ is an isomorphism, and  for all $v^-\in
Y_\gamma$ we have \[
\begin{array}{l}
 P^-_{u,r}(u_r,\lambda)(-\Omega^-)Q^-(u_r,\lambda)(0)v^-=P^-_{u,r}(u_r,\lambda)(-\Omega^-)v^-,
\\[1ex]
(\id -Q^-(u_r,\lambda)(0))v^-=(\id-P^-_r(u_r,\lambda)(-\Omega^-))v^-.
\end{array}
\]
\end{lem}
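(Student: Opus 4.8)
The plan is to read the assertion as a purely linear-algebraic consequence of \eref{eq:projections} and \eref{eq:imQ_subset}: once restricted to $Y_\gamma$, the projection $Q^+(u_l,\lambda)(0)$ coincides with $P^+_l(u_l,\lambda)(\Omega^+)$, and the passage to the strong-stable part $P^+_{s,l}(u_l,\lambda)(\Omega^+)$ only removes the single flow direction by which $\im P^+_l(u_l,\lambda)(\Omega^+)$ exceeds $\im Q^+(u_l,\lambda)(0)$. To keep the notation light I write $Q := Q^+(u_l,\lambda)(0)$, $P := P^+_l(u_l,\lambda)(\Omega^+)$, $P_s := P^+_{s,l}(u_l,\lambda)(\Omega^+)$, $P_c := P^+_{c,l}(u_l,\lambda)(\Omega^+)$ and $\varphi := f(q_l^+(u_l,\lambda)(\Omega^+),\lambda)$. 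Recall from the discussion following lemma~\ref{lem:exptrich} that, since $\gamma$ is a periodic orbit, $\delta^c=0$, the image $\im P_c = \mathrm{span}\{\varphi\}$ is one-dimensional, and $\ker P_s = \ker P\oplus\im P_c$ with $P = P_s + P_c$, $P_sP_c = P_cP_s = 0$; moreover $\im P = T_{q_l^+(u_l,\lambda)(\Omega^+)}W^s_\lambda(\gamma)$ contains $\varphi$.

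First I would derive the two displayed identities. Fix $v^+\in Y_\gamma$ and write $v^+ = Qv^+ + (\id-Q)v^+$. By \eref{eq:projections}, $(\id-Q)v^+\in\ker Q = \ker P\subset\ker P_s$. By \eref{eq:imQ_subset}, $Qv^+\in\im Q\subset\im P$, so $PQv^+ = Qv^+$. Applying $P$ to the splitting gives $Pv^+ = Qv^+$, i.e.\ $(\id-Q)v^+ = (\id-P)v^+$; applying $P_s$ gives $P_sv^+ = P_sQv^+$. These are the two identities.

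Next I would show $P_s|_{\im Q}\colon\im Q\to\im P_s$ is an isomorphism. For injectivity, suppose $v\in\im Q$ and $P_sv=0$; then $v\in\ker P_s = \ker P\oplus\mathrm{span}\{\varphi\}$, say $v = v_0 + t\varphi$ with $v_0\in\ker P = \ker Q\subset Y_\gamma$. Since $q_l^+(u_l,\lambda)(\Omega^+)\in\Sigma_\gamma$ and $\Sigma_\gamma$ is a cross-section of $\gamma$, $\varphi\notin Y_\gamma$, whereas $v,v_0\in Y_\gamma$; hence $t=0$ and $v = v_0\in\im Q\cap\ker Q = \{0\}$. For surjectivity, note that $\ker P = \ker Q$ and $\dim Y_\gamma = n-1$ give $\dim\im P = \dim\im Q + 1$, and since $\im Q\subset\im P$ while $\varphi\in\im P$ with $\varphi\notin\im Q$ (because $\im Q\subset Y_\gamma\not\ni\varphi$), this forces $\im P = \im Q\oplus\mathrm{span}\{\varphi\}$. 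Hence any $w\in\im P_s\subset\im P$ can be written $w = v + t\varphi$ with $v\in\im Q$, and then $P_sw = P_sv$ (as $\varphi\in\ker P_s$) while $P_sw = w$ (as $w\in\im P_s$), so $w = P_sv$. The assertion for $P^-_{u,r}(u_r,\lambda)(-\Omega^-)$ follows verbatim from the time-reversed argument, using the lower lines of \eref{eq:projections} and \eref{eq:imQ_subset} and the exponential trichotomy on $\R^-$ along $q_r^-(u_r,\lambda)$.

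The argument has no genuinely hard step; the point requiring care is keeping track of the ambient spaces — $Q$ is a projection on $Y_\gamma$ whereas $P$, $P_s$, $P_c$ act on $\R^n$ — so that the compatibility relations $\ker P = \ker Q\subset Y_\gamma$ and $\im Q\subset\im P$ coming from \eref{eq:projections}--\eref{eq:imQ_subset} are available. These relations in turn rest on hypothesis~\ref{hyp:scaling}, which makes $\Omega^+$ a common transition time so that $\Phi^+_l(u_l,\lambda)(\Omega^+,0)(W^-_l\oplus Z_l)$ indeed lies in the section $Y_\gamma$.
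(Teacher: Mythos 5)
Your argument is correct and is exactly the route the paper intends: the lemma is stated there as an immediate consequence of \eref{eq:projections}, \eref{eq:imQ_subset} and the trichotomy structure described after lemma~\ref{lem:exptrich}, with no further proof given. You have simply supplied the linear-algebra details (the identification $\im P_c^+=\mathrm{span}\{f(q_l^+(\Omega^+),\lambda)\}$ transverse to $Y_\gamma$, plus the dimension count), all of which check out.
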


Further, we use the notation
\[
{\mathcal U} := U_l \times U_r, \quad  {\mathcal U}\ni u= (u_l,u_r).
\]
The following lemma is a reformulation of theorem~\ref{thm:lhc} in terms of the
perturbances $v_l^\pm$, $v_r^\pm$ and $w^\pm$.
\begin{lem}
  \label{lem:couplingper}
Fix $\omega^+,\omega^->0$.
  There are constants $\Omega^-,\Omega^+>0$, $N\in\N$, $c>0$ such that
  for all $\abs{\lambda}<c$, $u\in {\mathcal U}$, $\norm{u}<c$,
  $\nu>N$, and for given sufficiently small $a_l^-,a_r^+\in \R^n$ there are
  $b\in Y_\gamma\times Y_\gamma$ and $a_l^+,a_r^-\in\R^n$
such that
  \begin{description}
    \item[($B_l^-$)] $(\id - P^-_l(u_l,\lambda)(-\omega^-)
      (v_l^-(a_l,u_l,\lambda)(-\omega^-) - a_l^-)=0$,
    \item[($B_r^+$)] $(\id - P^+_r(u_r,\lambda)(\omega^+) (v_r^+(a_r,u_r,\lambda)(\omega^+)
      - a_r^+)=0$,
    \item[($C$)] $v_l^+(a_l,u_l,\lambda)(\Omega^+) = w^+(b,u_l,\lambda)(0)$,\quad
      $v_r^-(a_r,u_r,\lambda)(-\Omega^-) = w^-(b,u_r,\lambda)(0)$.
  \end{description}
\end{lem}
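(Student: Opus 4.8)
The plan is to set this up as a fixed point problem in the `coupling variables' $b$ and the auxiliary boundary data $a_l^+, a_r^-$, treating the short Lin orbit segments $X_l, X_r$ from theorem~\ref{thm:linbvp} (or rather their perturbation parts $v_l^\pm, v_r^\pm$ from lemma~\ref{lem:link_v}) and the discrete orbit $w^\pm$ from lemma~\ref{lem:lambdalemma} as given, smoothly parametrised building blocks. Concretely, with $a_l^-$ and $a_r^+$ prescribed and small, the free data on the left segment are $a_l^+$ (which determines $v_l^+$ via ($B_v^+$) with $\omega^+$ replaced by $\Omega^+$), on the right segment $a_r^-$ (determining $v_r^-$ via ($B_v^-$) with $\omega^-$ replaced by $\Omega^-$), and on the discrete piece the Shilnikov data $b = (b^+, b^-)$. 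The coupling condition ($C$) reads $v_l^+(a_l,u_l,\lambda)(\Omega^+) = w^+(b,u_l,\lambda)(0)$ and $v_r^-(a_r,u_r,\lambda)(-\Omega^-) = w^-(b,u_r,\lambda)(0)$, two equations in $Y_\gamma \times Y_\gamma$; since $a_l^+, a_r^-, b^+, b^-$ together also live in (copies of) $Y_\gamma$, this is a square system and the natural strategy is to split each side along the $Q^\pm / (\id - Q^\pm)$ decomposition in $\Sigma_\gamma$ and solve.

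First I would fix $\Omega^\pm$ by the requirement $q^+(u_l,\lambda)(\Omega^+), q^-(u_r,\lambda)(-\Omega^-) \in \Sigma_\gamma$, invoke hypothesis~\ref{hyp:scaling} so that the whole tube of nearby solutions hits $\Sigma_\gamma$ at the same time, and record the compatibility of projections \eref{eq:projections}, \eref{eq:imQ_subset} and lemma~\ref{lem:iso_Ps}. Then I would apply $Q^+(u_l,\lambda)(0)$ to the first equation of ($C$): using lemma~\ref{lem:iso_Ps}, $P^+_{s,l}(\Omega^+)$ restricted to $\im Q^+(0)$ is an isomorphism, so the $Q^+(0)$-component of $v_l^+(\Omega^+)$ is essentially controlled by $a_\perp^+$ from lemma~\ref{lem:link_v_b}, while the $(\id - Q^+(0))$-component of $v_l^+(\Omega^+)$ equals the $(\id - P^+_l(\Omega^+))$-component, which by ($B_v^+$) is pinned to $a_l^+$ up to the transition matrix $\Phi^+_l(\Omega^+, 0)$. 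On the discrete side, $Q^+(0) w^+(0) = Q^+(0) b^+$ by (${\mathcal B}_w$) and $(\id - Q^+(0)) w^+(0) = b_\perp^+(b,u,\lambda)$ which is exponentially small in $\nu$ by lemma~\ref{lem:lambdalemma_b}. Matching components, the equation for $a_l^+$ becomes solvable (it is, to leading order, just reading off $a_l^+$ in terms of $b_\perp^+$ and the $Q$-part), and one is left with an equation determining $b^+$ in terms of $a_l^-$ and the small remainder terms; symmetrically for $a_r^-$ and $b^-$ on the right. Collecting these, ($C$) together with ($B_l^-$), ($B_r^+$) reduces to a fixed point equation $(b^+, b^-) = \mathcal{G}(b^+, b^-, a_l^-, a_r^+, u, \lambda)$ whose right-hand side is a sum of a uniformly Lipschitz-small contribution (from the smooth dependence of $v_{l/r}^\pm$ on their data, with small Lipschitz constant once $c$ is small) and a contribution that is $\mathcal{O}((\mu^u)^{-\nu}) + \mathcal{O}((\mu^s)^{\nu})$ (from $b_\perp^\pm$); hence $\mathcal{G}$ is a contraction on a small ball for $c$ small and $\nu > N$ large, and the Banach fixed point theorem gives existence and uniqueness of $b$, and then of $a_l^+, a_r^-$. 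Smooth dependence on $(u,\lambda)$ (and on $a_l^-, a_r^+$) follows from the implicit function theorem applied at the fixed point, using the smoothness already established in lemmas~\ref{lem:link_v}, \ref{lem:link_v_b}, \ref{lem:lambdalemma}, \ref{lem:lambdalemma_b}.

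The main obstacle I anticipate is bookkeeping the interplay between the continuous-time projections $P^\pm_{l/r}$ at the (large but fixed) times $\pm\Omega^\pm$ and the discrete projections $Q^\pm$ at $0$: one must check that the `center' directions of the exponential trichotomy along $q^\pm$ (which are absent for an equilibrium but present here, since $\gamma$ is a genuine periodic orbit) are exactly absorbed into $\Sigma_\gamma = p_\gamma + Y_\gamma$ and the flow direction, so that restricted to $Y_\gamma$ the relevant maps are genuine isomorphisms and the splitting above closes up — this is precisely what lemma~\ref{lem:iso_Ps} and the identities \eref{eq:projections}, \eref{eq:imQ_subset} are for, so the work is in applying them carefully rather than in any new estimate. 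A secondary point is that the estimates $\hat C_g$ in lemma~\ref{lem:link_linear} are \emph{not} uniform in $\omega^\pm$ when $\gamma^\pm$ is periodic; but here $\Omega^\pm$ are fixed once and for all (they are not sent to infinity — it is $\nu$, the number of discrete steps, that grows), so this non-uniformity is harmless and the contraction constant can still be made small by shrinking $c$.
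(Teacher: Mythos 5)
Your proposal is correct and follows essentially the same route as the paper: decompose the coupling condition ($C$) in $\Sigma_\gamma$ via $Q^\pm(0)$ and $\id-Q^\pm(0)$ using lemma~\ref{lem:iso_Ps} and \eref{eq:projections}--\eref{eq:imQ_subset}, identify $a_l^+=b_\perp^+$ and $b^+=\bigl(P^+_{s,l}(\Omega^+)|_{\im Q^+(0)}\bigr)^{-1}a^+_{\perp,l}$ (and symmetrically on the right), and close the resulting fixed point equation by the Banach fixed point theorem using \eref{eq:est_aperp}, \eref{eq:est_aperpder}, corollary~\ref{cor:lambdalemma_b} and \eref{eq:est_bperpder}, with $\Omega^\pm$ and $N$ chosen large but fixed. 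The only cosmetic difference is that you substitute two of the four component equations into the other two to get a fixed point problem in $(b^+,b^-)$ alone, whereas the paper keeps all four variables $(a_l^+,a_r^-,b^+,b^-)$ in the map $\mathcal{G}_{u,\lambda}$; also note that $(\id-P^+_l(\Omega^+))v_l^+(\Omega^+)$ equals $a_l^+$ directly from the boundary condition at $\Omega^+$, with no transition matrix involved.
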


\begin{proof}
We show that $\left(a_l^+,a_r^-,b^+,b^-\right)$ are uniquely determined in
\begin{equation*}
    \fl\Delta_{u,\lambda} := \im \left(\id - P_l^+(\Omega^+) \right) \times \im \left( \id - 
    P_r^-(-\Omega^-) \right)
    \times \im Q^+(0) \times \im Q^-(0).
  \end{equation*}
Note that all projections appearing in the definition of $\Delta_{u,\lambda}$ depend on
$(u_l,\lambda)$ or $(u_r,\lambda)$, respectively.  Throughout the proof we
suppose that $\left(a_l^+,a_r^-,b^+,b^-\right)$ belongs to $\Delta_{u,\lambda}$.

We construct $a_l^+$, $a_r^-$ and $b$ by solving an appropriate
fixed point equation.

As a consequence of hypothesis~\ref{hyp:scaling} we know that
$v_l^+(a_l,u_l,\lambda)(\Omega^+)\in Y_\gamma$. In what follows we suppress the
dependence on $a_l$, $u_l$, $\lambda$ and $b$ from our notation.  Therefore, in
accordance with lemma~\ref{lem:iso_Ps}, we find:
\[
\fl
 \begin{array}{l}
 v_l^+(\Omega^+)=Q^+(0)v_l^+(\Omega^+)+(\id-Q^+(0))v_l^+(\Omega^+)
\\[1ex]
\rule{3em}{0pt}=\left( P^+_{s,l}(\Omega^+)\left|_{{\rm im}\,Q^+(0)}\right. \right)^{-1}P^+_{s,l}(\Omega^+)v_l^+(\Omega^+)+
(\id-P^+_l(\Omega^+)v_l^+(\Omega^+)
\\[1ex]
\rule{3em}{0pt}=\left( P^+_{s,l}(\Omega^+)\left|_{{\rm im}\,Q^+(0)}\right. \right)^{-1}\, a^+_{\perp,l}\,+a^+_l.
\end{array}
\]
On the other hand, in accordance with lemma~\ref{lem:lambdalemma}:
\[
w^+(0)=Q^+(0)w^+(0)+(\id-Q^+(0))w^+(0)=b^++b^+_\perp.
\]
Hence, $v_l^+(a_l,u_l,\lambda)(\Omega^+) = w^+(b,u_l,\lambda)(0)$ if and only if 
\begin{equation}\label{eq:fix_part1}
 b^+=\left( P^+_{s,l}(\Omega^+)\left|_{{\rm im}\,Q^+(0)}\right. \right)^{-1}\, a^+_{\perp,l}=:\alpha^+_{\perp,l}
\quad
{\rm and}
\quad
a^+_l=b^+_\perp.
\end{equation}
In a similar way we find that $v_r^-(a_r,u_r,\lambda)(-\Omega^-) = w^-(b,u_r,\lambda)(0)$ if and only if
\begin{equation}\label{eq:fix_part2}
 b^-=\left( P^-_{u,r}(-\Omega^-)\left|_{{\rm im}\,Q^-(0)}\right. \right)^{-1}\, a^-_{\perp,r}=:\alpha^-_{\perp,r}
\quad
{\rm and}
\quad
a^-_r=b^-_\perp.
\end{equation}
Altogether, for fixed $u_l$, $u_r$ and $\lambda$ equations \eref{eq:fix_part1}
and \eref{eq:fix_part2} are equivalent to the fixed point equation
\begin{equation}\label{eq:fp_g}
\begin{array}{ll}
\left(a_l^+,a_r^-,b^+,b^-\right)&=
\left( b_\perp^+(b), \; b_\perp^-(b), \alpha_{\perp,l}^+(a_l), \; \alpha_{\perp,r}^-(a_r)\right)
\\[1ex]
&=:{\mathcal G}_{u,\lambda}\left((a_l^+,a_r^-,b^+,b^-),(a_l^-,a_r^+)\right),
\end{array}
\end{equation}
where we consider ${\mathcal G}$ as a mapping 
\begin{equation*}
 {\mathcal G_{u,\lambda}}:  \Delta_{u,\lambda} \times (\R^n\times\R^n)\to \Delta_{u,\lambda}.
\end{equation*}

To solve the fixed point equation \eref{eq:fp_g} we apply the Banach fixed point
theorem.  First we show that there is a
$\mathcal{G}_{u,\lambda}(\cdot,(a_l^-,a_r^+))$-invariant closed ball
$B(0,\varepsilon)\subset\Delta_{u,\lambda}$. Then we prove that
$\mathcal{G}_{u,\lambda}(\cdot,(a_l^-,a_r^+))$ is a contraction on
$B(0,\varepsilon)$.

Let $\tilde{c}_l(K)$ and $\tilde{c}_r(K)$ be the constants according to
\eref{eq:def_tilde_c}.  The subscripts $l$ and $r$ refer to $v_l$ and $v_r$,
respectively. With the constant $\tilde{\mathfrak{c}}$ related to $w$, cf.
lemma~\ref{lem:lambdalemma}, we
define 
\[ 
\epsilon=\epsilon(K)
:=\min\{\tilde{c}_l(K),\tilde{c}_r(K),\tilde{\mathfrak{c}}\}.  
\] 
Now we fix some
sufficiently large $K$. Then the estimates given in  \eref{eq:est_aperp} and
corollary~\ref{cor:lambdalemma_b} provide that $\Omega^+$, $\Omega^-$ and $N$ can be
chosen so large that for all $\nu> N$ and for all $\abs{a_l^-},
\abs{a_r^+}<\epsilon(K)$ the mapping ${\mathcal
G}_{u,\lambda}(\cdot,(a_l^-,a_r^+))$ leaves the closed ball
$B_{\Delta_{u,\lambda}}(0,\epsilon(K))$ invariant.  This remains true also for
all larger $\Omega^+$, $\Omega^-$ and $N$.

Due to \eref{eq:est_aperpder} and \eref{eq:est_bperpder} the mapping ${\mathcal
G}_{u,\lambda}(\cdot,(a_l^-,a_r^+))$ is also a contraction on
$B_{\Delta_{u,\lambda}}(0,\epsilon(K))$ (with increased $\Omega^\pm$
and $N$, if necessary).
\end{proof}

\begin{cor}\label{cor:couplingper}
 Let the assumptions of lemma~\ref{lem:couplingper} hold. Then
 $(a_l^+,a_r^-,b^+,b^-)$ depend smoothly on $(a_l^-,a_r^+,u,\lambda)$.
\end{cor}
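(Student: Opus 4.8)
The plan is to derive the smooth dependence from the implicit function theorem (equivalently, the smooth version of the uniform contraction principle) applied to the fixed point equation \eref{eq:fp_g}. Recall from the proof of lemma~\ref{lem:couplingper} that $(a_l^+,a_r^-,b^+,b^-)$ was obtained as the unique fixed point of $\mathcal{G}_{u,\lambda}(\cdot,(a_l^-,a_r^+))$ in the ball $B_{\Delta_{u,\lambda}}(0,\epsilon(K))$, and that $\Omega^\pm$ and $N$ were chosen large enough so that, by \eref{eq:est_aperpder} and \eref{eq:est_bperpder}, the first derivative of $\mathcal{G}_{u,\lambda}(\cdot,(a_l^-,a_r^+))$ is bounded by some $\theta<1$ uniformly in $(a_l^-,a_r^+,u,\lambda)$ in a neighbourhood of the origin.

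One technical point has to be addressed first: the fibre
\[
\Delta_{u,\lambda}=\im\left(\id-P_l^+(\Omega^+)\right)\times\im\left(\id-P_r^-(-\Omega^-)\right)\times\im Q^+(0)\times\im Q^-(0)
\]
itself depends on $(u,\lambda)$. Since $q_{l/r}^\pm(u,\lambda)$ depend smoothly on $(u,\lambda)$ (they solve \eref{eq:system} with smoothly varying initial data, cf. the proof of theorem~\ref{thm:splitting}), so do the transition matrices $\Phi_{l/r}^\pm$ and $\Psi^\pm$, hence also the projections $P_{l/r}^\pm$, $P_{s/u,l/r}^\pm$ and $Q^\pm$; thus $\Delta_{u,\lambda}$ is a smooth finite-dimensional subbundle over a neighbourhood of $(u,\lambda)=0$. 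I would choose a smooth family of projections onto $\Delta_{u,\lambda}$ and note that its restriction to the fixed fibre $\Delta_{0,0}$ is, for small $(u,\lambda)$, an isomorphism $\iota_{u,\lambda}\colon\Delta_{0,0}\to\Delta_{u,\lambda}$ depending smoothly on $(u,\lambda)$. Conjugating by $\iota_{u,\lambda}$ turns $\mathcal{G}_{u,\lambda}$ into a map $\widetilde{\mathcal{G}}\colon\Delta_{0,0}\times(\R^n\times\R^n)\times\mathcal{U}\times\R^m\to\Delta_{0,0}$ on a fixed domain that is still a $\theta$-contraction in its first argument (after shrinking the neighbourhoods if necessary).

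Next I would verify that $\widetilde{\mathcal{G}}$ is jointly smooth in all of its arguments, which for $\mathcal{G}$ amounts to the smoothness of its building blocks: $b_\perp^\pm(b,u,\lambda)$ is smooth in $(b,u,\lambda)$ (as noted after \eref{eq:def_b_perp}); the maps $\alpha_{\perp,l}^+$ and $\alpha_{\perp,r}^-$ are smooth because $a_{\perp,l}^+=P_{s,l}^+(u_l,\lambda)(\Omega^+)v_l^+(a_l,u_l,\lambda)(\Omega^+)$ and $a_{\perp,r}^-$ depend smoothly on $(a_{l/r},u,\lambda)$ by lemma~\ref{lem:link_v} and the smoothness of the projections, while the inverse isomorphisms $(P_{s,l}^+(\Omega^+)|_{\im Q^+(0)})^{-1}$ and $(P_{u,r}^-(-\Omega^-)|_{\im Q^-(0)})^{-1}$, which exist by lemma~\ref{lem:iso_Ps}, depend smoothly on $(u,\lambda)$ since matrix inversion is smooth on invertible matrices. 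Composition with the smooth trivialization $\iota_{u,\lambda}$ preserves smoothness.

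Finally, applying the implicit function theorem to $\Psi(\zeta,(a_l^-,a_r^+,u,\lambda)):=\zeta-\widetilde{\mathcal{G}}_{u,\lambda}(\zeta,(a_l^-,a_r^+))$ at the (transported) fixed point, where $D_\zeta\Psi=\id-D_1\widetilde{\mathcal{G}}$ is invertible because $\|D_1\widetilde{\mathcal{G}}\|\le\theta<1$, shows that the fixed point depends smoothly on $(a_l^-,a_r^+,u,\lambda)$; transporting back by the smooth map $\iota_{u,\lambda}$ gives the claim for $(a_l^+,a_r^-,b^+,b^-)$. The only real obstacle is the bookkeeping around the moving fibre $\Delta_{u,\lambda}$; once it is trivialized, this is a routine application of the smooth uniform contraction principle, with the requisite contraction estimate already supplied by the proof of lemma~\ref{lem:couplingper}.
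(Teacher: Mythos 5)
Your proposal is correct and follows essentially the same route as the paper: both identify the moving fibre $\Delta_{u,\lambda}$ as the only obstacle and resolve it by passing to a fixed space via a smooth $(u,\lambda)$-dependent isomorphism --- the paper realizes this by re-decomposing $v_l^+(\Omega^+)$, $w^\pm(0)$ and $v_r^-(-\Omega^-)$ with the frozen projections $Q^\pm(0,0)(0)$, you by conjugating $\mathcal{G}_{u,\lambda}$ with a trivialization $\iota_{u,\lambda}$. After that, both arguments conclude with the implicit function theorem (smooth uniform contraction principle) at the fixed point, using the contraction bound already established in the proof of lemma~\ref{lem:couplingper}.
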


\begin{proof}
 For fixed $(u,\lambda)$ the smooth dependence on $(a_l^-,a_r^+)$ follows by
 applying the implicit function theorem at a solution of \eref{eq:fp_g}.

To prove the smooth dependence on $(u,\lambda)$ we redo the proof
of lemma~\ref{lem:couplingper} to some extent. This time, however, we decompose
$v_l^+(\Omega^+)$ and $w^+(0)$ by means of $Q^+(0,0)(0)$ instead of
$Q^+(u_l,\lambda)(0)$. Similarly, we decompose $v_r^-(-\Omega^-)$ and $w^-(0)$ by
means of $Q^-(0,0)(0)$. In this way we get a fixed point equation in 
\[
\Delta:=\im(\id-Q^+(0))\times\im(\id-Q^-(0))\times\im Q^+(0)\times\im Q^-(0), 
\]
where all projections are considered at $(u_l,\lambda)=(0,0)$ or
$(u_r,\lambda)=(0,0)$, respectively. Note that $\Delta$ does not depend on
$(u,\lambda)$, and there is a $(u,\lambda)$-dependent isomorphism acting between
$\Delta_{u,\lambda}$ and $\Delta$.  This leads to a fixed point equation,
similar to \eref{eq:fp_g}, defined by a mapping \[ {\mathcal
G}:\Delta\times(\R^n\times\R^n)\times{\mathcal U}\times\R^m\to\Delta.  \]
Exploiting this fixed point equation yields the corollary.
\end{proof}

\begin{proof}[Proof of corollary~\ref{cor:lhc}]
  The statement of corollary~\ref{cor:lhc} follows immediately from
  lemma~\ref{lem:couplingper} with $a_l^- = a_r^+ = 0$, see also the proof of
  corollary~\ref{cor:linbvp} and remark~\ref{rem:infinity}.
\end{proof}

\subsection{Jump estimates}
\label{sec:jump_3}

Let the conditions of theorem~\ref{thm:lhc} hold, and let $(x_l,x_m,x_r)$ denote
the long Lin orbit segment. 
 According to \eref{eq:def_Xi} we define:
\begin{equation}\label{eq:def_Xi_lr}
  \fl\begin{array}{lcl}
  \Xi_l(\nu,a_l^-,a_r^+,u,\lambda) &:=& x_{l}(\nu,a_l^-,a_r^+,u,\lambda)(0) -
  x_{m}(\nu,a_l^-,a_r^+,u,\lambda)(0),\\
 \Xi_r(\nu,a_l^-,a_r^+,u,\lambda) &:=& x_{m}(\nu,a_l^-,a_r^+,u,\lambda)(\tau) -
  x_{r}(\nu,a_l^-,a_r^+,u,\lambda)(0).
\end{array}
\end{equation}
We now consider exemplarily the jump $\Xi_l$ within $\Sigma_l$ more closely.
For that purpose we write $\Xi_l$ in the form, cf. \eref{eq:jumpsplit} and \eref{eq:def_xi},
\begin{equation*}
 \Xi_l(\nu,a_l^-,a_r^+,u,\lambda)=\xi_l^\infty(u_l,\lambda) + \xi_l(\nu,a_l^-,a_r^+,u,\lambda),
\end{equation*}
where
\begin{eqnarray*}
  \xi_l^\infty(u_l,\lambda) &:=&  q_l^-(u_l,\lambda)(0) -
    q_l^+(u_l,\lambda)(0),\\
    \xi_l(\nu,a_l^-,a_r^+,u,\lambda) &:=& v_l^-(a_l,u,\lambda)(0) -
  v_l^+(a_l,u,\lambda)(0),
\end{eqnarray*}
with $a_l = (a_l^-,a_l^+(a_l^-,a_r^+,u,\lambda))$; cf.
lemma~\ref{lem:couplingper}.

Recall that we denote the leading stable Floquet multiplier of $\gamma$ by
$\mu^s$.

\begin{lem}
  \label{lem:jump}
Let the constants $a_l^-,a_r^+,u$ and $\lambda$ be in agreement with
theorem~\ref{thm:lhc}, and let $\nu$ be sufficiently large. Further, we assume
hypotheses~\ref{hyp:leading_ev}, \ref{hyp:dim_Z} and \ref{hyp:orth_decomp} hold
for the short Lin orbit segment defined by $q_l^{-/+}$ and $v_l^{-/+}$, and
we assume that the non-orbit-flip condition for $q_l^{+}$ holds, meaning that $q_l^{+}$ is not in the strong stable
manifold of $\gamma$.  Then 
  \begin{equation*}
    \eqalign{
\xi_l(\nu,a_l^-,a_r^+,u,\lambda)={\mathcal O}((\mu^u)^{-\nu}) 
+\mathcal{O}(|a_l^-|).
   }
  \end{equation*}
The $\mathcal{O}(\cdot)$-terms are valid for $\nu\to\infty$ or $|a_l^-|\to 0$, respectively.
\end{lem}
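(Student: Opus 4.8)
The plan is to recognize $\xi_l$ as the jump function of the \emph{left} short Lin orbit segment and then to feed into the short-segment estimate of lemma~\ref{lem:jump_2} the exponential bound on the boundary datum $a_l^+$ that the coupling construction forces. First I would unwind the definitions: by the construction carried out in \sref{sec:couplinginsigma}, the pair $(x_l^-,x_l^+)$ --- where $x_l^-=x_l$ and $x_l^+$ is the initial piece of $x_m$ running between $\Sigma_l$ and $\Sigma_\gamma$ --- is exactly a short Lin orbit segment in the sense of theorem~\ref{thm:linbvp} for the short chain segment $\gamma_l\cup q_l\cup\gamma$, with time vector $\bo_l=(\omega^-,\Omega^+)$ and boundary data $a_l=(a_l^-,a_l^+)$, where $a_l^+=a_l^+(a_l^-,a_r^+,u,\lambda)$ is the quantity produced by lemma~\ref{lem:couplingper}. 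Hence, by \eref{eq:def_Xi_lr} and \eref{eq:def_xi}, $\xi_l$ coincides with the jump function $\xi$ of this short Lin orbit segment evaluated at $(\bo_l,a_l,u,\lambda)$. Since we have assumed hypotheses~\ref{hyp:leading_ev}, \ref{hyp:dim_Z} and \ref{hyp:orth_decomp} for this segment, lemma~\ref{lem:jump_2} applies verbatim and gives
\[
\xi_l(\nu,a_l^-,a_r^+,u,\lambda)=\mathcal{O}(\norm{a_l}),
\]
with $\mathcal{O}$-constant uniform in $u$, $\lambda$ and $\bo_l$; it is this uniformity in $\bo$ (part of the statement of lemma~\ref{lem:jump_2}) that makes the estimate insensitive to the fact that $\Omega^+$ has been chosen large.

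Next I would bound $a_l^+$. From \eref{eq:fix_part1} in the proof of lemma~\ref{lem:couplingper} one has $a_l^+=b_\perp^+(b,u_l,\lambda)$, with $b$ the fixed point of \eref{eq:fp_g}. The non-orbit-flip assumption on $q_l^+$ places us in the situation where the Shilnikov-type estimates of~\cite{Deng1989} (see also corollary~\ref{cor:lambdalemma_b} and the remark following lemma~\ref{lem:lambdalemma_b}) apply to $b_\perp^+$, yielding a constant $C$ --- independent of $b$, $u$, $\lambda$ and $\nu$ --- with $\abs{b_\perp^+(b,u,\lambda)}<C(\mu^u)^{-\nu}$ for all admissible $(b,u,\lambda)$. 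As the fixed-point value of $b$ lies in the relevant ball for every admissible choice of $a_l^-$ and $a_r^+$, it follows that $\abs{a_l^+}<C(\mu^u)^{-\nu}$ uniformly in $a_l^-,a_r^+,u,\lambda$, i.e. $a_l^+=\mathcal{O}((\mu^u)^{-\nu})$ as $\nu\to\infty$.

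Combining the two steps, and using $\norm{a_l}=\max\{\abs{a_l^-},\abs{a_l^+}\}\le\abs{a_l^-}+\abs{a_l^+}$, gives
\[
\xi_l(\nu,a_l^-,a_r^+,u,\lambda)=\mathcal{O}(\abs{a_l^-})+\mathcal{O}((\mu^u)^{-\nu}),
\]
which is the assertion. I expect the work to be essentially bookkeeping rather than new analysis: the two things that must be checked with care are that the correct time vector $\bo_l$ and the correct boundary data $a_l$ are fed into lemma~\ref{lem:jump_2}, and that the exponential bound on $a_l^+=b_\perp^+$ is valid \emph{uniformly} over the entire admissible parameter range (so that the $\mathcal{O}((\mu^u)^{-\nu})$ term is genuinely independent of $a_l^-$, $a_r^+$, $u$, $\lambda$) --- both of which are already contained in the uniformity clauses of lemma~\ref{lem:jump_2} and of corollary~\ref{cor:lambdalemma_b} (respectively the estimates in~\cite{Deng1989}). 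For the analogous statement about $\xi_r$ one simply repeats the argument on the right short chain segment, replacing the bound on $a_l^+$ by the bound $a_r^-=b_\perp^-=\mathcal{O}((\mu^s)^{-\nu})$ from the same sources.
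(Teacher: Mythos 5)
Your proposal is correct and follows essentially the same route as the paper: apply lemma~\ref{lem:jump_2} to the left short Lin orbit segment to get $\xi_l=\mathcal{O}(\abs{a_l^-})+\mathcal{O}(\abs{a_l^+})$, identify $a_l^+=b_\perp^+$ via the coupling condition \eref{eq:fix_part1}/\eref{eq:fp_g}, and bound $b_\perp^+$ by $\mathcal{O}((\mu^u)^{-\nu})$ using lemma~\ref{lem:lambdalemma_b} (the paper cites the leading-order expansion there, whereas you invoke the uniform bound of corollary~\ref{cor:lambdalemma_b}, which suffices for the stated $\mathcal{O}$-estimate). Your additional attention to the uniformity of the constants in $\bo$ and in $(b,u,\lambda)$ is a correct and welcome piece of bookkeeping that the paper leaves implicit.
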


\begin{proof}
  Lemma~\ref{lem:jump_2} yields that $\xi_l(\nu,a_l^-,a_r^+,u,\lambda) =
  \mathcal{O}(|a_l^-|) + \mathcal{O}(|a_l^+|)$.  The coupling condition ($C$), see also \eref{eq:fp_g},
  yields that $a_l^+ = b_\perp^+$, and from
  lemma~\ref{lem:lambdalemma_b} we get $|b_\perp^+| = c^u(b,u,\lambda)
  (\mu^u)^{-\nu} + o(|\mu^u|^{-\nu})$.  
\end{proof}

In what follows we assume that $x_l$ and $x_r$ approach $\gamma_l$ and
$\gamma_r$, respectively.  For the jumps of the heteroclinic Lin orbit $x_l\cup
x_m\cup x_r$ connecting $\gamma_l$ and $\gamma_r$ (via $\gamma$)  we get:
\begin{cor}
  \label{cor:jump}
Let the heteroclinic Lin orbit $x_l\cup x_m\cup x_r$ be in agreement with
corollary~\ref{cor:lhc}, and let $\nu$ be sufficiently large. Further we assume
hypotheses~\ref{hyp:leading_ev}, \ref{hyp:dim_Z} and \ref{hyp:orth_decomp} hold
for the short Lin orbit segment defined by $q_l^{-/+}$ and $v_l^{-/+}$, and
we assume that the non-orbit-flip condition for $q_l^{+}$ holds, meaning that $q_l^{+}$ is not in the strong stable
manifold of $\gamma$.  Then there is a smooth function $c_l^u:
\mathcal{U}\times\R^m\to\R$ such that
  \begin{equation*}
    \eqalign{
   \langle z, \xi_l(\nu,u,\lambda)\rangle &= c_l^u(u,\lambda) (\mu^u)^{-\nu} 
    + o(|\mu^u|^{-\nu}),
    }
  \end{equation*}
where the $o(\cdot)$-term is valid for $\nu\to\infty$.
\end{cor}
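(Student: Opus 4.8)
The plan is to refine the proof of lemma~\ref{lem:jump} by replacing lemma~\ref{lem:jump_2} with its sharper counterpart corollary~\ref{cor:jump_2}. Since the heteroclinic Lin orbit is the one furnished by corollary~\ref{cor:lhc}, we have $a_l^-=a_r^+=0$ and $x_l$ lies in $W^u(\gamma_l)$; hence the left short Lin orbit segment $(x_l^-,x_l^+)$ (with $x_l^-=x_l$) is precisely of the type described by corollary~\ref{cor:linbvp}, with $\omega^-=\infty$ on the $\gamma_l$-side and with $\omega^+$ equal to the fixed transition time $\Omega^+$ from $\Sigma_l$ to $\Sigma_\gamma$, the role of $a^+$ being played by the coupling datum $a_l^+=a_l^+(u,\lambda)$ (with $\nu$ suppressed). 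Applying corollary~\ref{cor:jump_2} to this segment I would obtain
\[
 \langle z,\xi_l(\nu,u,\lambda)\rangle = -\langle \Phi_l^+(0,\Omega^+)^T z,\,(\id-P_l^+(u_l,\lambda)(\Omega^+))\,a_l^+\rangle + o(\abs{a_l^+}).
\]

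Next I would bring in the coupling. By the fixed point equation \eref{eq:fp_g} with $a_l^-=a_r^+=0$ one has $a_l^+=b_\perp^+(b,u,\lambda)$ for $b=b(u,\lambda)$, and $b_\perp^+\in\im(\id-Q^+(u_l,\lambda)(0))\subset Y_\gamma$. By lemma~\ref{lem:iso_Ps} the projection $\id-P_l^+(u_l,\lambda)(\Omega^+)$ restricts on $Y_\gamma$ to $\id-Q^+(u_l,\lambda)(0)$, which is the identity on $\im(\id-Q^+(u_l,\lambda)(0))$; as $b_\perp^+$ lies in that image, $(\id-P_l^+(u_l,\lambda)(\Omega^+))a_l^+=b_\perp^+$. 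Moreover corollary~\ref{cor:lambdalemma_b} gives $\abs{a_l^+}=\abs{b_\perp^+}=\mathcal{O}((\mu^u)^{-\nu})$, so the remainder above is $o(\abs{\mu^u}^{-\nu})$ and
\[
 \langle z,\xi_l(\nu,u,\lambda)\rangle = -\langle \Phi_l^+(0,\Omega^+)^T z,\,b_\perp^+\rangle + o(\abs{\mu^u}^{-\nu}).
\]

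The decisive step is to promote lemma~\ref{lem:lambdalemma_b} to a \emph{vector-valued} expansion of $b_\perp^+$. Its proof already produces $b_\perp^+=\Psi^+(0,\nu^+)\beta^+ + o(\abs{\mu^u}^{-\nu})$ with $\beta^+=\eta^u(u,\lambda)(\mu^u)^{-\nu^-}+o(\abs{\mu^u}^{-\nu^-})$, where $\eta^u$ is a nonzero eigenvector of $D_1\Pi(p_\lambda,\lambda)$ for $\mu^u$ depending smoothly on $(u,\lambda)$. Since $q_d^+(u_l,\lambda)(n)\to p_\gamma$ and the variational equation along $q_d^+$ has an exponential dichotomy, $\Psi^+(0,\nu^+)$ acts on the unstable directions as multiplication by $(\mu^u)^{-\nu^+}$ up to lower-order corrections, turning this into $b_\perp^+=\zeta^u(u,\lambda)(\mu^u)^{-\nu}+o(\abs{\mu^u}^{-\nu})$ for a smooth vector $\zeta^u$; the dependence on $b$ drops into the remainder because $b=b(u,\lambda,\nu)\to 0$ as $\nu\to\infty$ (by \eref{eq:est_aperp} and corollary~\ref{cor:lambdalemma_b}), and the leading-eigenvalue assumptions on $\gamma$ together with the non-orbit-flip condition for $q_l^+$ are exactly what make hypotheses~\ref{hyp:ev_p_gamma}--\ref{hyp:approach_b_perp}, hence lemma~\ref{lem:lambdalemma_b}, applicable. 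Substituting, I would then take
\[
 c_l^u(u,\lambda):=-\langle \Phi_l^+(0,\Omega^+)^T z,\,\zeta^u(u,\lambda)\rangle,
\]
which is real-valued and smooth in $(u,\lambda)$ since $\Phi_l^+$ and $\zeta^u$ are, and obtain the asserted expansion $\langle z,\xi_l(\nu,u,\lambda)\rangle=c_l^u(u,\lambda)(\mu^u)^{-\nu}+o(\abs{\mu^u}^{-\nu})$.

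The main obstacle is precisely this last step: lemma~\ref{lem:lambdalemma_b} is stated only for the norm $\abs{b_\perp^+}$, whereas here the vector $b_\perp^+$ itself is needed in order to pair it with $\Phi_l^+(0,\Omega^+)^T z$. One has to rerun that proof keeping track of vectors rather than scalar products, and verify that the asymptotics of $\Psi^+(0,\nu^+)$ on the unstable subspace combine with the expansion of $\beta^+$ from \eref{eq:rep_beta} to produce a genuine $\zeta^u(\mu^u)^{-\nu}$ contribution whose leading coefficient is smooth in $(u,\lambda)$ and, in the limit $\nu\to\infty$, independent of $b$. Once that is in place the rest is bookkeeping with the results of \sref{sec:onelink} and \sref{sec:twolinks}.
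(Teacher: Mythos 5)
Your proposal is correct and follows essentially the same route as the paper: apply corollary~\ref{cor:jump_2} with $a_l^-=0$ to express $\langle z,\xi_l\rangle$ through $a_l^+$, identify $a_l^+=b_\perp^+$ via the coupling fixed point equation \eref{eq:fp_g}, and then invoke the asymptotics of $b_\perp^+$ from lemma~\ref{lem:lambdalemma_b}. The one place you diverge is the final step, which you flag as the ``main obstacle'': you want a genuinely vector-valued expansion $b_\perp^+=\zeta^u(u,\lambda)(\mu^u)^{-\nu}+o(|\mu^u|^{-\nu})$. The paper circumvents exactly this by factoring $b_\perp^+=\abs{b_\perp^+}\,\hat b_\perp^+$ with $\hat b_\perp^+=b_\perp^+/\abs{b_\perp^+}$, so that
\[
\langle \Phi^+_l(0,\Omega^+)^T(\id-P^+_l(u,\lambda)(0))^Tz,\,b_\perp^+\rangle
=\abs{b_\perp^+}\,\langle \Phi^+_l(0,\Omega^+)^T(\id-P^+_l(u,\lambda)(0))^Tz,\,\hat b_\perp^+\rangle,
\]
and only the scalar expansion \eref{eq:est_bperp} is needed; the coefficient is then defined as $c_l^u(u,\lambda)=-c^u(b,u,\lambda)\langle \Phi^+_l(0,\Omega^+)^T(\id-P^+_l(u,\lambda)(0))^Tz,\hat b_\perp^+\rangle$ as in \eref{eq:def_c_l^u}. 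Note, however, that this only postpones rather than eliminates your concern: $b$ and $\hat b_\perp^+$ still depend on $\nu$, so for $c_l^u$ to be a function of $(u,\lambda)$ alone one ultimately needs the direction of $b_\perp^+$ (and the value of $b$) to stabilize as $\nu\to\infty$ --- which is precisely the content of the vector-valued expansion you propose, and which follows from the same computations behind \eref{eq:est_b} and \eref{eq:est_beta}. So your route is a more explicit version of the paper's; neither is a different method, and the remaining work you identify is bookkeeping already implicit in the proof of lemma~\ref{lem:lambdalemma_b}.
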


\begin{proof}
 In accordance with corollary~\ref{cor:jump_2} and \eref{eq:jump_2} we have
\[
 \langle z, \xi_l(\nu,u,\lambda)\rangle=-\langle \Phi^+_l(0,\omega^+)^T(\id -P^+_l(u,\lambda)(0))^Tz,a_l^+\rangle +
o(\abs{a_l^+}).
\]
Using \eref{eq:fp_g} and \eref{eq:est_bperp} (in this order) yields
\[
\fl
\begin{array}{lcl}
 \langle z, \xi_l(\nu,u,\lambda)\rangle&=&-\langle \Phi^+_l(0,\omega^+)^T(\id -P^+_l(u,\lambda)(0))^Tz,b_\perp^+\rangle +
o(\abs{b_\perp^+})
\\[1ex]
&=& -c^u(b,u,\lambda)(\mu^u)^{-\nu}\langle \Phi^+_l(0,\omega^+)^T(\id
-P^+_l(u,\lambda)(0))^Tz,\hat b_\perp^+\rangle\\[1ex] 
&&+o((\mu^u)^{-\nu})
\end{array}
\] The notations $c^u$ and $\hat b_\perp^+$ are in accordance with the proof of
lemma~\ref{lem:lambdalemma_b}.  Note that both $b$ and $\hat b_\perp^+$ depend
on $(u,\lambda)$.  With that we finally get
\begin{equation}\label{eq:def_c_l^u}
 c_l^u(u,\lambda)=-c^u(b,u,\lambda)\langle \Phi^+_l(0,\omega^+)^T(\id -P^+_l(u,\lambda)(0))^Tz,\hat b_\perp^+\rangle.
\end{equation}
The smoothness of $c_l^u$ follows from the representation \eref{eq:def_c_l^u} ---
recall that all ingredients there depend smoothly on $(u,\lambda)$.
\end{proof}

\begin{cor}\label{cor:jump_a}
Let the assumptions of corollary~\ref{cor:jump} hold. Additionally let $n=3$.
Then the function $c_l^u$ from corollary~\ref{cor:jump} satisfies $c_l^u(0,0)\not=0$.
\end{cor}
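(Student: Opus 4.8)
The plan is to trace the formula \eref{eq:def_c_l^u} for $c_l^u(0,0)$ and check that neither factor can vanish when $n=3$. From \eref{eq:def_c_l^u} we have
\[
c_l^u(0,0)=-c^u(b(0,0),0,0)\,\langle\Phi^+_l(0,\omega^+)^T(\id-P^+_l(0,0)(0))^Tz,\hat b_\perp^+(0,0)\rangle,
\]
so I need two things: first, $c^u(b,0,0)\neq 0$, which is already supplied by lemma~\ref{lem:lambdalemma_b} (the function $c^{u/s}$ is smooth with $c^{u/s}(0,0,0)\neq0$, and by \eref{eq:def_c^u} it equals $\langle\eta^+,\eta^u\rangle$, a scalar product of two eigenvectors that linear algebra forces to be non-orthogonal); second, that the scalar product
\[
\langle\Phi^+_l(0,\omega^+)^T(\id-P^+_l(0,0)(0))^Tz,\hat b_\perp^+\rangle
=\langle(\id-P^+_l(0,0)(\omega^+))\Phi^+_l(\omega^+,0)z,\, \text{(adjoint context)}\rangle
\]
is nonzero. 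The second factor is the genuinely new point, and it is where the dimension hypothesis $n=3$ enters.

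First I would unwind the geometry. When $n=3$, a codimension count on $W^u(\gamma_l)$, $W^s(\gamma)$ and the cross-section $\Sigma_l$ shows that the relevant spaces $W^\pm_l$, $U_l$ and $Z_l$ are forced to be at most one-dimensional; in particular hypothesis~\ref{hyp:dim_Z} ($\dim Z_l=1$) is automatic, and the transverse direction $z$ spanning $Z_l$ is essentially the unique ``missing'' direction of the intersection of the two manifolds along $q_l$. The vector $\hat b_\perp^+$ is, by construction in the proof of lemma~\ref{lem:lambdalemma_b}, (a normalization of) $(\id-Q^+(0))w^+(0)$, which lies in the direction transverse to $\im Q^+(u_l,\lambda)(0)=T W^s_{\Pi}(p_\gamma)$ inside $\Sigma_\gamma$; again for $n=3$ this transverse direction is one-dimensional and is exactly the image of $z$ (or rather of $\ker P^+_l$) under the relevant transition map, by the identifications \eref{eq:projections}, \eref{eq:imQ_subset} and lemma~\ref{lem:iso_Ps}. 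So the scalar product in question is, up to nonzero scalars, of the form $\langle \zeta,\zeta'\rangle$ where $\zeta$ and $\zeta'$ are two one-dimensional ``left'' and ``right'' transverse directions that, by the non-orbit-flip hypothesis on $q_l^+$ together with hypothesis~\ref{hyp:leading_ev}, both propagate along $q_l^+$ without collapsing onto the strong stable fiber.

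The key step is therefore to show these two one-dimensional directions are not orthogonal. I would argue exactly as in the proof that $c^u(0,0,0)\neq0$ (the ``$\eta^+$ and $\eta^u$ cannot be orthogonal'' step): in a three-dimensional ambient space, once one knows that $\hat b_\perp^+$ is not in the strong stable subspace of the adjoint variational equation along $q_l^+$ (this is precisely the non-orbit-flip assumption, cf.\ hypothesis~\ref{hyp:approach_b_perp} transported to $q_l^+$), the propagated vector $\Phi^+_l(0,\omega^+)^T(\id-P^+_l(0,0)(0))^Tz$ has a nonzero component along the leading (one-dimensional, by hypothesis~\ref{hyp:leading_ev}) direction, and the complementary subspace is two-dimensional, leaving only the leading line available for $\hat b_\perp^+$ as well; two vectors that both have nonzero component on the same one-dimensional line, which moreover exhausts the orthogonal complement of a hyperplane, cannot be orthogonal. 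Hence the scalar product is nonzero, and $c_l^u(0,0)\neq0$.

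The main obstacle I anticipate is bookkeeping rather than conceptual: one must carefully match up the three a priori different ``transverse'' lines — the span of $z$ in $Z_l$, the direction of $b_\perp^+$ in $\Sigma_\gamma$, and the eigendirection $\eta^+$ of the adjoint Poincaré linearization — via the chain of isomorphisms in \eref{eq:projections}, \eref{eq:imQ_subset} and lemma~\ref{lem:iso_Ps}, and verify that each identification is through a nonsingular map at the base point. Once that dictionary is in place, the non-orthogonality is the same two-line-in-three-space argument already used for $c^u$, and the dimension hypothesis $n=3$ is exactly what guarantees all the intervening spaces are low-dimensional enough for that argument to close.
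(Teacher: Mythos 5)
Your plan is correct and follows essentially the same route as the paper: both arguments start from the representation \eref{eq:def_c_l^u} and show separately that $c^u(0,0,0)\neq 0$ (via the left/right-eigenvector non-orthogonality already established for lemma~\ref{lem:lambdalemma_b}, whose hypotheses~\ref{hyp:approach_q} and~\ref{hyp:approach_b_perp} become automatic when $\dim Y_\gamma=2$) and that the scalar product $\bigl\langle \Phi^+_l(0,\omega^+)^T(\id-P^+_l(0,0)(0))^Tz,\hat b_\perp^+\bigr\rangle$ is nonzero because for $n=3$ the relevant transverse spaces are one-dimensional. For the second factor your ``two lines in three-space'' detour can be shortcut: by \eref{eq:projections} and \eref{eq:def_b_perp}, $\hat b_\perp^+$ spans $\ker Q^+(0)=\Phi^+_l(\Omega^+,0)Z_l$ (since $\dim Z_l=1$ forces $W^-_l=\{0\}$ here), so $\Phi^+_l(0,\Omega^+)\hat b_\perp^+$ is a nonzero multiple of $z$ and, using $(\id-P^+_l(0,0)(0))^Tz=z$ from hypothesis~\ref{hyp:orth_decomp}, the scalar product is manifestly nonzero.
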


\begin{proof}
 Consider the explicit representation \eref{eq:def_c_l^u} of $c_l^u$.

First we make clear that $c^u(0,0,0)\not=0$: For that we recall the arguments
justifying \eref{eq:def_c^u} and note that, in the present context ($n=3$ and
hence $\dim Y_\gamma=2$), hypotheses~\ref{hyp:approach_q} and
\ref{hyp:approach_b_perp} are automatically fulfilled.

Next we consider the second term on the right-hand side of \eref{eq:def_c_l^u}.
Due to \eref{eq:def_Z} and \eref{eq:projectionplus} we have $(\id
-P^+_l(0,0)(0))^Tz=z$.  Again due to $n=3$, and further due to the definitions
of $b_\perp^+$ and $Q^+$, cf. \eref{eq:def_b_perp}, and \eref{eq:def_imQ} it is
clear that $ \langle \Phi^+_l(0,\omega^+)^Tz,\hat b_\perp^+\rangle\not=0.  $
\end{proof}

\begin{rem}
  \label{rem:jump}
  The jump $\Xi_r$ in $\Sigma_r$ can be treated in a similar way.
\end{rem}

\section{Application to EtoP cycles}
\label{sec:applications}
In this section we apply the theory of the existence of long Lin orbits
developed in the previous sections to EtoP cycles.
We discuss bifurcations of $1$-homoclinic orbits to the
equilibrium in the neighborhood of the EtoP cycle, and we compare these results with
the numerical results of a concrete vector field from~\cite{Krauskopf2006, Krauskopf2008},
which also serves as the main motivating example for our studies.
Here we refer to homoclinic orbits to $E$ (near the cycle under consideration) making
only one excursion to $P$ as $1$-homoclinic orbits.

Let the EtoP cycle consist of a hyperbolic equilibrium $E$, a hyperbolic periodic orbit $P$ and heteroclinic orbits $q_l$, connecting $E$ to $P$, and $q_r$
connecting $P$ to $E$. Then, in the language of the
previous sections, such a EtoP cycle can be considered as an orbit segment
$E\cup q_l\cup P\cup q_r\cup E$.  The 1-homoclinic orbits to $E$ then can be
found among the homoclinic Lin orbits near this orbit segment.  Therefore, the
bifurcation equations for detecting 1-homoclinic orbits to $E$ are generated by
making the jump functions $\Xi_l$ and $\Xi_r$ equal to zero, cf.
\eref{eq:def_Xi_lr}:
\begin{equation}\label{eq:def_bif_eqn}
 \Xi_l(\nu,u,\lambda)=0,\quad \Xi_r(\nu,u,\lambda)=0.
\end{equation}
Note that in the present context $a_l^-$ and $a_r^+$ are zero, the corresponding
$\xi_l$ and $\xi_r$ are given by corollary~\ref{cor:jump}.

In our analysis we distinguish two types of EtoP cycles. First, we consider
codimension-one cycles characterized by a robust heteroclinic connection $q_l$
and a connection $q_r$ that splits up with positive speed while moving the family
parameter $\lambda$. We prove an accumulation of 1-homoclinic orbits to $E$ near
the original EtoP cycle in the following sense: for each sufficiently large
$\nu\in\N$ there is a $\lambda_\nu$ for which a $1$-homoclinic orbit exists. This
homoclinic orbit performs $\nu$ rotations along $P$ before returning to $E$.
The $\lambda_\nu$ accumulate at $\lambda=0$, the critical parameter value for
which the cycle exits.

Second, we study codimension-two cycles, where in comparison with the previous
one, we `merely modify' the behavior near $q_l$: We demand that the unstable
manifold of $E$ and the stable manifold of $P$ have a quadratic tangency along
$q_l$. The parameters $\lambda_l$ and $\lambda_r$ unfold the orbits $q_l$ and
$q_r$ independently. Generically, 1-homoclinic orbits are still codimension-one
objects --- hence they are expected to appear along curves in the parameter
space. Indeed, for each $\nu$ we find those orbits on a curve $\kappa_\nu$ in
$(\lambda_l,\lambda_r)$-space. Each curve has a turning point which tends
to the critical parameter value $(\lambda_l,\lambda_r)=(0,0)$.  Here again $\nu$
counts the rotations of the 1-homoclinic orbits near $P$.

Our analysis is local in nature. However, the local phenomena described above
are part of a global scenario observed numerically in several examples. In
parameter space $1$-homoclinic orbits can be continued along a curve which snakes
between two curves (which are related to a `quadratic tangency at $q_l$') and
accumulates on a curve segment for which the EtoP cycle exists, an example will
be introduced in the next section.

Finally we mention that in the recent paper \cite{Champneys} similar
phenomena have been discussed.

\subsection{Unfolding of a saddle-node Hopf bifurcation with global reinjection
mechanism} \label{sec:example}

\begin{figure}[b] 
  \begin{center} 
  \includegraphics[scale=.8]{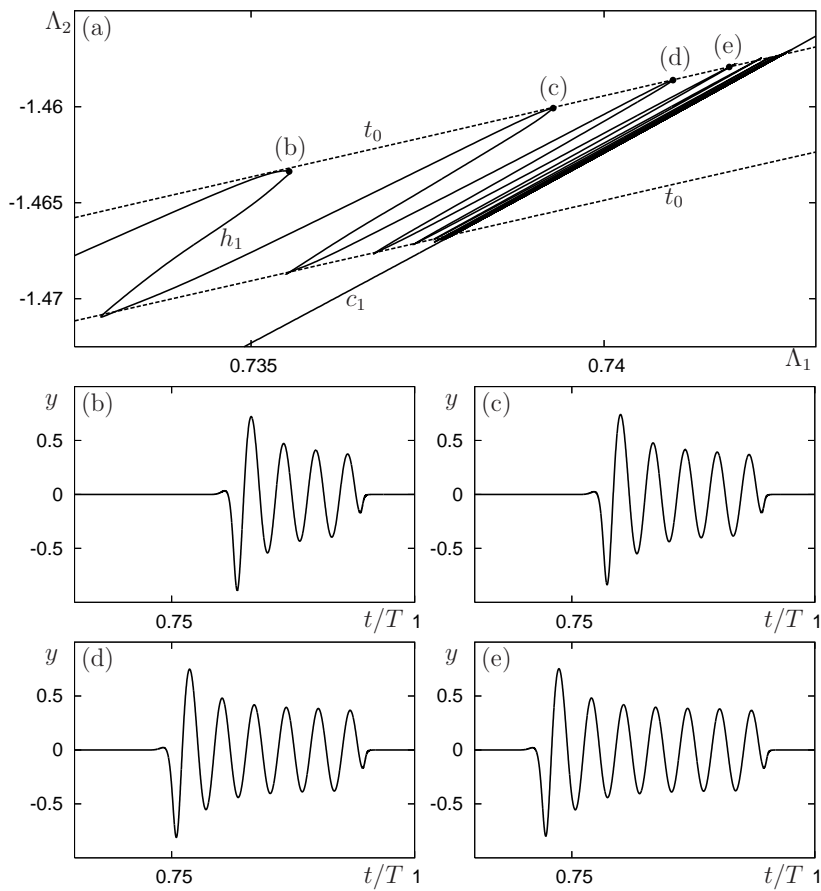} 
  \caption{\label{fig:bif_h1b}Panel (a) shows a detail of the bifurcation
  diagram of~\eref{eq:injsystem} in the $(\Lambda_1,\Lambda_2)$-plane.  Shown
  are the curve
  $h_1$ of a codimension-one homoclinic orbit to $E$, the curve $c_1$ of a codimension-one
  EtoP connection from $E$ to $P$, and the curve $t_0$ of tangencies of a
  codimension-zero EtoP connection from $P$ back to $E$.  Panels (b)--(e)
  show the relevant part of a time-versus-$y$ plot of selected homoclinic orbits
  on $h_1$ that illustrate how they take more rotations close to $P$ as
  they approach the complete EtoP cycle at the intersection of $c_1$ and
  $t_0$.  Here, $T$ is the total integration time of the computed orbit
  segments.} 
  \end{center} 
\end{figure}

In this section we consider a three-dimensional model
vector field that was introduced in~\cite{Krauskopf2006}, the numerical results
concerning the EtoP cycle presented here are from~\cite{Krauskopf2008}.  The vector field
describes the dynamics near a saddle-node Hopf bifurcation in the presence of a
global reinjection mechanism.  This type of dynamics with reinjection can be
found, for example, in laser systems \cite{tollenaar,physrep,zimmermann}, in
dynamo theory \cite{ashwin} and, more generally, near weak
resonances~\cite[chapter 4.3.2]{vitolo}.  The vector-field model can be written
in the form
\begin{eqnarray}
  \label{eq:injsystem} 
\fl
  \begin{array}{rcl}
    \dot{x} &=& \Lambda_1 x -
  \omega y - (\alpha x - \beta y) \sin \varphi - (x^2 + y^2)x
+ d ( 2\cos\varphi + \Lambda_2)^2,\quad\quad 
\\[1ex]
  \dot{y} &=& \Lambda_1 y + \omega x - (\alpha y + \beta x) \sin \varphi - (x^2 +
  y^2)y 
+ f ( 2\cos\varphi +
  \Lambda_2)^2,\quad\quad\ 
\\[1ex] 
\dot{\varphi} &=& \Lambda_2 +
  s(x^2+y^2)+2\cos\varphi+c(x^2+y^2)^2, 
\end{array} 
\end{eqnarray}
where $\Lambda_1$ and $\Lambda_2$ are the unfolding parameters of the saddle-node Hopf
bifurcation. The parameters $\omega$, $\alpha$, $\beta$, $s$, $c$, $d$ and $f$
determine the type of unfolding and we keep them fixed throughout at
\begin{eqnarray*} 
\omega = 1.0,\;\alpha = -1.0,\;\beta = 0,\;s = -1.0,\;c =
  0,\;d = 0.01,\;f = \pi d.  
\end{eqnarray*}
This choice corresponds to the unfolding of type $A$ that was studied in
\cite{Krauskopf2006}, where more details can be found.  The variable $\varphi$
is $2\pi$-periodic and global reinjection is realized by trajectories that
connect a neighborhood of a saddle-node Hopf point with one of its symmetric
copies. Hence,
a global reinjection corresponds to a large excursion near the circle
$\mathbb{S}^1 = \{ x=y=0 \}$.  Note that this circle is not invariant because
$d\not=0$ and $f\not=0$ (where rational ratios are avoided). 

As already shown in~\cite{Krauskopf2006}, the system has a wide variety of
homoclinic orbits of saddle-focus equilibria involving one or more global
reinjections.  The most interesting one in the present context is the homoclinic
orbit $h_1$ to the saddle-focus equilibrium $E=(0,0,\arccos(\nu_2/2))$ with one
global excursion, which accumulates on a curve segment in parameter space, while
the orbit itself accumulates on an EtoP cycle connecting $E$ and a periodic
orbit $P$.  \Fref{fig:bif_h1b} (a) shows the relevant part of the bifurcation
diagram of \eref{eq:injsystem} where the accumulation of $h_1$ takes place.
Both curves $t_0$ are continuation curves of the codimension-one heteroclinic
orbit $q_l$ connecting $E$ to a periodic orbit $P$. Here the codimension is
characterized by a quadratic tangency of the unstable manifold of $E$ and the
stable manifold of $P$.  The curve $c_1$ is the continuation curve of the
heteroclinic orbit $q_r$ connecting $P$ to $E$.  The dimensions of the unstable
manifold of $P$ and of the stable manifold of $E$ add up to the space dimension.
Hence, $q_r$ is also a codimension-one heteroclinic orbit.

The complete heteroclinic EtoP cycle is given by $E \cup q_l \cup P \cup q_r$.
Panels (b)--(e) show time-versus-$y$ plots of selected homoclinic orbits along
$h_1$ that illustrate how the homoclinic orbit accumulates on the EtoP cycle as
the bifurcation curve $h_1$ accumulates on the segment of $c_1$ where the
complete EtoP cycle exists.

Our goal here is to explain the accumulation process of $h_1$ analytically.  More
precisely, we are going to show two features of $h_1$.  First, we consider a
one-parameter family along a curve $(\Lambda_1(\lambda),\Lambda_2(\lambda))$
somewhere in the middle between the two curves $t_0$, and show that the
$1$-homoclinic orbit $h_1$ to $E$ accumulates at discrete points
on that parameter line.  Second, using an unfolding of the tangencies $t_0$ at
the intersection point of $t_0$ with $c_1$, we explain the shape of $h_1$ near
the turning points (near the points labeled (b)--(e) in
\fref{fig:bif_h1b} (a)) during the snaking process.

\subsection{Accumulation of homoclinic orbits near EtoP cycles}
Consider a one-parameter family of ODE \eref{eq:system}, that is $m=1$.
We assume that for $\lambda=0$ there is a heteroclinic EtoP cycle consisting of
a  hyperbolic equilibrium $E$,
a hyperbolic periodic orbit $P$ and heteroclinic orbits $q_r$ and $q_l$
connecting $P$ to $E$ and 
$E$ to $P$, respectively.
In accordance with the
notation in section~\sref{sec:twolinks}, we have $E=\gamma_l \equiv \gamma_r$
and $P = \gamma$.  

The aim of this section is to study homoclinic bifurcations from the given
heteroclinic EtoP cycle under some additional genericity conditions.

Throughout we consider the system for $\lambda\in(-c,c)$, $c$ sufficiently small.
We assume:
\begin{description}
 \item[(C1)] $\dim W^s(E) = k$ and $\dim W^u(P) = n-k$;
 \item[(C2)] $W^s(E)$ and $W^u(P)$ intersect in an isolated connecting orbit $q_r$;
\item[(C3)] The extended manifolds 
$\bigcup_{\lambda\in(-c,c)} W_\lambda^u(P)\times \{ \lambda \}$ 
and
$\bigcup_{\lambda\in(-c,c)} W_\lambda^s(E)\times \{ \lambda \}$ 
intersect transversally in $\R^{n}\times\R$;
\item[(C4)] $W^s(P)$ and $W^u(E)$ intersect transversally along $q_l$; 
\item[(C5)] The leading stable Floquet multiplier $\mu^s(\lambda)$ of $P$
    is real and simple;  
\item[(C6)] $q_l$ and $q_r$ approach $E$ and
    $P$ along the leading stable/unstable directions (non-orbit-flip condition).
\end{description}

\begin{rem}\label{rem:rel_to_appl}
 The following one-parameter subfamily of \eref{eq:injsystem} satisfies the
 conditions (C1) -- (C6): Let $\kappa = (\Lambda_1(\lambda),\Lambda_2(\lambda))$ and
 $(\Lambda_1(0),\Lambda_2(0))\in \hat c_1$. Here $\hat c_1$ denotes the part of
 $c_1$
 between the intersections of $c_1$ and the curves $t_0$. Further we assume that
 $\kappa$ and $c_1$ intersect transversally. 
\end{rem}

In order to apply the theory that we developed in the previous sections, we
introduce cross-sections $\Sigma_{l/r}$ of $q_{l/r}$.  Conditions (C1)--(C4)
ensure that $\dim U_l = \dim U_r=0$.  Therefore, corollary~\ref{cor:lhc} tells
us that for each sufficiently small $\lambda$ there is a unique homoclinic Lin
orbit $x = (x_l,x_m,x_r)$ connecting $E$ to itself.  The actual 1-homoclinic
orbits relate to solutions of the bifurcation equation \eref{eq:def_bif_eqn}.
However, because of (C1)--(C4) we have $\dim Z_l = 0$ and $\dim Z_r = 1$.  This
means that the (unique) homoclinic Lin orbit has exactly one discontinuity, and
this discontinuity is located inside $\Sigma_r$. In other words, $\Xi_l$ is identically zero
and the bifurcation equation for 1-homoclinic orbit reduces to (note that
no $u$ is involved) 
\[ 
\Xi_r(\nu,\lambda)=0.  
\] 
The jump function $\Xi_r$ is
defined by \eref{eq:def_Xi} and \eref{eq:jumpsplit} 
\[ 
\Xi_r(\nu,\lambda) :=
x_m(\nu,\lambda)(\tau)-x_r(\nu,\lambda)(0) = \xi_r^\infty(\lambda) +
\xi_r(\nu,\lambda).  
\] 
Condition (C3) yields that the manifolds $W^u(P)$ and
$W^s(E)$ split with non-vanishing velocity, in other words,
$D\xi_r^\infty(0)\not=0$.  Hence there is a parameter transformation such that
\begin{equation}\label{eq:lambda_rep}
 \xi_r^\infty(\lambda) = \lambda.
\end{equation}
For the remaining term $\xi_r(\nu,\lambda)$ we may employ corollary~\ref{cor:jump}:
\[
\xi_r(\nu,\lambda) = c_r^s(\lambda)(\mu^s)^\nu + o(|\mu^s|^\nu).
\]
Combining these terms yields the following lemma.
\begin{lem} 
  \label{lem:acchom} Assume (C1)--(C6) and \eref{eq:lambda_rep}.
  Then the jump function $\Xi_r(\nu,\lambda)$ can be written as
  \begin{equation}\label{eq:rep_Xi_r_app1}
    \Xi_r(\nu,\lambda) = \lambda + c_r^s(\lambda) \left(
    \mu^s(\lambda) \right)^\nu + o\left( | \mu^s(\lambda) |^\nu
    \right),
  \end{equation}
  where $c_r^s(\lambda): \R \to \R$ is smooth, and the
  $o(\cdot)$-term is valid for $\nu\to\infty$.
\end{lem}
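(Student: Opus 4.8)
The plan is to assemble the representation \eref{eq:rep_Xi_r_app1} directly from the decomposition of the jump function established in Section~\ref{sec:twolinks}, together with the parameter normalization \eref{eq:lambda_rep}. First I would recall that, by the definition of $\Xi_r$ in \eref{eq:def_Xi_lr} and the splitting \eref{eq:jumpsplit}, \eref{eq:def_xi} adapted to the right-hand short segment, we have
\begin{equation*}
 \Xi_r(\nu,\lambda) = \xi_r^\infty(\lambda) + \xi_r(\nu,\lambda),
\end{equation*}
where $\xi_r^\infty(\lambda) = q_r^-(\lambda)(0) - q_r^+(\lambda)(0)$ measures the splitting of $W_\lambda^u(P)$ and $W_\lambda^s(E)$ along $q_r$, and $\xi_r(\nu,\lambda)$ is the contribution coming from the solution of the fixed point equation. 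The first term is handled by condition (C3): transversality of the extended manifolds in $\R^n\times\R$ is exactly the statement that $D\xi_r^\infty(0)\neq 0$ when $\xi_r^\infty$ is viewed as a map into the one-dimensional space $Z_r$ (here $\dim Z_r = 1$ by (C1)--(C4)). Hence $\xi_r^\infty$ is, after the smooth parameter change already fixed in \eref{eq:lambda_rep}, simply the identity: $\xi_r^\infty(\lambda) = \lambda$.

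Next I would invoke the jump estimate of Section~\ref{sec:jump_3} — specifically the analogue of Corollary~\ref{cor:jump} for the right-hand jump $\xi_r$, which is valid by Remark~\ref{rem:jump}. One needs to check that the hypotheses of that corollary are met in the present EtoP setting: Hypothesis~\ref{hyp:leading_ev} follows from (C5) (leading stable Floquet multiplier of $P=\gamma$ real and simple); Hypothesis~\ref{hyp:dim_Z} holds since $\dim Z_r = 1$; the non-orbit-flip condition for $q_r^-$ (i.e.\ $q_r$ not in the strong unstable manifold of $P$) is precisely part of (C6); and Hypothesis~\ref{hyp:orth_decomp} can be arranged by choosing the scalar product appropriately, as in Section~\ref{sec:jump_2}. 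Under these conditions the corollary yields a smooth function $c_r^s:\R\to\R$ with
\begin{equation*}
 \xi_r(\nu,\lambda) = c_r^s(\lambda)\left(\mu^s(\lambda)\right)^\nu + o\left(|\mu^s(\lambda)|^\nu\right),
\end{equation*}
the $o(\cdot)$-term being uniform as $\nu\to\infty$. Note here that since $\dim U_r = 0$ there is no $u$-dependence to track, which simplifies the bookkeeping compared to the general statement of Corollary~\ref{cor:jump}.

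Adding the two pieces gives \eref{eq:rep_Xi_r_app1}. I expect the only genuinely delicate point to be the careful matching of the orientation/sign conventions in the $o(|\mu^s|^\nu)$ estimate — in particular, verifying that the error terms coming from the coupling inside $\Sigma_\gamma$ (the $b_\perp^-$ contributions from Lemma~\ref{lem:lambdalemma_b}) and from the tail integrals along $\gamma$ are genuinely of lower order than $(\mu^s)^\nu$, and that the leading coefficient $c_r^s(\lambda)$ depends smoothly on $\lambda$ through the chain of fixed point constructions. All of this is already contained in Corollary~\ref{cor:jump} (via Remark~\ref{rem:jump}), so the proof is essentially a matter of specializing that corollary and substituting the normal form \eref{eq:lambda_rep}; no new estimates are required.
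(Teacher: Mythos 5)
Your proposal is correct and follows essentially the same route as the paper: the paper likewise splits $\Xi_r=\xi_r^\infty+\xi_r$, uses (C3) to justify the normalization \eref{eq:lambda_rep} of $\xi_r^\infty$, and then applies corollary~\ref{cor:jump} (via remark~\ref{rem:jump}) to obtain $\xi_r(\nu,\lambda)=c_r^s(\lambda)(\mu^s)^\nu+o(|\mu^s|^\nu)$. Your explicit verification that (C5), (C6), $\dim Z_r=1$ and $\dim U_r=0$ supply the hypotheses of that corollary is a detail the paper leaves implicit, but it is the intended argument.
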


A direct consequence of this lemma is the following corollary:
\begin{cor}\label{cor:Xi_r_zeros}
Under the assumptions of lemma~\ref{lem:acchom}
  there is a constant $N\in\N$ such that for all $\nu\in\N$, $\nu>N$, there is
  a $\lambda_\nu$ such that $\Xi_r(\nu,\lambda_\nu) = 0$.  Moreover,
  $\lambda_\nu$ tends to $0$ as $\nu\to\infty$.
\end{cor}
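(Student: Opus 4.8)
The plan is to solve the scalar equation $\Xi_r(\nu,\cdot)=0$ by the intermediate value theorem, exploiting that in the representation~\eref{eq:rep_Xi_r_app1} the linear term $\lambda$ dominates the exponentially small remainder once $\nu$ is large. First I would record the two structural facts that make this work. Since $\mu^s$ is the leading \emph{stable} Floquet multiplier of $P$, we have $|\mu^s(0)|<1$, so by continuity there exist $\theta\in(0,1)$ and $\rho_0\in(0,c)$ with $|\mu^s(\lambda)|\le\theta$ for all $|\lambda|\le\rho_0$; and since $c_r^s$ is smooth it is bounded on $[-\rho_0,\rho_0]$, say $|c_r^s(\lambda)|\le M$ there. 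Using that the $o(|\mu^s(\lambda)|^\nu)$-term in~\eref{eq:rep_Xi_r_app1} is uniform in $\lambda$ on a neighborhood of $0$ (which can be extracted from the derivative estimates underlying lemma~\ref{lem:jump}/corollary~\ref{cor:jump}), I would then fix $N\in\N$ so large that for every $\nu>N$ and every $|\lambda|\le\rho_0$ the remainder $c_r^s(\lambda)(\mu^s(\lambda))^\nu+o(|\mu^s(\lambda)|^\nu)$ has modulus at most $(M+1)\theta^\nu$, and moreover $(M+2)\theta^\nu<\rho_0$.

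Next, for each $\nu>N$ I would set $\rho_\nu:=(M+2)\theta^\nu$, so that $\rho_\nu\le\rho_0<c$ lies in the range of validity of corollary~\ref{cor:lhc} and lemma~\ref{lem:acchom}. For $|\lambda|\le\rho_\nu$ the remainder is bounded by $(M+1)\theta^\nu$, hence $\Xi_r(\nu,\rho_\nu)\ge\rho_\nu-(M+1)\theta^\nu=\theta^\nu>0$ and, symmetrically, $\Xi_r(\nu,-\rho_\nu)\le-\theta^\nu<0$. Since $\Xi_r(\nu,\cdot)$ is continuous (indeed smooth, by corollary~\ref{cor:lhc}), the intermediate value theorem yields some $\lambda_\nu\in(-\rho_\nu,\rho_\nu)$ with $\Xi_r(\nu,\lambda_\nu)=0$. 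Finally $|\lambda_\nu|<\rho_\nu=(M+2)\theta^\nu\to0$ as $\nu\to\infty$, which is the claimed accumulation at $\lambda=0$.

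I do not expect a serious obstacle here; the only point needing care is to choose $N$ and the radius $\rho_\nu$ consistently, i.e.\ simultaneously small enough that $\rho_\nu$ stays inside the parameter interval on which the Lin orbit (and hence $\Xi_r$) is defined and the estimate~\eref{eq:rep_Xi_r_app1} applies, and large enough that the linear term $\lambda=\pm\rho_\nu$ strictly dominates the exponentially small tail. If one additionally wanted uniqueness of $\lambda_\nu$ (not asserted in the statement), one could instead recast the equation as the fixed point problem $\lambda=-c_r^s(\lambda)(\mu^s(\lambda))^\nu-o(|\mu^s(\lambda)|^\nu)$ and check that for large $\nu$ the right-hand side maps $\{|\lambda|\le\rho_\nu\}$ into itself and is a contraction there, using in addition a $C^1$-smallness bound on the $o$-term; for the present corollary the intermediate value argument is sufficient.
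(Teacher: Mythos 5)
Your argument is correct and is precisely the standard intermediate-value/dominance argument that the paper leaves implicit when it calls the corollary ``a direct consequence'' of lemma~\ref{lem:acchom}; the paper offers no written proof. Your care about uniformity of the $o(|\mu^s(\lambda)|^\nu)$-term in $\lambda$ near $0$ and about keeping $\rho_\nu$ inside the validity range of corollary~\ref{cor:lhc} is exactly the right bookkeeping, and since $|\mu^s(0)|<1$ the choice $\rho_\nu=(M+2)\theta^\nu$ does the job.
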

Note that the zeros of $\Xi_r$ correspond to $1$-homoclinic orbits to $E$.
Hence,
corollary~\ref{cor:Xi_r_zeros} says that for each sufficiently large natural
$\nu$ there is a 1-homoclinic orbit with $\nu$ rotations near $P$.  Further, we
see that in parameter space these orbit accumulate at $\lambda=0$ --- in state
space they accumulate onto the original EtoP cycle.

For more precise assertions we need to know that $c_r^s(0)\not=0$, which is true
if $n=3$, see in the proof of corollary~\ref{cor:jump_a}. Therefore we get:
\begin{cor}\label{cor:Xi_r_zeros_n=3}
 Let $n=3$. Assume further (C1)--(C6) and \eref{eq:lambda_rep}.
\begin{enumerate}
 \item If $\mu^s>0$, then there is a monotonically increasing/decreasing (if
   $c_r^s(0)>0$/$c_r^s(0)<0$) sequence $(\lambda_\nu)$, $\lambda_\nu\to 0$, such
   that $\Xi_r(\nu,\lambda_\nu) = 0$.  
 \item If $\mu^s<0$, then there is an
   alternating sequence $(\lambda_\nu)$, $\lambda_\nu\to 0$, such that
   $\Xi_r(\nu,\lambda_\nu) = 0$.
\end{enumerate}
The order in which $\lambda_\nu$ approaches $0$ for $\nu\to\infty$ is given by $\mu^\nu$.
\end{cor}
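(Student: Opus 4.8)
The statement to prove is Corollary~\ref{cor:Xi_r_zeros_n=3}, which refines Corollary~\ref{cor:Xi_r_zeros} under the extra assumption $n=3$. We may take as given the representation
\[
 \Xi_r(\nu,\lambda) = \lambda + c_r^s(\lambda)\,(\mu^s(\lambda))^\nu + o\!\left(|\mu^s(\lambda)|^\nu\right)
\]
from Lemma~\ref{lem:acchom}, together with the fact (noted just before the corollary, and proved via Corollary~\ref{cor:jump_a}) that $c_r^s(0)\neq 0$ when $n=3$. The task is therefore purely to analyze the zero set of this scalar equation in $\lambda$ for large $\nu$, keeping track of the sign of $\mu^s$ and of $c_r^s(0)$.

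\emph{Step 1: localize the zero.} Fix $\nu$ large. As in the proof of Corollary~\ref{cor:Xi_r_zeros}, since $c_r^s$ is smooth and $|\mu^s(0)|<1$, the function $\lambda\mapsto \Xi_r(\nu,\lambda)$ is a small perturbation of the identity: on a fixed small interval $\lambda\in(-c,c)$ we have $\Xi_r(\nu,\lambda)=\lambda+O(|\mu^s|^\nu)$ with $O(\cdot)$ uniform, and moreover $\partial_\lambda\Xi_r(\nu,\lambda)=1+O(\nu|\mu^s|^\nu)\to 1$. Hence for $\nu>N$ there is a unique zero $\lambda_\nu$, it satisfies $\lambda_\nu=-c_r^s(\lambda_\nu)(\mu^s(\lambda_\nu))^\nu+o(|\mu^s(\lambda_\nu)|^\nu)$, and in particular $\lambda_\nu\to 0$. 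Feeding $\lambda_\nu\to 0$ back into the right-hand side and using continuity of $c_r^s$ and $\mu^s$ gives the sharp asymptotic
\[
 \lambda_\nu = -c_r^s(0)\,(\mu^s(0))^\nu + o\!\left(|\mu^s(0)|^\nu\right),
\]
which already yields the final sentence of the corollary: the order of approach of $\lambda_\nu$ to $0$ is governed by $(\mu^s)^\nu$.

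\emph{Step 2: read off the sign pattern.} Everything now follows from the sign of the leading term $-c_r^s(0)(\mu^s(0))^\nu$. If $\mu^s>0$, then $(\mu^s)^\nu>0$ for all $\nu$, so $\lambda_\nu$ has the constant sign $-\operatorname{sgn}c_r^s(0)$ for all large $\nu$; a short monotonicity argument (compare $\lambda_{\nu+1}/\lambda_\nu = \mu^s(1+o(1))$, which lies in $(0,1)$ for large $\nu$) shows the sequence is eventually strictly monotone — increasing towards $0$ from below if $c_r^s(0)>0$, decreasing towards $0$ from above if $c_r^s(0)<0$ — giving part (i). If $\mu^s<0$, then $(\mu^s)^\nu$ alternates in sign, hence so does $\lambda_\nu$ for large $\nu$, giving part (ii); again $|\lambda_{\nu+1}/\lambda_\nu|\to|\mu^s|<1$ so the absolute values decrease monotonically to zero.

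\emph{Main obstacle.} There is no serious analytic obstacle here: the hard work has already been done in Lemma~\ref{lem:acchom} and Corollary~\ref{cor:jump_a}. The only point requiring a little care is the passage from ``$\lambda_\nu\to 0$'' (the soft statement of Corollary~\ref{cor:Xi_r_zeros}) to the \emph{leading-order asymptotic} $\lambda_\nu\sim -c_r^s(0)(\mu^s(0))^\nu$: one must check that replacing $c_r^s(\lambda_\nu)$ by $c_r^s(0)$ and $\mu^s(\lambda_\nu)^\nu$ by $\mu^s(0)^\nu$ only perturbs the answer by $o(|\mu^s(0)|^\nu)$. The first replacement is immediate from smoothness of $c_r^s$ and $\lambda_\nu\to 0$. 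The second is the slightly delicate one, since raising to the $\nu$-th power can amplify errors: writing $\mu^s(\lambda_\nu)=\mu^s(0)(1+\epsilon_\nu)$ with $\epsilon_\nu=O(|\lambda_\nu|)=O(|\mu^s(0)|^\nu)\to 0$, one has $(1+\epsilon_\nu)^\nu = \exp(\nu\log(1+\epsilon_\nu)) = \exp(O(\nu|\mu^s(0)|^\nu)) \to 1$, so the error stays of the required order. With this checked, parts (i)–(ii) and the final statement are immediate, and the constant $N$ is taken large enough for all the above estimates and the local uniqueness of $\lambda_\nu$ to be valid.
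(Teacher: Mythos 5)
Your proof is correct and follows exactly the route the paper intends: the corollary is presented as a direct consequence of the representation in lemma~\ref{lem:acchom} together with $c_r^s(0)\neq 0$ for $n=3$, and your Steps 1--2 simply make that deduction explicit, including the one genuinely delicate point (that $(\mu^s(\lambda_\nu))^\nu$ may be replaced by $(\mu^s(0))^\nu$ up to $o(|\mu^s(0)|^\nu)$ because $\nu|\mu^s(0)|^\nu\to 0$). The only claim going beyond what lemma~\ref{lem:acchom} literally supplies is the derivative bound $\partial_\lambda\Xi_r=1+O(\nu|\mu^s|^\nu)$ (the lemma controls $\Xi_r$ itself, not its $\lambda$-derivative), but you use it only for uniqueness of $\lambda_\nu$, which the corollary does not assert; existence alone already follows from the intermediate value theorem applied to $\Xi_r(\nu,\lambda)=\lambda+O(|\mu^s|^\nu)$.
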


A subfamily of \eref{eq:injsystem} as described in remark~\ref{rem:rel_to_appl}
is related to (i) of corollary~\ref{cor:Xi_r_zeros_n=3}.  Altogether, in respect
to \eref{eq:injsystem} corollary~\ref{cor:Xi_r_zeros_n=3} explains the
accumulation process of $h_1$ along a line $\kappa$ according to
remark~\ref{rem:rel_to_appl}.  But it neither explains the global snaking
behavior of $h_1$ nor the local behavior of $h_1$ near the turning points, cf.
(b) -- (e) in panel (a) of figure~\ref{fig:bif_h1b}, which we
consider in the following section.

\subsection{Homoclinic orbits near degenerate EtoP cycles}

In this section we consider a codimension-two EtoP cycle. For that we modify in
the formerly introduced EtoP cycle only the heteroclinic orbit $q_l$. Here we
assume that along $q_l$ the unstable manifold of $E$ and the stable manifold of
$P$ do no longer intersect transversally but have a quadratic tangency. We use
parameters $\lambda_l$ and $\lambda_r$ to unfold the codimension-one
heteroclinic orbits $q_l$ and $q_r$, respectively, and write $\lambda
=(\lambda_r,\lambda_l)\in\R^2$.  Here $\lambda_r$ plays the same role as
$\lambda$ in the previous section, and $\lambda_l$ moves the manifolds $W^u(E)$
and $W^s(P)$ against each other in a direction which is orthogonal to the sum of
the tanget spaces of these manifolds.

We consider system~\eref{eq:system} for $\abs{\lambda}\in(-c,c)$, $c$
sufficiently small.  In detail we assume the following: We adopt the assumptions (C1), (C2)
and (C6) from the previous section as (C1'), (C2') and (C6'), respectively.  In
(C3) we only replace $\lambda$ by $\lambda_r$:
\begin{description}
  \item[(C3')] the extended manifolds 
$\cup_{\lambda_r\in(-c,c)} W_{(\lambda_r,0)}^u(P) \times \{ \lambda_r\}$
and
$\cup_{\lambda_r\in(-c,c)}W_{(\lambda_r,0)}^s(E) \times \{ \lambda_r \}$ 
intersect transversally in $\R^n\times\R$.
\end{description}
\begin{description}
  \item[(C4')] $W^s(P)$ and $W^u(E)$ have (along $q_l$) a quadratic tangency, and 
\\
the extended manifolds 
$\cup_{\lambda_l\in(-c,c)} W_{(0,\lambda_l)}^u(E) \times \{ \lambda_l\}$ 
and
$\cup_{\lambda_l\in(-c,c)}W_{(0,\lambda_l)}^s(P) \times \{ \lambda_l \}$ 
intersect transversally in $\R^n\times\R$.
\end{description}
In contrast to (C5), here we also have an assumption on the leading unstable Floquet multiplier
\begin{description}
  \item[(C5')] the leading stable and unstable Floquet multipliers
    $\mu^{s/u}(\lambda)$ of $P$ are real and simple.
\end{description}

\begin{rem}
Consider \eref{eq:injsystem}.
There is a parameter transformation 
$(\lambda_r,\lambda_l)\leftrightarrow (\Lambda_1,\Lambda_2)$, with $(\Lambda_1(0),\Lambda_2(0))
  \in c_1 \cap t_0$, such that in the new parameters~\eref{eq:injsystem} satisfies (C1') -- (C6').
\end{rem}

The above assumptions imply the following dimensions of the involved linear
subspaces: $\dim Z_r = 1$, $\dim U_r = 0$, $\dim Z_l = 1$, $\dim U_l = 1$.
Hence, the variable $u = u_l$ appears in the jump function and in the
bifurcation equation, respectively.

The jump function $\Xi$ now consists of two parts, each representing one jump:
\begin{eqnarray*}
  \Xi(\nu,u,\lambda) = \left( 
\begin{array}{c} 
\Xi_r(\nu,u,\lambda)
\\
\Xi_l(\nu,u,\lambda) 
\end{array}
                      \right),
\end{eqnarray*}
where
$\Xi_{r/l}(\nu,u,\lambda)=\xi^\infty_{r/l}(u,\lambda)+\xi_{r/l}(\nu,u,\lambda)$.
Indeed $\xi^\infty_{r}$ depends only on $\lambda_r$ --- more precisely it has
(after an appropriate parameter transformation) the form, see also~\eref{eq:lambda_rep},
\begin{equation}\label{eq:lambda_rep1}
 \xi_r^\infty(\lambda_r) = \lambda_r.
\end{equation}
Further $\xi^\infty_{l}$ depends only on $\lambda_l$ and $u$, and
the quadratic tangency within $\Sigma_l$ can be modeled by 
\begin{equation}\label{eq:lambda_rep2}
 \xi_l^\infty(u,\lambda_l) = \lambda_l-u^2.
\end{equation}
Altogether this yields
\begin{lem}\label{lem:turning} 
Assume (C1')--(C6') and \eref{eq:lambda_rep1}, \eref{eq:lambda_rep2}.
    Then the bifurcation equation for 1-homoclinic orbits can be written as 
  \begin{equation} \label{eq:bif_eqn_1-hom}
\fl
    \Xi(\nu,u,\lambda) = \left(
    \begin{array}{c}\lambda_r\\
      \lambda_l-u^2
    \end{array}\right) + \left(
      \begin{array}{c} c^s_r(u,\lambda) \left( \mu^s(\lambda) \right)^\nu +
        o\left( |\mu^s(\lambda)|^{\nu} \right) \\
        c^u_l(u,\lambda) \left(
        \mu^u(\lambda) \right)^{-\nu} + o\left( | \mu^u(\lambda)
        |^{-\nu} \right)
      \end{array}\right)=0,
  \end{equation}
  where $c^{s/u}:\R\times\R^2 \to \R$, are smooth and the
  $o(\cdot)$-terms are valid for $\nu\to\infty$.
\end{lem}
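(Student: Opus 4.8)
The plan is to obtain \eref{eq:bif_eqn_1-hom} by combining the splitting of the two jump functions with the model forms \eref{eq:lambda_rep1}, \eref{eq:lambda_rep2} and the asymptotic estimates of corollary~\ref{cor:jump}. In the present setup the EtoP cycle is read as the orbit segment $E\cup q_l\cup P\cup q_r\cup E$, so that $\gamma_l\equiv\gamma_r=E$, $\gamma=P$, and the left and right short heteroclinic chain segments are $E\cup q_l\cup P$ and $P\cup q_r\cup E$. Since the stated genericity conditions force $\dim Z_r=\dim Z_l=1$, each of the two discontinuities is a scalar multiple of a fixed unit vector, corollary~\ref{cor:lhc} supplies a unique homoclinic Lin orbit $(x_l,x_m,x_r)$, and according to \eref{eq:jumpsplit} and \eref{eq:def_Xi_lr} its jump function decomposes as $\Xi=(\Xi_r,\Xi_l)$ with $\Xi_{r/l}=\xi^\infty_{r/l}+\xi_{r/l}$.

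First I would dispose of the $\xi^\infty$-terms, which record only the relative position of the global manifolds inside $\Sigma_{r/l}$. By (C3') the extended manifolds $W^u(P)$ and $W^s(E)$ split with non-vanishing speed in the $\lambda_r$-direction, hence $\xi^\infty_r$ depends, to leading order, linearly on $\lambda_r$ with non-zero coefficient and, after the parameter transformation already announced, takes the form \eref{eq:lambda_rep1}. For $\xi^\infty_l$ one uses that $U_l$ collects the common tangent directions of $W^u(E)$ and $W^s(P)$ along $q_l$ in $\Sigma_l$, that these manifolds meet there in a quadratic tangency, and that by (C4') the $\lambda_l$-unfolding is transversal; a Morse-type reparametrization in $(u,\lambda_l)$ then normalises $\xi^\infty_l$ to the model \eref{eq:lambda_rep2}. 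These normalisations are precisely the modelling assumptions stated immediately before the lemma, so in the write-up I would simply invoke \eref{eq:lambda_rep1} and \eref{eq:lambda_rep2}.

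Next comes the $\xi$-part. For the left jump I apply corollary~\ref{cor:jump} to the short Lin orbit segment determined by $q_l^{-/+}$ and $v_l^{-/+}$: hypothesis~\ref{hyp:leading_ev} follows from (C5'), hypothesis~\ref{hyp:dim_Z} holds since $\dim Z_l=1$, hypothesis~\ref{hyp:orth_decomp} is arranged by choosing the scalar product on $\Sigma_l$ adapted to the decomposition \eref{eq:Y}, and the non-orbit-flip condition for $q_l^+$ is contained in (C6'). This produces a smooth function $c^u_l:\R\times\R^2\to\R$ with $\xi_l(\nu,u,\lambda)=c^u_l(u,\lambda)\bigl(\mu^u(\lambda)\bigr)^{-\nu}+o\bigl(|\mu^u(\lambda)|^{-\nu}\bigr)$. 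For the right jump I argue symmetrically, using remark~\ref{rem:jump}: the relevant leading multiplier of $\gamma=P$ is now the stable one $\mu^s(\lambda)$, which is real and simple by (C5'), and one obtains a smooth $c^s_r$ with $\xi_r(\nu,u,\lambda)=c^s_r(u,\lambda)\bigl(\mu^s(\lambda)\bigr)^{\nu}+o\bigl(|\mu^s(\lambda)|^{\nu}\bigr)$, the $u$-dependent counterpart of lemma~\ref{lem:acchom}. Substituting these expansions together with \eref{eq:lambda_rep1} and \eref{eq:lambda_rep2} into $\Xi=(\Xi_r,\Xi_l)=0$ yields \eref{eq:bif_eqn_1-hom}.

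The step that requires the most care is the derivation of the model \eref{eq:lambda_rep2}: one must check that the quadratic tangency along $q_l$ genuinely yields a non-degenerate quadratic dependence of $\xi^\infty_l$ on the single variable $u\in U_l$, with no residual mixed term in $u$ and $\lambda_l$ that cannot be absorbed by a change of coordinates, and that the transversality in (C4') makes the $\lambda_l$-coefficient non-zero so that the normalisation to $\lambda_l-u^2$ is legitimate. Checking that corollary~\ref{cor:jump} and its counterpart for the right jump survive the presence of the extra parameter $u=u_l$ is by contrast routine, since all estimates underlying \sref{sec:twolinks} are already uniform in $u$.
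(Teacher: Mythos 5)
Your proposal is correct and follows essentially the same route as the paper, which derives the lemma by exactly this combination: the splitting $\Xi_{r/l}=\xi^\infty_{r/l}+\xi_{r/l}$, the normal forms \eref{eq:lambda_rep1}--\eref{eq:lambda_rep2} taken as modelling hypotheses, and corollary~\ref{cor:jump} (together with remark~\ref{rem:jump} for the right jump) for the exponentially small terms. Your closing caveat about the legitimacy of the normalisation $\xi^\infty_l=\lambda_l-u^2$ is well placed, but since the lemma assumes \eref{eq:lambda_rep2} as a hypothesis, nothing further is required there.
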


\begin{figure}[ht]
  \begin{center} \includegraphics[scale=.8]{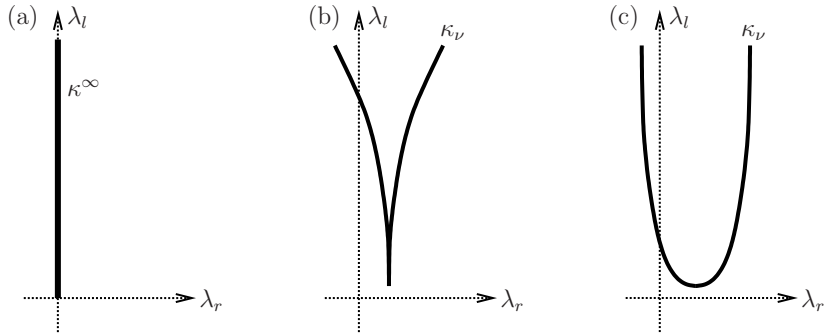} 
  \caption{\label{fig:kappa}Solution curves of \eref{eq:bif_eqn_cycles} and
  \eref{eq:bif_eqn_1-hom} in the $(\lambda_r,\lambda_l)$-plane for fixed $\nu$.
  Panel (a) shows the solution curve $\kappa^\infty$ of the unperturbed equation
  \eref{eq:bif_eqn_cycles}, which is a simple parabola in the
  $(\lambda_l,u)$-plane and a ray in the $(\lambda_r,\lambda_l)$-plane that is
  covered twice as $u$ is varied.  Panels (b) and (c) show possible
  perturbations of $\kappa^\infty$.} 
  \end{center}
\end{figure}

Solutions of the `unperturbed equation'
\begin{equation}\label{eq:bif_eqn_cycles}
  \left(
    \begin{array}{c}
      \lambda_r \\ \lambda_l-u^2
    \end{array}
  \right)
=0
\end{equation}
correspond to EtoP cycles near the original one. Solutions
$(\lambda_r,\lambda_l)(u)$ of \eref{eq:bif_eqn_cycles} are displayed in
figure~\ref{fig:kappa}, panel (a). For each $\lambda_l>0$ there are two
different heteroclinic orbits connecting $E$ to $P$. Therefore, for each nonzero
$\lambda\in\kappa^\infty$ there are two different EtoP cycles satisfying (C1) --
(C6) (along a curve intersecting $\kappa^\infty$ transversally).  So, according
to corollary~\ref{cor:Xi_r_zeros}, we expect for fixed $\nu$ and $\lambda_l>0$
two different $1$-homoclinic orbits $H_1(\nu,\lambda_l)$ and $H_2(\nu,\lambda_l)$
to $E$ with $\nu$ rotations near $P$. Both orbits can be continued in parameter
space.  Amazingly, they are located on the same continuation curve.  Indeed, solutions
of \eref{eq:bif_eqn_1-hom} for fixed $\nu$ are small perturbations of the
solutions of \eref{eq:bif_eqn_cycles}. In parameter space we find those
solutions on perturbations $\kappa_\nu$ of $\kappa^\infty$ as displayed in
figure~\ref{fig:kappa} (b) or (c).  Both, $H_1(\nu,\lambda_l)$ and
$H_2(\nu,\lambda_l)$ are on the same $\kappa_\nu$, but on different branches.
For decreasing $\lambda_l$, the orbits $H_1(\nu,\lambda_l)$ and $H_2(\nu,\lambda_l)$
finally `merge' in the vertex $\lambda_\nu$ of $\kappa_\nu$.

Generically one expects a perturbation of $\kappa^\infty$ as depicted in
figure~\ref{fig:kappa} (c). Below we show that in three-dimensional state
space such a parabola like curve will indeed appear --- as the numerical
computations suggest.

\begin{cor}\label{cor:app_turning} 
Let $n=3$. Assume further (C1')--(C6') and \eref{eq:lambda_rep1},
\eref{eq:lambda_rep2}.  For each (sufficiently large) $\nu\in\N$ there is a
solution curve of $\kappa_\nu=(\lambda_r,\lambda_l)(u)$ of the bifurcation
equation~\eref{eq:bif_eqn_1-hom} for 1-homoclinic orbits.  Further, there is a unique $u_\nu$ (for
each $\nu$) such that $D\lambda_l(u_\nu)=0$ and
$D\lambda_r(u_\nu)\not=0$.  The points
$\lambda_\nu:=(\lambda_r,\lambda_l)(u_\nu)$ accumulate at $\lambda=0$, and the
curvature of $\kappa_\nu$ in $\lambda_\nu$ tends to infinity as $\nu$ tends to
infinity.
\end{cor}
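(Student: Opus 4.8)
The plan is to treat the bifurcation equation~\eref{eq:bif_eqn_1-hom} (provided by lemma~\ref{lem:turning}), for fixed large $\nu$, as a $C^k$-small perturbation of the limit equation~\eref{eq:bif_eqn_cycles}, whose solution set is the parabola $\{\lambda_r=0,\ \lambda_l=u^2\}$, and to resolve it by successive applications of the implicit function theorem. Throughout I write $\varepsilon_s:=|\mu^s(\lambda)|^{\nu}$ and $\varepsilon_u:=|\mu^u(\lambda)|^{-\nu}$; since $|\mu^s|<1<|\mu^u|$ these are exponentially small, and any $\nu$-prefactor produced by differentiating $(\mu^{s/u}(\lambda(u)))^{\pm\nu}$ is harmless because $\nu\varepsilon_{s/u}\le C_\delta\,|\mu^{s/u}|^{\mp(1-\delta)\nu}$ is of the same exponential order as a slightly smaller power. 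The derivative bounds that make this bookkeeping rigorous are those underlying corollary~\ref{cor:jump}, lemma~\ref{lem:lambdalemma_b} and~\eref{eq:est_bperpder}.

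First I would construct the curve $\kappa_\nu$. The component $\Xi_r=0$ of~\eref{eq:bif_eqn_1-hom} reads $\lambda_r=-c^s_r(u,\lambda)(\mu^s(\lambda))^{\nu}+o(\varepsilon_s)$; since the right-hand side has $\lambda_r$-derivative of order $\nu\varepsilon_s$, the implicit function theorem yields a unique smooth $\lambda_r=\mathcal{R}(u,\lambda_l,\nu)=\mathcal{O}(\varepsilon_s)$. Substituting this into $\Xi_l=0$, i.e.\ into $\lambda_l=u^2-c^u_l(u,\lambda)(\mu^u(\lambda))^{-\nu}+o(\varepsilon_u)$, the right-hand side is $C^k$-close to $u^2$, and solving for $\lambda_l$ gives a unique smooth $\lambda_l=\mathcal{L}(u,\nu)=u^2+\mathcal{O}(\varepsilon_u)$. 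Setting $\lambda_l(u):=\mathcal{L}(u,\nu)$ and $\lambda_r(u):=\mathcal{R}(u,\mathcal{L}(u,\nu),\nu)$ defines $\kappa_\nu=(\lambda_r,\lambda_l)(u)$; all these estimates hold in $C^2$ uniformly in $u$, so in particular $\lambda_l(\cdot)\to u^2$, $D\lambda_l(\cdot)\to 2u$ and $D^2\lambda_l(\cdot)\to2$ as $\nu\to\infty$.

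Next I would locate the vertex and obtain the accumulation. Because $D\lambda_l(u)=2u+\mathcal{O}(\nu\varepsilon_u)$ and $D^2\lambda_l\to2$, the function $\lambda_l(\cdot)$ is strictly convex for $\nu$ large, hence has a unique critical point $u_\nu$; this $u_\nu$ is therefore the unique value with $D\lambda_l(u_\nu)=0$, and $u_\nu=\mathcal{O}(\nu\varepsilon_u)\to0$. Consequently $\lambda_\nu:=(\lambda_r,\lambda_l)(u_\nu)$ satisfies $\lambda_r(u_\nu)=\mathcal{O}(\varepsilon_s)\to0$ and $\lambda_l(u_\nu)=u_\nu^2+\mathcal{O}(\varepsilon_u)\to0$, so $\lambda_\nu\to0$. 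Differentiating the identity $\lambda_r(u)=-c^s_r(u,\lambda(u))(\mu^s(\lambda(u)))^{\nu}+o(\varepsilon_s)$ along $\kappa_\nu$ and evaluating at $u_\nu$: every term containing $D\lambda_l(u_\nu)$ vanishes, and after moving the terms containing $D\lambda_r(u_\nu)$ to the left (they combine with $1$ into a factor $1+\mathcal{O}(\nu\varepsilon_s)$) one gets
\[
 D\lambda_r(u_\nu)\,(1+\mathcal{O}(\nu\varepsilon_s))= -\,\partial_u c^s_r(u_\nu,\lambda(u_\nu))\,(\mu^s(\lambda(u_\nu)))^{\nu}+o(\varepsilon_s).
\]
The crux --- and the step I expect to be the main obstacle --- is to show that the leading coefficient on the right is nonzero, i.e.\ $\partial_u c^s_r(0,0)\not=0$; this is exactly where the hypothesis $n=3$ is used. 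For $n=3$ one has $\dim Y_\gamma=2$, so hypotheses~\ref{hyp:approach_q} and~\ref{hyp:approach_b_perp} are automatically satisfied, and $c^s_r$ admits the explicit representation which is the $\Sigma_r$-analogue of~\eref{eq:def_c_l^u}; inserting the $n=3$-specific facts used in the proof of corollary~\ref{cor:jump_a} (in particular $(\id-P^-_r(0,0)(0))^{T}z_r=z_r$ and the relevant non-orthogonality of eigenvectors of the linearised Poincar\'e map) a linear-algebra computation should show that the $u$-dependence of $c^s_r$ is non-degenerate at the origin. The delicate points are this nonvanishing itself and the $C^1$-control of the various $o(\varepsilon_s)$-remainders that is needed to conclude they are genuinely of smaller order than $\partial_u c^s_r\cdot\varepsilon_s$.

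Finally, with $D\lambda_l(u_\nu)=0$ and $D\lambda_r(u_\nu)\not=0$ established, near $\lambda_\nu$ the curve $\kappa_\nu$ is a graph $\lambda_l=g_\nu(\lambda_r)$ with $g_\nu'(\lambda_r(u_\nu))=0$ and $g_\nu''(\lambda_r(u_\nu))=D^2\lambda_l(u_\nu)/(D\lambda_r(u_\nu))^2$. Since the numerator tends to $2$ while $D\lambda_r(u_\nu)=\mathcal{O}(\varepsilon_s)\to0$, the curvature of $\kappa_\nu$ at $\lambda_\nu$ tends to infinity, which is the remaining claim.
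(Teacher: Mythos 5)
Your overall strategy coincides with the paper's: solve \eref{eq:bif_eqn_1-hom} as a perturbation of the limit parabola \eref{eq:bif_eqn_cycles}, locate the unique critical point of $\lambda_l(\cdot)$ from the $u^2$ term, read off $D\lambda_r(u_\nu)\neq 0$ from the leading coefficient of the exponentially small term in the first component, and obtain the curvature blow-up from $D^2\lambda_l(u_\nu)/(D\lambda_r(u_\nu))^2$ with $D^2\lambda_l\to 2$ and $D\lambda_r=\mathcal{O}(|\mu^s|^\nu)$. Those parts are correct and are essentially what the paper does (it phrases the solution of the bifurcation equation via contraction arguments rather than two nested applications of the implicit function theorem, but that is cosmetic).

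The genuine gap is exactly the step you flag as "the crux": the nonvanishing $\partial_u c^s_r(0,0)\neq 0$, i.e.\ \eref{eq:der_c^s_r}. You propose to get it by "inserting the $n=3$-specific facts used in the proof of corollary~\ref{cor:jump_a}", namely $(\id-P)^Tz=z$ and the non-orthogonality of the eigenvectors $\eta^\pm$ and $\eta^{u/s}$. But those facts establish only that the \emph{value} $c^s_r(0,0)$ is nonzero; they say nothing about its $u$-derivative, and a function can be nonzero at a point while being constant (or critical) in $u$ there. The paper's proof of \eref{eq:der_c^s_r} occupies most of its argument and needs two further inputs that your sketch does not supply. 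First, since $\dim U_r=0$, the entire $u$-dependence of $c^s_r(u,\lambda)=\langle\eta^-,\eta^s(u,\lambda)\rangle\cdot\langle\ldots\rangle$ sits in $\eta^s(u,\lambda)=\lim_{n\to\infty}(D_1\Pi(0,\lambda))^{-n}q_d^+(u,\lambda)(n)$, and one must show $D_1\eta^s(0,0)\neq 0$; this follows from $D_1q_d^+(0,0)(0)\neq 0$, which in turn uses that the transition map $\phi^{\Omega^+}(0)$ from $\Sigma_l$ to $\Sigma_\gamma$ is a diffeomorphism restricted to $U$. Second, and this is where $n=3$ enters beyond making hypotheses~\ref{hyp:approach_q} and \ref{hyp:approach_b_perp} automatic: $\eta^s(u,0)$ lies in $T_{\tilde p}W^s_\Pi(\tilde p)$, which is \emph{one-dimensional}, so $D_1\eta^s(0,0)$ is forced to be parallel to $\eta^s(0,0)$, and only then does the non-orthogonality $\langle\eta^-,\eta^s\rangle\neq 0$ (the fact you cite) transfer to $\langle\eta^-,D_1\eta^s(0,0)\rangle\neq 0$. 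Without these two geometric observations the "linear-algebra computation" you defer to does not close, so the claim $D\lambda_r(u_\nu)\neq 0$ --- and with it the curvature statement --- remains unproven in your sketch.
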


This corollary gives an explanation of the shape of $h_1$ (in
figure~\ref{fig:bif_h1b} (a))  locally around the turning points.  It also
provides information about the exponential rates with which the turning points
accumulate to $c_1\cap t_0$, namely they are given by the stable Floquet multiplier in
the $\lambda_r$-direction, and by the unstable Floquet multiplier in the
$\lambda_l$-direction.
Similar numerical studies in~\cite{Champneys} reveal that this mechanism also occurs in
other systems.

A possible arrangement of curves $\kappa_\nu$ is displayed in
figure~\ref{fig:parabolas}. This picture verifies the shape of the curve $h_1$
in figure~\ref{fig:bif_h1b} near the  points (b) -- (e) analytically.
The dashed line in figure~\ref{fig:parabolas} (the codimension-one line of the
heteroclinic orbit $q_l$) corresponds to the upper curve $t_0$ in
figure~\ref{fig:bif_h1b}. Note that the vertices of the curves
$\kappa_\nu$ are actually not located on this line. The fact that in
figure~\ref{fig:bif_h1b} the points (b) -- (e) are seemingly on $t_0$
is due to the large absolute value of the unstable Floquet multiplier and the
resulting quick convergence to $t_0$.

\begin{figure}[ht]
  \begin{center} \includegraphics[scale=.8]{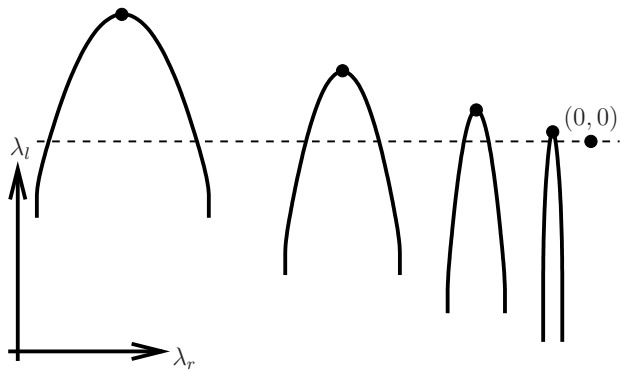}
  \caption{\label{fig:parabolas}The parabolas $\Xi(\nu,u,\lambda) = 0$ for
  increasing values of $\nu$ in the $(\lambda_r,\lambda_l)$-plane.  The vertices of
  the parabolas approach $(0,0)$; the order of the displacement in the $\lambda_r$-direction is
  given by $(\mu^s)^{\nu}$ and the order of the displacement in
  the $\lambda_l$-direction is given by $(\mu^u)^{-\nu}$.}
  \end{center}
\end{figure} 

\begin{proof}
 Consider the bifurcation equation \eref{eq:bif_eqn_1-hom}. First we note that
 the derivatives of $\xi_{l/r}$ with respect to $u$ and $\lambda$ admit the same
 estimate as given in corollary~\ref{cor:jump} for $\xi_l$; we refer to
 \cite{Knobloch2004} or \cite{Sandstede1993} for similar assertions including
 proofs.  So, using contraction arguments, we can solve $\Xi(\nu,u,\lambda)=0$
 for $\kappa_\nu:=\lambda(u,\nu)$ for sufficiently large $\nu$.  With
\begin{equation}\label{eq:der_c^s_r}
 D_1c^s_r(0,0)\not=0
\end{equation}
follows the existence of vertices of $\kappa_\nu$. Again using
\eref{eq:der_c^s_r} we can write $\kappa_\nu$ as
$\lambda_l=\lambda_l(\lambda_r)$.  From that representation one easily reads off
the assertion concerning the curvature.

It remains to verify \eref{eq:der_c^s_r}. Analogously to \eref{eq:def_c_l^u}, we find
\begin{equation}\label{eq:def_c_r^s} 
 c_r^s(u,\lambda)=c^s(b,u,\lambda)\langle\Phi_r^-(0,-\omega^-)^T(\id-P_r(\lambda)(0))^Tz,\hat b_\perp^-\rangle.
\end{equation}
Note that in the present context ($n=3$) the scalar product on the right-hand
side of \eref{eq:def_c_r^s} is different from zero, and the quantities within
the scalar product do not depend on $u$ ($\dim U_r=0$). To verify that for $\hat
b_\perp^-$ recall the definition \eref{eq:def_b_perp} of $b_\perp^-$. Note that
$Q^-$ here does not depend on $u$ ($\dim U_r=0$) and $\dim \im
(\id-q^-(\lambda))=1$ ($n=3$). Since $\hat b_\perp^-=b_\perp^-/\abs{b_\perp^-}$
we get that $\hat b_\perp^-$ does depend neither on $u$ nor on $b$.

So $D_1c_r^s(0,0)$ is different from zero if and only if $D_uc^s(0,0,0)\not=0$.
Similarly to \eref{eq:def_c^u}, we have $
c^s(b,u,\lambda)=\langle\eta^-(b,u,\lambda),\eta^s(u,\lambda)\rangle.  $
Actually, here $\eta^-$ does not depend on $(b,u)$. Further, note that $\eta^-$
is related to the asymptotics of $\Psi^-(0,-\nu^-)(\id-Q^-(0))^T\hat b_\perp^-$
(see in the proof of lemma~\ref{lem:lambdalemma_b}) and none of these terms
depends on $u$.  So $c^s=c^s(u,\lambda)$, and 
\begin{equation}\label{eq:der_c^s}
 D_1c^s(0,0)=\langle\eta^-(0),D_1\eta^s(0,0)\rangle.
\end{equation}
Roughly speaking, $\eta^s(u,\lambda)$ is related to the asymptotics of
$q_d^+(u,\lambda)(\cdot)$; see again the proof of
lemma~\ref{lem:lambdalemma_b}.

First we make clear that $D_1\eta^s(0,0)$ is different from zero. For this we
assume that the traces of $W^s(P)$ in both $\Sigma_l$ and $\Sigma_P$ are flat
(this can always be achieved by appropriate transformations). Then
$W^s(P)\cap\Sigma_l$ coincides with $U$, and
$q^+_d(u,0)(0)=\phi^{\Omega^+}(0)(u)$; compare hypothesis~\ref{hyp:scaling}.
Actually, this mapping can be considered as a mapping $\R\to\R$. Since
$\phi^{\Omega^+}(0)(\cdot)$ is a diffeomorphism ($\R^2\to\R^2$) also the above
considered restriction to $U$ is a diffeomorphism, and 
\[
D_1q^+_d(0,0)(0)\not=0.  
\] 
In the present context we have, see
\cite{Knobloch2004}, 
\[
\eta^s(u,\lambda)=\lim_{n\to\infty}(D_1\Pi(0,\lambda))^{-n}q^+_d(u,\lambda)(n).
\] 
From that representation we conclude that with $D_1q^+_d(0,0)(0)\not=0$ also
$D_1\eta^s(0,0)\not=0$.

Finally note that $\eta^s(u,0)\in T_{\tilde{p}}W_\Pi^s(\tilde{p})=W^s(P)\cap\Sigma_P$
(see above, $\tilde{p}$ denotes the $\Pi$-equilibrium $P\cap\Sigma_P$), which is
one-dimensional. Hence $D_1\eta^s(0,0)$ points in the same direction as
$\eta^s(0,0)$ and, because of \eref{eq:der_c^s}, we have $D_1c^s(0,0)\not=0$ (see
also the justification of \eref{eq:def_c^u}). Therefore \eref{eq:der_c^s_r}
holds.
\end{proof}

\section{Conclusions and outlook}

We adapted Lin's method to heteroclinic chains involving periodic orbits. The
main emphasis was on the coupling of two short Lin orbits near a periodic
orbit. In this way, we also achieved estimates of the jump functions (Lin
gaps), which are essential for detecting actual orbits near the primary chain
among the Lin orbits.

We employed our results to study 1-homoclinic orbits to the equilibrium near a
given EtoP cycle.  In particular we gave an analytical justification of some
local phenomena in the course of the (global) snaking behavior of the
continuation curve of $1$-homoclinic orbits.

A complete analytical description of the snaking behavior is still an open
problem; a global assumption on the behavior of the stable and unstable
manifolds of $E$ and $P$, similar to that used in~\cite{BKLSW2008}, is necessary
for such an analysis. In~\cite{Champneys} such an assumption has been used for a
geometric explanation of the mentioned global snaking phenomenon. However, also
these considerations are bound to $\R^3$.

Another interesting point is the more complete description of the dynamics near
an EtoP cycle, such as existence of $N$-homoclinic (or $N$-heteroclinic) orbits
to $E$ or $P$, periodic orbits, or shift dynamics.

\ack

The authors thank Bernd Krauskopf for helpful discussions and comments.  TR
acknowledges the financial support and hospitality of the Bristol Centre for Applied
Nonlinear Mathematics at the Department of Engineering Mathematics, University
of Bristol,
during several long research visits and during a 6-month research position in
2008.

\section*{References}


\end{document}